\theoremstyle{plain}
\newtheorem{theorem}{Theorem}[section]
\newtheorem{lemma}[theorem]{Lemma}
\newtheorem{prop}[theorem]{Proposition}
\newtheorem{corollary}[theorem]{Corollary}
\newtheorem{assumption}[theorem]{Assumption}
\theoremstyle{definition}
\newtheorem{remark}[theorem]{Remark}
\newtheorem{definition}[theorem]{Definition}
\numberwithin{equation}{section}
\def\be{\begin{equation}}
\def\ee{\end{equation}}
\begin{document}

\title[Asymptotic behavior of complete conformal metric]
{\textbf{Asymptotic behavior of complete conformal metric near singular boundary}}
\author[Shen]{Weiming Shen}
\address{School of Mathematical Sciences\\
Capital Normal University\\
Beijing, 100048, China}
\email{wmshen@aliyun.com}
\author[Wang]{Yue Wang}
\address{School of Mathematical Sciences\\
Capital Normal University\\
Beijing, 100048, China}
\email{yuewang37@aliyun.com}

\begin{abstract}
 The boundary behavior of the singular Yamabe problem has been extensively studied near sufficiently smooth boundaries, while less is known about the asymptotic behavior of solutions near singular boundaries. In this paper, we study the asymptotic behaviors of solutions to the singular Yamabe problem with negative constant scalar curvature near singular boundaries and derive the optimal estimates for the background metric which is not necessarily conformally flat. In particular, we prove that the solutions are well approximated by the solutions in tangent cones at singular points on the boundaries.
\end{abstract}

\thanks{The first author acknowledges the support of NSFC Grant 12371208.
The second author acknowledges the support of NSFC Grant 12371236.}
\maketitle
\section{Introduction}\label{sec-Intro}

Let $(M,g)$ be a smooth Riemannian manifold of dimension $n$, for some $n\ge 3$, which is either compact
without boundary or noncompact and complete. Assume $\Omega\subset M$ is a smooth domain.
We consider the following problem:
\begin{align}
\label{eq-MEq1} \Delta_{g} u -\frac{n-2}{4(n-1)}S_gu&= \frac14n(n-2) u^{\frac{n+2}{n-2}} \quad\text{in }\,\Omega,\\
\label{eq-MBoundary2}u&=+\infty\quad\text{on }\partial \Omega ,
\end{align}
where $S_g$ is the scalar curvature of $(M,g)$. Geometrically, if $u$ is a positive solution of \eqref{eq-MEq1}-\eqref{eq-MBoundary2},
the singular Yamabe metric $u^{\frac{4}{n-2}}g$ is a complete metric with a constant scalar curvature $-n(n-1)$ on $\Omega$.
According to Loewner and Nirenberg \cite{Loewner&Nirenberg1974} for $(M,g)=(S^n, g_{S^n})$ and
Aviles and McOwen \cite{AM1988DUKE} for the general case,
\eqref{eq-MEq1}-\eqref{eq-MBoundary2} admits a unique positive solution when $\Omega$ is a bounded smooth domain with an $(n-1)$-dimensional boundary.

If $(M,g)=(\mathbb R^n, g_E)$,
then \eqref{eq-MEq1}-\eqref{eq-MBoundary2} reduces to
\begin{align}
\label{eq-MainEq} \Delta u  &= \frac14n(n-2) u^{\frac{n+2}{n-2}} \quad\text{in }\,\Omega,\\
\label{eq-MainBoundary}u&=+\infty\quad\text{on }\partial \Omega.
\end{align}
When $\Omega=\mathbb R^n_{+}:=\{x \in \mathbb R^n|x_n>0\}$, the unique positive solution of \eqref{eq-MainEq}-\eqref{eq-MainBoundary}
is given by $u=x_n^{-\frac{n-2}{2}}$. In this case, $x_n$ equals the distance of $x$ to $\partial \Omega$ with respect to the Euclidean metric.

The study of boundary behaviors of \eqref{eq-MEq1}-\eqref{eq-MBoundary2} and \eqref{eq-MainEq}-\eqref{eq-MainBoundary} has a rich history.
To name a few,
Loewner and Nirenberg \cite{Loewner&Nirenberg1974} first studied asymptotic behaviors
of solutions of \eqref{eq-MainEq}-\eqref{eq-MainBoundary} and proved
an estimate involving leading terms.
For bounded $C^2$ domains $\Omega$, let $d$ be the distance function to $\partial\Omega$
and $u$ be a solution of \eqref{eq-MainEq}-\eqref{eq-MainBoundary}.
Loewner and Nirenberg \cite{Loewner&Nirenberg1974} proved, for $d$ sufficiently small,
\begin{equation}\label{eq-EstimateDegree1}|d^{\frac{n-2}{2}}u-1|\le Cd, \end{equation}
where $C$ is a positive constant depending only on certain geometric quantities of $\partial\Omega$.
When $\Omega$ has a smooth boundary, an
estimate of solutions of \eqref{eq-MEq1}-\eqref{eq-MBoundary2} up to an arbitrarily finite order was established by
Andersson, Chru\'sciel and Friedrich \cite{ACF1982CMP} and Mazzeo \cite{Mazzeo1991}. In particular,
\begin{equation*}u=d^{-\frac{n-2}{2}}\Big[1+\frac{n-2}{4(n-1)}H_{\partial\Omega}d+O(d^2)\Big],\end{equation*}
where $d$ is the distance to $\partial\Omega$ and $H_{\partial\Omega}$ is the mean curvature of $\partial\Omega$
with respect to the interior
unit normal vector of $\partial\Omega$. Hence, the estimate
\eqref{eq-EstimateDegree1} still holds for solutions of \eqref{eq-MEq1}-\eqref{eq-MBoundary2} when the distance function to $\partial\Omega$ with respect to the metric $g$ is sufficiently small.

The singular Yamabe problem has many geometric applications in recent years, see \cite{CMY2022}, \cite{ChenLaiWang2019}, \cite{Graham2017}, \cite{GrahamGursky2021} and \cite{GurskyHan2017}.  The boundary behaviors of solutions to \eqref{eq-MEq1}-\eqref{eq-MBoundary2}  play an important role in these works.

 Next, we review the results on the boundaries with singularities. Compared to the case where the boundary is sufficiently smooth, there are few results on the asymptotic behavior of solutions in domains with singular boundaries.
 In \cite{hanshen2}, Han and the first author studied the asymptotic behaviors of solutions of \eqref{eq-MainEq}-\eqref{eq-MainBoundary}
in singular domains, and proved that these solutions are well approximated by the
corresponding solutions in tangent cones near singular points on the boundaries.
In \cite{HanJiangShen}, Han, Jiang and the first author studied asymptotic behaviors of solutions to \eqref{eq-MainEq}-\eqref{eq-MainBoundary}
in finite cones and established the optimal
asymptotic expansions in terms of the corresponding solutions in infinite cones.
All these
results are established in domains in $(\mathbb R^n, g_E)$.

In this paper, we will study the
asymptotic behaviors of local positive solutions of \eqref{eq-MEq1}-\eqref{eq-MBoundary2}
near singular points on $\partial\Omega$, when the background metric is not necessarily conformally flat.
For convenience, we formulate the domains and the background metrics in the following assumption.
\begin{assumption}\label{assumption-basic} Let $g$ be a Riemannian metric on $B_{2}(0)\subseteq\mathbb R^n$ where $n\geq 3$,
 and $(x_1,\cdots,x_n)$ be a normal coordinate system of $g$ with $0$ as the origin. Let $\Omega \subseteq B_{2}(0) $ be a domain with
$0\in \partial \Omega$ and  $\partial\Omega \bigcap \overline{B_{1}(0)} $ being Lipschitz.
\end{assumption}

 Now we consider a local positive solution of \eqref{eq-MEq1}-\eqref{eq-MBoundary2}, namely, $u>0$ in $\Omega\bigcap B_{1}(0)$ satisfies
\begin{align}
\label{eq-MEq} \Delta_{g} u -\frac{n-2}{4(n-1)}S_gu&= \frac14n(n-2) u^{\frac{n+2}{n-2}} \quad\text{in }\,\Omega\bigcap B_{1}(0),\\
\label{eq-MBoundary}u&=+\infty\quad\text{on }\partial \Omega \bigcap B_{1}(0),
\end{align}
where $S_g$ is the scalar curvature of $g$. Our main interest in this paper will be to describe the asymptotic behavior of such a local positive solution near singular boundary.

Our main result in this paper is given by the following theorem.

\begin{theorem}\label{main reslut}
Let $\Omega$ and $g$ be as in Assumption \ref{assumption-basic},
 and, for some integer $k\le n$,  let $\partial\Omega$ in $B_1(0)$ consist of
$k$ $C^{2}$-hypersurfaces $S_1, \cdots, S_k$ intersecting at $0$ with the property that the normal vectors
of $S_1, \cdots, S_k$ at $0$ are linearly independent. Suppose $ u \in
C^{\infty}(\Omega\bigcap B_{1}(0))$ is a positive solution of \eqref{eq-MEq}-\eqref{eq-MBoundary},
and $u_{V_{0}}$ is the solution of \eqref{eq-MainEq}-\eqref{eq-MainBoundary} in the tangent cone $V_{0}$
of $\Omega$ at $0$.
Then, there exists a constant $r\leq \frac{1}{2}$, and a 
$C^{2}$-diffeomorphism $T_{g}$:
$B_r (0)\rightarrow T_g(B_r(0))\subseteq\mathbb R^n$, with
$T_g(\Omega \bigcap B_r (0))= V_{0}\bigcap T_g(B_r(0))$ and
$T_g(\partial\Omega \bigcap B_r (0))= \partial V_{0}\bigcap T_g(B_r(0))$, such that,
for any $x\in \Omega \bigcap B_{r/2}(0)$,
\begin{align}
\label{main-estimate} \Big|\frac{u(x)}{u_{V_{0}}(T_gx)}-1\Big|\leq Cd_{g}(x,0),
\end{align}
where $d_{g}(x,0)$ is the distance from $x$ to $0$  with respect to the metric $g$ and $C$ is a positive constant depending only on $n$, $g$
and the geometry of $\partial\Omega$ near $0$.
\end{theorem}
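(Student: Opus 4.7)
My plan is a barrier--comparison argument modeled on the Euclidean case of \cite{hanshen2}, the main new work being to absorb the errors produced by the non-flat background metric $g$. First I build a $C^{2}$-diffeomorphism $T_g:B_r(0)\to T_g(B_r(0))$ that straightens each $C^{2}$-face $S_i$ of $\partial\Omega$ to its tangent hyperplane $H_i$ at $0$. Because the $S_i$ are $C^{2}$ with linearly independent normals at $0$, each $S_i\cap B_r(0)$ is the graph of a $C^{2}$ function over $H_i$ vanishing to second order at $0$; combining these graph corrections (for instance iteratively, in coordinates adapted to the $H_i$) produces $T_g$ with $T_g(0)=0$, $DT_g(0)=I$, $T_g(\Omega\cap B_r(0))=V_0\cap T_g(B_r(0))$, and $T_g(x)=x+O(|x|^2)$.

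Setting $v(y)=u(T_g^{-1}(y))$ and $\hat g=(T_g^{-1})^*g$, the function $v$ solves the singular Yamabe equation for $\hat g$ on $V_0\cap T_g(B_r(0))$ with $v=+\infty$ on $\partial V_0\cap T_g(B_r(0))$. Because $g$ is written in normal coordinates (so $g_{ij}(x)=\delta_{ij}+O(|x|^2)$) and $T_g(x)=x+O(|x|^2)$, the pulled-back metric satisfies $\hat g_{ij}(y)=\delta_{ij}+O(|y|^2)$, $|\Gamma_{\hat g}|=O(|y|)$, and $|S_{\hat g}|=O(1)$. Combining these with the homogeneity of $u_{V_0}$ (degree $-(n-2)/2$) and the standard derivative estimates $|D^\alpha u_{V_0}(y)|\lesssim \rho(y)^{-(n-2)/2-|\alpha|}$, where $\rho(y)=\mathrm{dist}(y,\partial V_0)\le |y|$, I estimate the residual
\begin{equation*}
R(u_{V_0}):=\Delta_{\hat g}u_{V_0}-\tfrac{n-2}{4(n-1)}S_{\hat g}u_{V_0}-\tfrac{n(n-2)}{4}u_{V_0}^{(n+2)/(n-2)}=(\Delta_{\hat g}-\Delta)u_{V_0}-\tfrac{n-2}{4(n-1)}S_{\hat g}u_{V_0}
\end{equation*}
by $|R(u_{V_0})|\lesssim |y|^2\,u_{V_0}^{(n+2)/(n-2)}$.

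For the barriers I take $\eta_\pm(y)=1\pm C|y|$ and $w_\pm=u_{V_0}\,\eta_\pm$; a direct expansion gives
\begin{equation*}
R(w_\pm)=\eta_\pm R(u_{V_0})+u_{V_0}\Delta_{\hat g}\eta_\pm+2\langle\nabla u_{V_0},\nabla\eta_\pm\rangle_{\hat g}+\tfrac{n(n-2)}{4}u_{V_0}^{(n+2)/(n-2)}\bigl(\eta_\pm-\eta_\pm^{(n+2)/(n-2)}\bigr).
\end{equation*}
Writing $u_{V_0}(t\omega)=t^{-(n-2)/2}F(\omega)$ in polar coordinates, the Euclidean versions of the last three terms combine to $\mp C\bigl[nF^{(n+2)/(n-2)}-F\bigr]t^{-n/2}$ at leading order; this has the correct sign and magnitude $\gtrsim C|y|\,u_{V_0}^{(n+2)/(n-2)}$ because $V_0$ lies in a half-space and therefore $F\ge 1$ (by comparison with the Loewner--Nirenberg solution on a half-space). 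The metric corrections to the Laplacian and gradient contribute only $O(|y|^2)u_{V_0}^{(n+2)/(n-2)}$, which together with the residual term is absorbed once $C$ is large and $|y|$ is small, giving $R(w_+)\le 0\le R(w_-)$ on $V_0\cap B_{r/2}(0)$. On $\partial V_0\cap T_g(B_{r/2}(0))$ all three of $w_-,v,w_+$ blow up with matching leading rate $\rho^{-(n-2)/2}$; on the outer boundary $V_0\cap\partial B_{r/2}(0)$ the inequality $w_-\le v\le w_+$ is enforced by enlarging $C$, using the smooth-boundary asymptotics of \cite{ACF1982CMP,Mazzeo1991} along $\partial V_0\setminus\{0\}$. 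The comparison principle for the semilinear equation with monotone nonlinearity then yields $w_-\le v\le w_+$ throughout, which translates to the asserted estimate after undoing $T_g$.

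The hardest step is closing the barrier inequality uniformly in the anisotropic regime $\rho(y)\ll |y|$, where $u_{V_0}$ and its derivatives blow up at different rates normal and tangential to the faces of $V_0$. It works only because the metric perturbation is quadratic, $\hat g_{ij}-\delta_{ij}=O(|y|^2)$, so one extra factor of $|y|$ beyond the barrier correction $C|y|$ is always available; this is precisely why the normal-coordinate hypothesis is imposed. A secondary but essential technicality is the outer-boundary inequality, handled by the classical smooth-boundary asymptotics away from the vertex.
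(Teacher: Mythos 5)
There is a genuine gap at the heart of your barrier computation. Your good term is, in the notation of the paper, $-C\bigl[nF^{\frac{n+2}{n-2}}-F\bigr]t^{-n/2}=-C\,t\,(n-\rho^{2})\,u_{V_0}^{\frac{n+2}{n-2}}$ with $\rho=F^{-2/(n-2)}$, so it has the claimed sign and size only where $\rho<\sqrt{n}$, i.e.\ $F>n^{-(n-2)/4}$ uniformly on $\Sigma=V_0\cap\mathbb S^{n-1}$. Your justification ("$V_0$ lies in a half-space, hence $F\ge 1$") is false once $k\ge 2$: by \eqref{bdylocalrep} the tangent cone is only sandwiched between $\bigcap_i\{d_{P_i}>0\}$ and $\bigcup_i\{d_{P_i}>0\}$, and e.g.\ the complement of a thin wedge or of a thin simplicial cone is admissible and is contained in no half-space. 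Worse, the needed bound itself can fail: for $n\ge 4$, as the omitted convex cone degenerates to a ray the solution $g$ tends to $0$ away from the deleted set (cf.\ Proposition \ref{eigen-esti2}), so $\rho\to\infty$ there and $n-\rho^{2}<0$; for $n=3$ the complement of a thin wedge has $\rho\to\sqrt3$ at the point farthest from the edge (compare with the explicit solution $3^{-1/4}s^{-1/2}$ in $\mathbb R^3\setminus\ell$), so the coefficient degenerates. Thus $u_{V_0}(1\pm C|y|)$ is simply not a super/subsolution on all of $V_0\cap B_r$, no matter how large $C$ is. This is exactly the obstruction the paper is built around: its barriers carry the extra terms $A_1r^{\alpha}+A_2r^{\alpha}\phi_1$, where $\phi_1$ is the first eigenfunction of $L_\Sigma=-\Delta_\theta+\tfrac{n(n+2)}{4\rho^2}$, which supply negativity of size $(\alpha(\alpha+n-2)-\lambda_1)A_2\phi_1$ precisely in the region where $\rho$ is not small; making this work requires the spectral lower bound $\lambda_1(L_\Sigma)>\tfrac34$ for $n=3$ (Proposition \ref{eigen-esti}), whose proof is itself nontrivial. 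Your proposal contains no substitute for this mechanism.

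Two further points. First, the outer-boundary inequality $w_-\le v\le w_+$ on $V_0\cap\partial B_{r/2}$ cannot be "enforced by enlarging $C$" via the smooth-boundary asymptotics of \cite{ACF1982CMP,Mazzeo1991}: those constants degenerate along the edges where the faces of $V_0$ meet, and these edges intersect $\partial B_{r/2}$. The paper instead first proves a suboptimal comparison valid up to the edges (Lemma \ref{A}, then Theorem \ref{C} giving rate $|x|^{1/2}$ for $n=3$) and uses it to calibrate $A_0$ so that the barrier dominates on $\partial B_{r_0}$; some such a priori estimate is indispensable in your scheme as well. Second, your claim $\hat g_{ij}(y)=\delta_{ij}+O(|y|^2)$ is incorrect: $T_g(x)=x+O(|x|^2)$ gives only $DT_g=I+O(|x|)$, hence $\hat g-\delta=O(|y|)$ (the first-order deviation carries the second fundamental forms of the $S_i$). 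The residual is then $O(|y|)u_{V_0}^{(n+2)/(n-2)}$, of the same order as your good term rather than one order better, so the "one extra factor of $|y|$" you rely on to close the anisotropic regime is not there; absorption is still possible by taking $C$ large, but only if the coefficient $n-\rho^{2}$ is uniformly positive, which returns you to the main gap above.
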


The map $T_g$ and concept of tangent cones are introduced in Section \ref{sec-pre}. $T_g$ is determined
by $d_{g,i}$, $i=1,...,k$, where $d_{g,i}$ is the signed distance of $x$ to $S_i$  with respect to the metric $g$.
In fact, $u_{V_{0}}(T_gx)$ can be expressed by a function $f_{V_{0}}(d_{g,1}(x),...,d_{g,k}(x))$ which does not depend on the choice of coordinate system and $f_{V_{0}}$ depends only on $V_0$.
When $k=1$, $V_{0}$ is conjugate to $\mathbb R^n_{+}$ and  $f_{V_{0}}(d_{g,1}(x))=d_{g,1}^{-\frac{n-2}{2}}(x)$.
The constant $C$ depends on the geometry of $\partial\Omega$ and $g$ which will be specified in the proof of Theorem \ref{main reslut}.
The estimate \eqref{main-estimate} generalizes \eqref{eq-EstimateDegree1} and the power one of the
distance on the right-hand side cannot be improved in general if for some $i,$ $S_i$ is curved in general.

Geometrically, this theorem asserts that we can approximate the original singular Yamabe metric near the singularity with a corresponding singular Yamabe metric defined in the tangent cone obtained by blowing up at the singularity. Compared with the original metric, the singular Yamabe metric in the tangent cone has more symmetry and is simpler in structure.

In particular, when $\partial \Omega$ near the origin is a cone with its vertex at the origin,  we can improve the estimate \eqref{main-estimate}.

\begin{theorem}\label{main reslut2}
Let $\Omega$ and $g$ be as in Assumption \ref{assumption-basic},
 and $V_0$ be a Lipschitz infinite cone with its vertex at the origin. Suppose $\Omega\bigcap B_{1}(0)=V_0\bigcap B_{1}(0)$,  $u\in C^{\infty}(\Omega\bigcap B_{1}(0))$ is a positive solution of \eqref{eq-MEq}-\eqref{eq-MBoundary},
and $u_{V_{0}}$ is the solution of \eqref{eq-MainEq}-\eqref{eq-MainBoundary} in the cone $V_{0}$.
Then there exists a constant $\alpha>1$ and a positive constant $r$, such that,
for any $x\in  \Omega \bigcap B_{r/2}(0)$,
\begin{align}
\label{main-estimate2} \Big|\frac{u(x)}{u_{V_{0}}(x)}-1\Big|\leq Cd_{g}^{\alpha}(x,0),
\end{align}
where
 $C$ is a positive constant depending only on $n$, $g$
and the geometry of $\partial\Omega$ near $0$.
\end{theorem}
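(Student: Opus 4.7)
The plan is a barrier/comparison argument exploiting the fact that, unlike in Theorem~\ref{main reslut}, the hypothesis $\Omega\cap B_{1}(0)=V_{0}\cap B_{1}(0)$ removes all geometric mismatch between $\Omega$ and its tangent cone: the only discrepancy between \eqref{eq-MEq}-\eqref{eq-MBoundary} on $\Omega$ and the Euclidean model \eqref{eq-MainEq}-\eqref{eq-MainBoundary} on $V_{0}$ is the deviation of $g$ from $g_{E}$. Since $(x_{1},\ldots,x_{n})$ are normal coordinates for $g$, one has $g_{ij}(x)=\delta_{ij}+O(|x|^{2})$, $S_{g}(x)=O(|x|^{2})$, and $d_{g}(x,0)=|x|$ by the Gauss lemma; the perturbation is quadratic in $|x|$, and the goal is to upgrade the linear bound of Theorem~\ref{main reslut} to a power $\alpha>1$ by absorbing this quadratic perturbation into radial barriers built around $u_{V_{0}}$.

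Concretely, fix $\alpha\in(1,\min\{\sqrt{n},2\})$ and consider
\[
\bar u^{\pm}(x):=(1\pm C|x|^{\alpha})u_{V_{0}}(x)\qquad\text{on}\ V_{0}\cap B_{r}(0),
\]
with $C>0$ large and $r>0$ small to be chosen. The central computation combines the Euler identity $x\cdot\nabla u_{V_{0}}=-\tfrac{n-2}{2}u_{V_{0}}$, coming from the homogeneity $u_{V_{0}}(\lambda x)=\lambda^{-(n-2)/2}u_{V_{0}}(x)$, with $\Delta(|x|^{\alpha})=\alpha(\alpha+n-2)|x|^{\alpha-2}$ to give the clean cancellation
\[
u_{V_{0}}\,\Delta(|x|^{\alpha})+2\nabla(|x|^{\alpha})\cdot\nabla u_{V_{0}}=\alpha^{2}|x|^{\alpha-2}u_{V_{0}}.
\]
Matching this against the nonlinear gain $nC|x|^{\alpha}u_{V_{0}}^{(n+2)/(n-2)}$ produced by the Taylor expansion of $(1\pm C|x|^{\alpha})^{(n+2)/(n-2)}$, the supersolution inequality for $\bar u^{+}$ (and the reversed subsolution inequality for $\bar u^{-}$) reduces at leading Euclidean order to the pointwise algebraic condition $\alpha^{2}<n|x|^{2}u_{V_{0}}^{4/(n-2)}=nf(\hat x)^{4/(n-2)}$, where $f(\hat x):=|x|^{(n-2)/2}u_{V_{0}}(x)$ is the angular profile on $\Sigma:=V_{0}\cap S^{n-1}$. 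A positive lower bound $f\ge c_{0}$ on $\Sigma$ (obtained from interior Harnack and comparison with an inscribed half-space model, which yields $f\ge 1$ in the convex case) makes any $\alpha^{2}<nc_{0}^{4/(n-2)}$ admissible. The residual error terms $(\Delta_{g}-\Delta)\bar u^{\pm}$, $(\Delta_{g}-\Delta)(|x|^{\alpha})u_{V_{0}}$, and $S_{g}\bar u^{\pm}$ each carry an additional factor of $|x|^{2}$ from $g-g_{E}=O(|x|^{2})$, so they are strictly lower order than the gain whenever $\alpha<2$, and can be absorbed by taking $r$ small.

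To close the argument I would apply the comparison principle of Aviles--McOwen \cite{AM1988DUKE} for the singular Yamabe equation on $V_{0}\cap B_{r}(0)$. On the outer sphere $V_{0}\cap\partial B_{r}$ the pointwise inequalities $\bar u^{-}\le u\le\bar u^{+}$ are arranged by enlarging $C$ to dominate the finite quantity $\|u/u_{V_{0}}-1\|_{L^{\infty}(V_{0}\cap\partial B_{r})}$. On the blow-up boundary $\partial V_{0}\cap B_{r}\setminus\{0\}$ the ratio $u/u_{V_{0}}$ tends to $1$ by applying \eqref{eq-EstimateDegree1} locally on each smooth piece of $\partial V_{0}$; at the vertex itself, Theorem~\ref{main reslut} delivers the same limit. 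Hence $(1-C|x|^{\alpha})u_{V_{0}}\le u\le(1+C|x|^{\alpha})u_{V_{0}}$ on $V_{0}\cap B_{r/2}(0)$, which is precisely \eqref{main-estimate2} once $d_{g}(x,0)=|x|$ is substituted.

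The principal obstacle is the uniform verification of the supersolution inequality both up to $\partial V_{0}$, where $u_{V_{0}}$ and its derivatives blow up at rates $d_{E}^{-(n-2)/2-k}$, and near the vertex, where the scale becomes degenerate; the scale invariance of $V_{0}$ handles the latter by reducing the check to a fixed annular shell, while the homogeneity-driven $\alpha^{2}$-vs-$n$ cancellation and the quadratic smallness of $g-g_{E}$ handle the former. A subtler point is securing $nc_{0}^{4/(n-2)}>1$ for general (possibly non-convex) Lipschitz cones: if the radial ansatz $C|x|^{\alpha}$ proves insufficient, one can refine to $C|x|^{\alpha}\phi(\hat x)$ with $\phi>0$ the first eigenfunction of the linearization of the Euclidean Loewner--Nirenberg equation on $\Sigma$, in which case the admissible $\alpha$ is the smallest positive indicial root of the linearization at the vertex of $V_{0}$, and showing $\alpha>1$ amounts to a sharp lower bound on the corresponding spherical eigenvalue.
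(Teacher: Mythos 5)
Your barrier computation is correct as far as it goes: with $u_{V_0}=|x|^{-\frac{n-2}{2}}g(\theta)$ and the Euler identity, the supersolution inequality for $(1+C|x|^{\alpha})u_{V_0}$ reduces at leading order to the pointwise condition $\alpha^{2}<n\,g(\theta)^{\frac{4}{n-2}}=n\rho^{-2}$ on $\Sigma=V_0\cap\mathbb S^{n-1}$. The genuine gap is that this condition fails for general Lipschitz cones. Your lower bound $g\ge 1$ is available only when $V_0$ sits inside a half-space (the convex case); for a ``large'' cone such as the complement of a thin cone, $g$ becomes uniformly small away from the removed ray (this is exactly the content of Proposition \ref{eigen-esti2} in the paper), so $n\rho^{-2}=ng^{4/(n-2)}$ drops below $1$ on an interior region of $\Sigma$ and no $\alpha>1$ — indeed no $\alpha>0$ — satisfies your pointwise inequality there. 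Your proposed fallback, replacing $C|x|^{\alpha}$ by $C|x|^{\alpha}\phi(\hat x)$ with $\phi$ the first eigenfunction of the linearized spherical operator and $\alpha$ the indicial root $\mu_1=\sqrt{(\tfrac{n-2}{2})^2+\lambda_1(L_\Sigma)}$, is precisely the paper's method (Theorem \ref{thm-cone}); but you explicitly defer the one fact that makes it work, namely $\mu_1>1$ for every Lipschitz $\Sigma$. For $n\ge 4$ this is automatic from $\lambda_1>0$, but for $n=3$ it is equivalent to $\lambda_1(L_\Sigma)>\tfrac34$, which is the main new analytic input of the paper (Proposition \ref{eigen-esti}, proved by contradiction against the sharp first-order expansion of \cite{hanshen2} applied to the complement cone). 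Without that bound the theorem's claim $\alpha>1$ is not established in dimension $3$.

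Two further points would need repair even granting the eigenvalue bound. First, a correction proportional to $\phi_1$ alone degenerates at the lateral boundary $\partial V_0$ (where $\phi_1\to 0$), so it cannot by itself absorb the $O(|x|^2)$ errors coming from $g_{ij}=\delta_{ij}+O(|x|^2)$, whose size near $\partial V_0$ is of order $u_{V_0}\rho^{-2}$; the paper's barrier therefore carries the additional pieces $A_0u_{V}r^{2}+A_1r^{\frac{6-n}{2}}$ whose Laplacians contribute the negative term $-\tfrac{n(n+2)}{4}\rho^{-2}(A_1r^{\frac{6-n}{2}}+\cdots)$ that dominates there, while the $\phi_1$-piece handles the interior of $\Sigma$. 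Second, your treatment of the comparison boundary data is too quick: on $V_0\cap\partial B_r$ both $u$ and $u_{V_0}$ blow up as one approaches $\partial V_0$, so the finiteness of $\|u/u_{V_0}-1\|_{L^{\infty}(V_0\cap\partial B_r)}$ is itself a nontrivial a priori estimate — the paper supplies it via Lemma \ref{A} (giving $|u-u_{V}|\le Bu_{V}|x|^{\frac{n-2}{2}}$), and handles the infinite lateral boundary values with the $\epsilon$-translation and the auxiliary supersolution $2u_R$, together with the restriction to the region where $u^{\epsilon}$ is large so that the zeroth-order coefficient has the right sign for the maximum principle.
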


The index $\alpha$ can be $2$ in Theorem \ref{main reslut2} when $n\geq6$. Also, $\alpha$ in Theorem \ref{main reslut2} can always be given by $\alpha=2$ when $V_0$ is convex, see Corollary \ref{345-convex cone}.

The main challenge is to derive the sharp estimates in Theorem \ref{main reslut} and Theorem \ref{main reslut2}.
 For example, if it is $Cd_{g}^{\alpha}(x,0)$ for some positive constant $\alpha$ sufficiently small   instead of $Cd_{g}(x,0)$ on the right hand side of \eqref{main-estimate}, then the proof can be greatly simplified and  a routine application of barrier techniques will be enough.  However,  deriving the sharp estimates in \eqref{main-estimate} is much more complicated and delicate.

We now describe the proof of Theorem \ref{main reslut}. Basically,
we view the equation \eqref{eq-MEq} as a perturbation of the equation \eqref{eq-MainEq}. Then we use the maximum principle to prove a local solution
of \eqref{eq-MEq1}-\eqref{eq-MBoundary2} is close to a local solution of \eqref{eq-MainEq}-\eqref{eq-MainBoundary} in the corresponding region.
If $\partial \Omega$ is at least $C^2$, then the distance function $d(x,\partial\Omega)$ is at least $C^2$ near $\partial\Omega$ when $x$ is sufficient close to $\partial\Omega$,
and hence can be used to construct
various barrier functions.
When $\partial \Omega$ is merely Lipschitz,
the distance function is not $C^2$ near $\partial\Omega$
anymore and cannot be used to construct barrier functions.
We will overcome this difficulty by carefully solving certain equations, then using these solutions to construct
various barrier functions. Especially, due to the growth rate of solutions of \eqref{eq-MainEq}-\eqref{eq-MainBoundary}  near boundaries in low dimensions, we need to derive certain a priori estimates. For this purpose, we  study the eigenvalue problem of a specific
elliptic operator on spherical domains whose coefficients are singular on the boundary and estimate the lower bound of the first eigenvalue. The first eigenvalue and eigenfunction play an essential role in the derivation of
the optimal estimates.
Then, we compare a solution of \eqref{eq-MainEq}-\eqref{eq-MainBoundary} in the curved domain with
corresponding solutions in the  tangent cones. The properties of the $C^{2}$-diffeomorphism $T_{g}$ will be used in such comparison.

We now compare results in this paper with corresponding results for the singular Yamabe equations
corresponding to positive scalar curvatures. First consider the positive solutions of the Yamabe equation of the form
\begin{equation}\label{eq-Yamabe}
\Delta u=-\frac14n(n-2)u^{\frac{n+2}{n-2}}, \quad\text{in }B_1(0)\setminus \{0\}.\end{equation}
Let $u$ be a positive solution of \eqref{eq-Yamabe} in $B_1(0)\setminus \{0\}$,
with a nonremovable singularity at the origin.
Based on the Alexandrov reflection method, Caffarelli, Gidas, and Spruck \cite{CaffarelliGS1989}
first proved that $u$ is asymptotic to a radial singular solution of \eqref{eq-Yamabe} in $\mathbb R^n\setminus\{0\}$.
Subsequently in \cite{KorevaarMPS1999}, Korevaar, Mazzeo, Pacard and Schoen
gave a simplified proof and expanded solutions to the next order. Rencently, Han, Li and Li \cite{Han-Li-Li} established an expansion of $u$ up to arbitrary orders.

When the background metric is not necessarily conformally flat, let $g$ be a smooth Riemannian metric on $B_1(0)\subseteq\mathbb R^n$. Consider
\begin{equation}
\label{eq-MEq5} \Delta_{g} u -\frac{n-2}{4(n-1)}S_gu=-\frac14n(n-2) u^{\frac{n+2}{n-2}} \quad\text{in }B_1(0)\setminus \{0\},
\end{equation}
where $ S_{g}$ is the scalar curvature of the metric $g$.
Let $u$ be a positive solution of \eqref{eq-MEq5} in $B_1(0)\setminus \{0\}$,
with a nonremovable singularity at the origin. In \cite{Marques2008}, Marques studied the asymptotic behavior of $u$, and proved that $u$ is still asymptotic to a radial singular solution of \eqref{eq-Yamabe} in $\mathbb R^n\setminus\{0\}$ when $3\leq n \leq 5$. Recently, the same result was proved for $n=6$ by Xiong and Zhang \cite{XZ} and for $7\leq n\leq24$ by Han, Xiong and Zhang \cite{HXZ}.
More precisely, when $3\leq n \leq 24$, it was proved that
\begin{equation}\label{eq-estimate-u-0}
|\frac{u(x)}{u_0(x)}-1|=|x|^{\alpha}\quad\text{as }x\to 0,\end{equation}
where $\alpha>0$ is some positive constant and $u_0(x)$ is a positive radial solution of \eqref{eq-Yamabe}
in $\mathbb R^n\setminus\{0\}$, with a nonremovable singularity at the origin.

Compare \eqref{main-estimate} and \eqref{main-estimate2}
with \eqref{eq-estimate-u-0}.

The paper is organized as follows.
In Section \ref{sec-pre}, we
introduce the concepts of Lipschitz domains locally bounded by finitely many
$C^{2}$-hypersurfaces and tangent cones at singular points on their boundaries.
In Section \ref{sec-Basic Estimates}, we study a class of second order elliptic partial differential operators and prove the existence and uniqueness of positive solutions for \eqref{eq-MEq1}-\eqref{eq-MBoundary2} in bounded Lipschitz domains in $(M,g)$.
In Section \ref{sec-eigenvalue}, we study the first eigenvalue and the corresponding eigenfunction of a specific
elliptic operator on spherical domains with coefficients singular on the boundary, which is used in Section \ref{sec-cone}-\ref{sec-domains locally bounded by hypersurfaces}.
In Section \ref{sec-cone},
we study asymptotic behaviors of positive solutions of \eqref{eq-MEq}-\eqref{eq-MBoundary} in domains which are locally Lipschitz cones and prove Theorem \ref{main reslut2}.
In Section \ref{sec-domains locally bounded by hypersurfaces}, we study the  asymptotic behaviors of positive solutions of \eqref{eq-MEq}-\eqref{eq-MBoundary} in domains which are locally bounded by $k$
linearly independent $C^2$ hypersurfaces and prove Theorem \ref{main reslut}.

\section{Preliminaries}\label{sec-pre}
In this section, we introduce the concepts of Lipschitz domains locally bounded by finitely many
$C^{2}$-hypersurfaces, tangent cones at singular points on their boundaries and  a $C^{2}$-diffeomorphism $T$.

\textit{First}, we discuss infinite cones and domains locally bounded by finite hypersurfaces. We emphasize
all cones in this paper are solid.

Let $V$ be an infinite cone in $\mathbb R ^{n}$ over a
Lipschitz domain $\Sigma\subsetneq\mathbb S^{n-1}$. In other words, $V\subsetneq \mathbb R ^{n}$ is an infinite cone with the origin being its vertex and $V\cap \mathbb{S}^{n-1}=\Sigma$ is a Lipschitz domain on $\mathbb S^{n-1}$.
According to \cite{HanJiangShen}, there exists a unique positive solution $u_V\in C^\infty(V)$ of \eqref{eq-MainEq}-\eqref{eq-MainBoundary} and
$u_V(x)=|x|^{-\frac{n-2}{2}} g(\theta)$, where $\theta\in \Sigma$ and $g$ satisfies
\begin{align}\label{eq-LN-spherical-domain-eq}
    \Delta_{\theta}g-\frac{1}{4}(n-2)^2g&=\frac{1}{4}n(n-2)g^{\frac{n+2}{n-2}}\quad\text{in } \Sigma,\\
\label{eq-LN-spherical-domain-boundary}   g&=\infty\quad\text{on }\partial \Sigma.
\end{align}

In particular, when $V=V_k \times \mathbb R^{n-k}\subsetneq\mathbb R^{n} $,
where $V_k$ is a cone over an appropriate domain $\Sigma_{k-1}\subsetneq \mathbb S^{k-1}$ in $\mathbb R^k$. Then, $u_V(x)$ has a form $|x'|^{-\frac{n-2}{2}}g_{k}(\theta)$,  where $\theta\in \Sigma_{k-1}$ and $|x'|=(x_1^2+...+x_k^2)^{\frac{1}{2}}$.

Here we note that if $V$ is an infinite Lipschitz cone in $\mathbb R ^{n}$ with vertex at $0$, then $\Sigma=V\bigcap \mathbb{S}^{n-1}$ is Lipschitz, but an infinite cone in $\mathbb R ^{n}$ over a
Lipschitz domain $\Sigma\subsetneq\mathbb S^{n-1}$ is not necessary Lipschitz. For example,  let $e_n$ be the north pole in $\mathbb S^{n-1}$ and $B_{r}^{g_{\mathbb S^{n-1}}}(e_n)$ be the ball on $\mathbb S^{n-1}$ with center  $e_n$, radius $r$. Let $\Sigma=B_{1}^{g_{\mathbb S^{n-1}}}(e_n)\setminus \overline{B_{1/2}^{g_{\mathbb S^{n-1}}}(e_n)}$ and $V$ be the infinite cone in $\mathbb R ^{n}$ over $\Sigma$. Then, $\Sigma$ is a Lipschitz domain on $\mathbb S^{n-1}$. However, $V$ is not Lipschitz near the origin.

\smallskip

We now give a definition of singular domains  locally bounded by finitely many $C^{2}$-hypersurfaces. For brevity, we focus on the case
when the boundary of the domain locally consists of
$k$ $C^{2}$-hypersurfaces intersecting at $0$ with the property that their  normal vectors
 at $0$ are linearly independent.  We adopt a definition basically the same as the definition in \cite{hanshen2}, cited as below.

\begin{definition}\label{def-Domain-lipschitz}
Let $S_1, \cdots, S_k$  be $k$ $C^{2}$-hypersurfaces passing $0$, with
the property that the normal vectors  of  $P_1, \cdots, P_k$, which are the tangent planes of
$S_1, \cdots, S_k$ at $0$, are mutually distinct. Let $\Omega$ be a bounded Lipschitz domain with
$0\in\partial\Omega$. Then, $\Omega \bigcap B_{R}(0)$ is called to be bounded by $S_1, \cdots, S_k$ in $B_{R}(0)$ for some $R>0$,
if,
$$\partial \Omega \bigcap B_{R}(0)\subseteq \bigcup_{i=1}^k  S_i.$$
We always assume $\overline{ \partial \Omega \bigcap B_{r}(0)\backslash S_i}\neq
\partial \Omega \bigcap \overline{B_{r}(0)}$, for any $r \leq R$ and any $i\in\{1,\cdots,k\}$. \end{definition}

Let $S_1, \cdots, S_k$, $P_1, \cdots, P_k$ and $\Omega$ be as in Definition \ref{def-Domain-lipschitz}. Then the tangent cone of $\Omega$ at $0$
is the solid infinite Lipschitz cone $V_0$ with the property:
$$\partial V_0\subseteq \bigcup_{i=1}^k  P_i,$$ and
$$\lim_{r\rightarrow 0}\frac{|\big[(\Omega\backslash V_0)\bigcup(V_0\backslash \Omega) \big]\bigcap B_r(0)  |}{r^n}=0.$$
The relation between these type of domains and their tangent cones can be described by satisfying a algebraic relation.
A detailed discussion of this characterization can be found in \cite{hanshen2}.

We now introduce the notation of signed distance. Choose a coordinate system such that  for some $R'\leq R$  and for each $i=1, \cdots, k$,
$S_i\cap B_{R'}(0)=\{x\in B_{R'}(0):\, x_n=f_i(x')\}$ for some $C^{2}$-function $f_i$ in $B_{R'}'(0')$. Then
\begin{equation}\label{bdylocalrep}\Omega\cap B_{R'}(0)=\{x\in B_{R'}(0)| x_n>f(x')\},\end{equation}
for some $f(x')\in \{f_1(x'), f_2(x'),\cdots,f_k(x') \}$.
Then,
$\partial \Omega\cap B_{R'}(0)=\{x\in B_{R'}(0):\, x_n=f(x')\}.$

Next, let $\nu_1, \cdots, \nu_k$ be the  unit normal vectors of
the tangent planes of $S_1, \cdots, S_k$ at $0$
such that, for any $i=1, \cdots, k$,
$$\langle \nu_i, e_n\rangle >0.$$

\begin{remark}
We choose the unit vector $e_n\in \mathbb{S}^{n-1}$ such that $$\min_{i=1,...n}\langle \nu_i, e_n\rangle =\max_{e_n\in \mathbb{S}^{n-1}}\min_{i=1,...n}\langle \nu_i, e_n\rangle.$$
For convenience, later on, we assume $R'=R$ and refer to $\sup_{(x', x_n)\in B_{r}(0)}|D^2f_i(x')|$ as the $C^2$ semi-norm of $S_i$ in $B_{r}(0)$ for any $r\in(0,R)$.

\end{remark}

Denote  the  distance from $x$ to $S_i$ with respect to the Euclidean metric $g_{E}$ by $d(x,S_i)$.
Then, the {\it signed distance} from $x$ to $S_i$ under the Euclidean metric with respect to $\nu_i$ near $0$ is defined by
\begin{equation}\label{oriented distance}
d_i(x)=
\begin{cases}
d(x,S_i)
& \text{if } x_n>f_i(x'),\\
0
& \text{if } x \in S_i,\\
-d(x,S_i)
& \text{if } x_n<f_i(x').\\
\end{cases}
\end{equation}
Under this setting, we always have $\{d_1>0, \cdots, d_n>0\}\bigcap B_r(0)\subseteq\Omega$ and $\{d_1<0, \cdots, d_n<0\}\bigcap B_r(0) \bigcap \Omega=\emptyset$ for $r>0$ sufficiently small.
Similarly, we can define the signed distance from $x$ to $P_i$ with respect to $\nu_i$.

Let $\Omega$ and $g$ be as in Assumption \ref{assumption-basic}. Denote the ordinary distance from $x$ to $S_i$ with respect to $g$ by $d_{g}(x,S_i)$.
Then, the {\it signed distance} from $x$ to $S_i$ with respect to $g$ and $\nu_i$ near $0$ is defined by
\begin{equation}\label{oriented distance}
d_{g,i}(x)=
\begin{cases}
d_g(x,S_i)
& \text{if } x_n>f_i(x'),\\
0
& \text{if } x \in S_i,\\
-d_g(x,S_i)
& \text{if } x_n<f_i(x').\\
\end{cases}
\end{equation}
For each $i$, $d_i(x)$ and $d_{g,i}(x)$ have the same sign.

With the notation of signed distance,
we can write the solution of
\eqref{eq-MainEq}-\eqref{eq-MainBoundary} for $\Omega=V_0$ in a form of $d_{P_i}$,  where $d_{P_i}$ is the signed distance form $x$ to $P_i$ with respect to $\nu_i$.
By a rotation,  we assume $V_0=
V_{k} \times \mathbb R^{n-k}$, with $V_k\subset \mathbb R^k$, where $1\leq k\leq n$.
Then for any $x $, the projection from $x$ to $\mathbb R^k\times\{0\}$ can
be uniquely determined by
$d_{P_1},...,d_{P_k}$.
With this
one-to-one correspondence between $x\in \mathbb R^k\times \{0\}$ and $(d_{P_1},...,d_{P_k})$,
the solution of
\eqref{eq-MainEq}-\eqref{eq-MainBoundary} for $\Omega=V_0$ can be expressed as
\begin{equation}\label{eq-Solution-Cone-d-coordinates-nd2}u_{V_0}(x)=f_{V_0}(d_{P_1}(x),...,d_{P_k}(x)).\end{equation}
In particular, when $k=1$, $f_{V_0}(d_1)=d_1^{-\frac{n-2}{2}}$.

\smallskip

\textit{Second}, we turn our attention to a $C^{2}$-diffeomorphism $T$, which was introduced in \cite{hanshen2} and  constructed as following.
If $k<n$, we add $n-k$ hyperplanes $P_{k+1}, \cdots, P_n$ which pass the point $0$
with their unit normal vectors $\nu_{k+1}, \cdots, \nu_n$ forming an orthonormal basis
of the orthogonal complement of Span$\{\nu_1,..,\nu_k\}$
and  denote by $d_{P_{j}}$ the signed distance from $x$ to $P_{j}$ with respect to $\nu_j$,
$j=k+1,\cdots,n$.
Then, for any $x$ sufficiently small, we define
$$T_{\{S\}}x=(d_{S_1}(x), \cdots,d_{S_k}(x), d_{P_{k+1}}(x), \cdots, d_{P_n}(x)),$$
where $d_{S_i}(x)$ is the signed distance from $x$ to $S_i$ with respect to $\nu_i$, $i=1, \cdots, k$.
$T_{\{S\}}$ is a $C^{2}$-diffeormorphism
in a neighborhood of the origin. A similar result holds for $T_{\{P\}}$, with $P_1, \cdots, P_n$ replacing
$S_1, \cdots, S_k,P_{k+1},..., P_n$.  Then, the map
\begin{equation}\label{C2diffeormorphism1}T=T^{-1}_{\{P\}}\circ T_{\{S\}}\end{equation} is a $C^{2}$-diffeomorphism
near the origin and has the property that the signed distance from $x$ to $S_i$ is the same as that from $Tx$
to $P_i$, for $i=1, \cdots, k$.

Similarly, let $\Omega$ and $g$ be as in Assumption \ref{assumption-basic}, we define $$T_{g,\{S\}}x=(d_{g,S}(x), \cdots,d_{g,S_k}(x), d_{g,P_{k+1}}(x), \cdots, d_{g,P_n}(x)),$$ where all signed distance are
with respect to $g$. Then
\begin{equation}\label{C2diffeormorphism} T_g=T^{-1}_{\{P\}}\circ T_{g,\{S\}} \end{equation}
is a $C^{2}$-diffeomorphism which maps
$B_r (0)$ to $T_g(B_r(0))\subseteq\mathbb R^n$, with
$T_g(\Omega \cap B_r (0))= V_{0}\cap T_g(B_r(0))$ and
$T_g(\partial\Omega \cap B_r (0))= \partial V_{0}\cap T_g(B_r(0))$ for some $r>0$.

For the $C^{2}$-diffeomorphism $T$, we give some computation which will be used in next sections.

 Without loss of generality, we assume $k=n$. Otherwise, we can add
$n-k$ hyperplanes as in the construction of $T$.
For a small positive constant $r,$
\begin{align*}
    T:\,&\Omega\cap B_r(0)\rightarrow\, V_0\\&x=(x_1,x_2,\cdots,x_n)\mapsto \bar{x}=(\bar{x}_1,\bar{x}_2,\cdots,\bar{x}_n)
\end{align*} such that $d_{S_i}(x)=d_{P_i}(\bar{x})=d_{P_i}(Tx)$ where $d_{S_i}$ is the signed distance to the $S_i$ and $d_{P_i}$ is the signed distance to the $P_i$ with respect to the Euclidean metric.

All vectors below are column vectors. For convenience, we use $d_i$ to denote $d_{S_i}$ and define
$n_i=\nabla_{x} d_i$ and $N=(n_1,n_2,\cdots,n_n)_{n\times n}.$ Then $$N^T(x)=\frac{\partial (d_1,\cdots,d_n)}{\partial (x_1,\cdots,x_n)},$$
abbreviated as $N^T$ sometimes.
 Let $$\bar{N}^T=\frac{\partial (d_1,\cdots,d_n)}{\partial (\bar{x}_1,\cdots,\bar{x}_n)},$$
and
$$(\bar{N}^T)^{-1}=\frac{\partial(\bar{x}_1,\cdots,\bar{x}_n)}{\partial (d_1,\cdots,d_n)}.$$
Then$$\frac{\partial(\bar{x}_1,\cdots,\bar{x}_n)}{\partial (x_1,\cdots,x_n)}=(\bar{N}^T)^{-1}N^T=(N\bar{N}^{-1})^T=(N(x)N(0)^{-1})^T.$$
In particular, we have
$$\frac{\partial(\bar{x}_1,\cdots,\bar{x}_n)}{\partial (x_1,\cdots,x_n)}|_{x=0}=I_{n\times n}.$$
Hence,
\begin{align}\label{diffeo-0}
JT(0)=I_{n\times n},
\end{align}
where $JT$ is the Jacobian matrix of $T$.

Now we consider the case that $\{S_i\}_{i=1,\cdots,n}$  are $C^2$ hypersurfaces near $x_0=0$ and  recall that we denote the unit normal vectors of $S_i$ at $0$ by $\nu_i$. Then
\begin{align}\label{n-nv}|n_i-\nu_i|\leq C|x-0|=C|x|.\end{align}
Hence,$$|\frac{\partial(\bar{x}_1,\cdots,\bar{x}_n)}{\partial (x_1,\cdots,x_n)}-I_{n\times n}|\leq C|x|,$$ and thus
\begin{align}\label{difxbarx}
|\bar{x}-x|\leq C|x|^2.
\end{align}
Then we make some useful computation for $f\in C^\infty (V_0).$
Take $f\circ T(x)=f(Tx)$. Then $f\circ T\in C^2\big(\Omega\cap B_r(0)\big)$ for some constant $r>0$, and
\begin{align*}
   \partial_{x_i}(f\circ T)(x)=\sum_{p,k=1}^n\partial_{\bar{x}_p}f(\bar{x})\partial_{d_k}\bar{x}_p(\bar{x})
   \partial_{x_i}d_k(x)=\sum_{k=1}^n\partial_{d_k}f(\bar{x})
   \partial_{x_i}d_k(x),
\end{align*} where $\bar{x}=Tx $ and the sum of $p$ and $ k$ are both taken from $1$ to $n.$
\begin{align*}
   \partial_{x_ix_j}^2(f\circ T)(x)=\sum_{k=1}^n\partial_{d_k}f(\bar{x})
   \partial_{x_ix_j}^2d_k(x)+\sum_{k,l=1}^n\partial_{d_kd_l}^2f(\bar{x})
   \partial_{x_i}d_k(x)\partial_{x_j}d_l(x).
\end{align*}
Since $S_i$ is $C^2$ near $0$, $\partial_{x_ix_j}^2d_k$ is bounded. Hence, by \eqref{n-nv},
\begin{align}\label{nablaf}\begin{split}
 |\partial_{x_i}(f\circ T)(x)-\partial_{\bar{x}_i}f(\bar{x})|
 =&\sum_{k=1}^n|\partial_{d_k}f(\bar{x})n_k^i-\partial_{d_k}f(\bar{x})\bar{n}_k^i|
\\ \leq &C|\nabla f(\bar{x})||x|,\end{split}\end{align}where $n_i=(n_i^1,\cdots,n_i^n)$ and
\begin{align}\label{hessf}
 |\partial_{x_ix_j}^2(f\circ T)(x)-\partial_{\bar{x}_i\bar{x}_j}^2 f(\bar{x})|
 \leq C(|\nabla f(\bar{x})|+|x||\nabla^2 f(\bar{x})|),\end{align}
 where $C$ only depends on $n$, $V_0$ and the
$C^{2}$-seminorms of $S_1, \cdots, S_n$ near $0.$
 In particular, we have
 \begin{align}\label{delerr}
    |\Delta_x(f\circ T)(x)-\Delta_{\bar{x}} f(\bar{x})|
     \leq C(|\nabla f(\bar{x})|+|x||\nabla^2 f(\bar{x})|).
 \end{align}

\section{ Existence and Uniqueness for \eqref{eq-MEq1}-\eqref{eq-MBoundary2} in Lipschitz domains in $(M, g)$}\label{sec-Basic Estimates}

In this section,
we consider a class of second order elliptic partial differential operators which can be viewed as suitable perturbation of the Euclidean Laplacian
operator and give some estimates for solutions of the corresponding perturbation elliptic equations and prove the existence and uniqueness of positive solutions for \eqref{eq-MEq1}-\eqref{eq-MBoundary2} in bounded Lipschitz domains.

\subsection{A class of second order differential operators on $(\mathbb R^n, g_E)$ }

First, we state a basic result that will be used later.

\begin{lemma}\label{lemma-supersution}
Let $ \Omega $ be a domain in $ \mathbb{R}^{n}$,
and $u$ and $v$ be two nonnegative solutions of \eqref{eq-MainEq}.
Then $u+v$ is a
nonnegative supersolution of \eqref{eq-MainEq}.
\end{lemma}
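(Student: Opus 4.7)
The plan is essentially to add the two equations and then invoke an elementary pointwise superadditivity inequality for the power nonlinearity. Writing $p = \frac{n+2}{n-2}$, since $u$ and $v$ each solve \eqref{eq-MainEq}, linearity of the Laplacian gives
\begin{equation*}
\Delta(u+v) = \Delta u + \Delta v = \tfrac14 n(n-2)\bigl(u^{p} + v^{p}\bigr).
\end{equation*}
So to conclude that $u+v$ is a supersolution, that is, $\Delta(u+v) \leq \tfrac14 n(n-2)(u+v)^{p}$, it suffices to prove the pointwise inequality $a^{p} + b^{p} \leq (a+b)^{p}$ for all $a, b \geq 0$, given that $p = \frac{n+2}{n-2} > 1$ (since $n \geq 3$).

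For this elementary step I would record one of the standard one-line arguments. First I would dispense with the trivial case $a+b=0$; otherwise, setting $t = a/(a+b) \in [0,1]$, the claim reduces to $t^{p} + (1-t)^{p} \leq 1$, which follows from $t^{p} \leq t$ and $(1-t)^{p} \leq 1-t$ on $[0,1]$ for $p \geq 1$. Alternatively, one can fix $b \geq 0$ and observe that $f(a) := (a+b)^{p} - a^{p} - b^{p}$ satisfies $f(0) = 0$ and $f'(a) = p\bigl[(a+b)^{p-1} - a^{p-1}\bigr] \geq 0$ for $a \geq 0$, whence $f \geq 0$.

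Combining the two displays yields $\Delta(u+v) \leq \tfrac14 n(n-2)(u+v)^{p}$ in $\Omega$, and nonnegativity of $u+v$ is immediate from that of $u$ and $v$. There is really no obstacle here: the only potentially subtle issue would be regularity if one wanted a distributional formulation, but since $u, v \in C^{2}(\Omega)$ (as smooth solutions of a subcritical-regularity semilinear equation), the computation is pointwise and the proof is complete.
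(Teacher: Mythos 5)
Your proof is correct and is exactly the standard argument one would expect here: the paper states this lemma without proof (calling it a basic result), and your computation $\Delta(u+v)=\tfrac14 n(n-2)(u^{p}+v^{p})\le \tfrac14 n(n-2)(u+v)^{p}$ via superadditivity of $t\mapsto t^{p}$ for $p>1$ on nonnegative reals fills that gap cleanly.
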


Next, we turn our attention to a class of second order elliptic operators,
Set
\begin{align}\label{L}
    L= \sum_{i,j=1}^n a_{ij}\partial_{ij}+\sum_{i=1}^n b_i\partial_i+c,
\end{align} where
$\big(a_{ij}\big)_{n\times n}$ is positive definite in $B_{2}(0)$,
 and $L$ satisfies the structure condition
\begin{align}\label{structurecondition}
     \sum_{i,j=1}^n|a_{ij}-\delta_{ij}|+ \sum_{i=1}^n|x||b_i|+|x|^2|c|\leq C_{L}|x|^2 \quad \quad \text{in } B_{2}(0),
\end{align}
where $C_{L}$ is a positive constant.

For any $R>0,$ set
\begin{align}\label{ur}
    u_R=(\frac{2R}{R^2-|x|^2})^\frac{n-2}{2}.
\end{align}
Then
\begin{align*}
    \partial_{x_i}u_R&=\frac{n-2}{2}(\frac{2R}{R^2-|x|^2})^{\frac{n-2}{2}
    -1}\frac{4Rx_i}{(R^2-|x|^2)^2},\\
     \partial^2_{x_ix_j}u_R&=\frac{1}{R}\frac{n-2}{2}(\frac{2R}{R^2-|x|^2})^{\frac{n-2}{2}
    +1}\delta_{ij}+\frac{1}{R}\frac{n-2}{2}\frac{n}{2}(\frac{2R}{R^2-|x|^2})^\frac{n-2}{2}
   \frac{4Rx_ix_j}{(R^2-|x|^2)^2} \\
      \Delta u_R&=\frac{n(n-2)}{4}[(\frac{2R}{R^2-|x|^2})^\frac{n-2}{2}]
      ^\frac{n+2}{n-2}.
\end{align*}

It is straight to verify that there exists a positive constant $R_{*}$ depending only on $n$ and $C_{L}$ such that when $R\leq R_*$,
\begin{align}\label{lur}\begin{split}
   L(2u_R)=&2\Delta u_R+2 \sum_{i,j=1}^n(a_{ij}-\delta_{ij})\partial^2_{ij}u_R+2 \sum_{i=1}^nb_i\partial_iu_R+2cu_R\\
   =&2\frac{n(n-2)}{4}u_R^\frac{n+2}{n-2}+O(|x|^2u_R^\frac{n+2}{n-2})+
   O(|x|u_R^\frac{n}{n-2}+O(u_R))\\
  < &\frac{n(n-2)}{4}(2u_R)^\frac{n+2}{n-2} \quad \text{in}\quad \Omega \cap B_{R}(0)\end{split}
\end{align}Hence, $2u_R$ is a super solution.

 Suppose $\Omega\subseteq\mathbb{R}^{n}$ is a Lipschitz domain with $0\in \partial\Omega$, $\partial\Omega \cap B_{2}(0)$ can be
 expressed by a Lipshitz function $x_n=f(x')$ in $B_{2}(0')$ and
\begin{equation}\label{omega-star}\Omega_*:=\Omega\cap B_{2}(0)=\{(x',x_n)\in B_{2}(0)|x_n>f(x')\},\end{equation}  where $f$ is Lipschitz with $f(0')=0.$

Suppose that $u_*$ is the positive solution to the following problem
\begin{align}\label{ustar}\begin{split}
   \Delta u_*=&\frac{n(n-2)}{4}u_*^\frac{n+2}{n-2}\quad \text{in}\quad \Omega\cap B_{1}(0),\\
   u_*=&+\infty\quad \text{on}\quad \partial(\Omega\cap B_{1}(0)).
\end{split}\end{align}

The following Lemma concerns properties of $u_*$ which was essentially derived from Lemma 3.2 and Lemma 3.4 in \cite{hanshen2}. The proofs there can be modified to yield this Lemma.

\begin{lemma}\label{ustar-estimates}
Let $ \Omega_{*} $ be a domain given in \eqref{omega-star} and $u_*$ be the solution of \eqref{ustar}.
Then, it holds in $\Omega_*\cap B_{\frac{1}{2}}(0),$
 \begin{equation}\label{uwtar-growth}C^{-1}\leq d^\frac{n-2}{2}u_*\leq 2^\frac{n-2}{2}\end{equation}
and
\begin{equation}\label{ustar-Derivative}d|\nabla u_*|+d^2|\nabla^2 u_*|\leq Cu_*
,\end{equation} where $d(x)$ is the Euclidean distance from
$x$ to $\partial\Omega_*\cap B_{1}(0)$, and $C$ is some constant depending only on $n$ and $\|f\|_{C^{0,1}(B_1(0'))}$.
\end{lemma}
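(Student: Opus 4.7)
The plan is to establish the two-sided bound $C^{-1}\le d^{(n-2)/2}u_*\le 2^{(n-2)/2}$ first and then bootstrap to the gradient and Hessian estimates via a rescaling argument. The key tool throughout is the domain monotonicity of \eqref{eq-MainEq}: because the nonlinearity $u^{(n+2)/(n-2)}$ is strictly increasing, admissible domains $\Omega_1\subset \Omega_2$ with their infinite-boundary Loewner--Nirenberg solutions satisfy $u_{\Omega_1}\ge u_{\Omega_2}$ on $\Omega_1$.

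For the upper bound, fix $x_0\in \Omega_*\cap B_{1/2}(0)$ and focus on the regime in which $B_{d(x_0)}(x_0)\subset \Omega\cap B_1(0)$ touches $\partial \Omega$ at the nearest boundary point (the complementary regime is a direct interior-boundedness statement). The explicit Loewner--Nirenberg solution
\[
w(x)=\Big(\frac{2d(x_0)}{d(x_0)^2-|x-x_0|^2}\Big)^{(n-2)/2}
\]
on $B_{d(x_0)}(x_0)$ blows up on $\partial B_{d(x_0)}(x_0)$, so the comparison principle gives $u_*\le w$ in this ball, and evaluating at $x_0$ yields $u_*(x_0)\le (2/d(x_0))^{(n-2)/2}$.

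For the lower bound, I would exploit the Lipschitz description $\Omega_*=\{x_n>f(x')\}$. Let $y_0\in \partial \Omega\cap B_1(0)$ realize $d(x_0)$. The Lipschitz condition on $f$ produces an infinite open cone $V(y_0)$ with vertex $y_0$, whose opening depends only on $\|f\|_{C^{0,1}(B_1(0'))}$, with $\Omega\cap B_1(0)\subset V(y_0)$. By Section \ref{sec-pre}, the solution in the infinite cone takes the form $u_{V(y_0)}(x)=|x-y_0|^{-(n-2)/2}g(\theta)$ with $g>0$ smooth on the cross-section of $V(y_0)$. The angular direction $(x_0-y_0)/|x_0-y_0|$ lies in a compact subset of that cross-section depending only on the Lipschitz constant, so $g$ is bounded below there by a positive constant $c_0=c_0(n,\|f\|_{C^{0,1}})$. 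Domain monotonicity then gives $u_*(x_0)\ge u_{V(y_0)}(x_0)\ge c_0\,d(x_0)^{-(n-2)/2}$.

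For the derivative estimates I would perform the standard rescaling $v(y)=d(x_0)^{(n-2)/2}u_*\big(x_0+(d(x_0)/4)y\big)$ on $B_1(0)$. A direct computation yields $\Delta v=\frac{n(n-2)}{64}v^{(n+2)/(n-2)}$, and the two-sided bound just proved gives $C^{-1}\le v\le C$ uniformly on $B_1(0)$. Standard interior elliptic estimates then deliver $|\nabla v(0)|+|\nabla^2 v(0)|\le C$, which, after unwinding the scaling, is precisely $d(x_0)|\nabla u_*(x_0)|+d(x_0)^2|\nabla^2 u_*(x_0)|\le C u_*(x_0)$. The technical hinge of the argument is the lower bound: the merely Lipschitz regularity of $\partial \Omega$ prevents a direct half-space comparison, and one has to turn the Lipschitz constant into a uniform-in-$y_0$ infinite-cone containment while invoking the positivity of the angular profile $g$ on compact subsets of the cross-section.
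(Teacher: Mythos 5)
Your proposal is correct and follows essentially the same route as the source the paper defers to for this lemma (Lemmas 3.2 and 3.4 of \cite{hanshen2}): the interior ball comparison gives the sharp upper bound $2^{\frac{n-2}{2}}$, the containment $\Omega\cap B_1(0)\subseteq V(y_0)$ for the uniform Lipschitz cone $\{x_n-y_{0,n}>-\|f\|_{C^{0,1}}|x'-y_0'|\}$ together with the positivity of the angular profile $g$ on the compact set of admissible directions (which follows from $B_{d(x_0)}(x_0)\subseteq V(y_0)$) gives the lower bound, and the scale invariance of \eqref{eq-MainEq} plus interior elliptic estimates gives \eqref{ustar-Derivative}. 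No gaps.
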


Recall $L$ in \eqref{L}. Suppose that $u\in C^{2}(\Omega_*)$ is a positive solution to the following problem
\begin{align}\label{uo}\begin{split}
   L u=&\frac{n(n-2)}{4}u^\frac{n+2}{n-2}\quad \text{in}\quad \Omega_*,\\
   u=&+\infty\quad\text{on}\quad \{x_n=f(x')\}\cap B_{1}(0).\end{split}
\end{align}Note $\{x_n=f(x')\}\cap B_{1}(0)\subsetneq\partial\Omega_*$.

We have the following asymptotic estimates.
\begin{lemma}\label{A}
Let $\Omega$ be a Lipschitz domain and $\Omega_*$ be given in \eqref{omega-star}. Suppose $u\in C^{2}(\Omega_*)$ is a positive solution of \eqref{uo} and $u_*$ is the positive solution to \eqref{ustar}.
Then there exists a positive constant $\bar{R}$ depending only on $n$, $C_L$ and $\|f\|_{C^{0,1}(B_{1}(0'))}$ such that
\begin{align}\label{Ar}
   |u-u_*|\leq Cu_*|x|^\alpha\quad \text{in}\quad \Omega_*\cap B_{\bar{R}}(0),
\end{align} where $\alpha=2$ for $n\geq 6$, $\alpha=\frac{n-2}{2}$ for $n=3,4,5$, and $C$ is some constant depending only on $n$,
$C_L$ and $\|f\|_{C^{0,1}(B_{1}(0'))}$.
\end{lemma}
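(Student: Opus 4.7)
The plan is to compare $u$ with $u_*$ by constructing barriers of the form $\bar{u}=u_*(1+\epsilon|x|^\alpha)$ (supersolution) and $\underline{u}=u_*(1-\epsilon|x|^\alpha)$ (subsolution) for the nonlinear equation \eqref{uo}, and then applying the maximum principle on $\Omega_*\cap B_{\bar{R}}(0)$ to squeeze $u$ between them. The starting point is the product rule
\[
L(u_*\phi)=\phi Lu_*+u_*\Bigl(\sum_{i,j} a_{ij}\phi_{ij}+\sum_i b_i\phi_i\Bigr)+2\sum_{i,j} a_{ij}(u_*)_i\phi_j,
\]
applied with $\phi=1\pm\epsilon|x|^\alpha$. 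Using the structure condition \eqref{structurecondition} together with $|\nabla u_*|\le Cu_*/d$ and $|\nabla^2 u_*|\le Cu_*/d^2$ from Lemma \ref{ustar-estimates}, one obtains $|Lu_*-\Delta u_*|\le Cu_*|x|^2/d^2$, while the elementary identities $\Delta|x|^\alpha=\alpha(\alpha+n-2)|x|^{\alpha-2}$ and $\nabla|x|^\alpha=\alpha|x|^{\alpha-2}x$ handle the remaining contributions from $\phi$.

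To check that $\bar{u}$ is a supersolution, I would compare $L\bar{u}$ with $\tfrac{n(n-2)}{4}\bar{u}^{(n+2)/(n-2)}$ using the convexity inequality $(1+t)^{(n+2)/(n-2)}-(1+t)\ge\tfrac{4}{n-2}t$ for small $t\ge 0$, which produces a gain of order $n\epsilon u_*^{(n+2)/(n-2)}|x|^\alpha$ on the favorable side. Since $u_*\sim d^{-(n-2)/2}$ and $u_*^{(n+2)/(n-2)}\sim d^{-(n+2)/2}$, the supersolution inequality reduces to an algebraic comparison in the ratio $d/|x|\in(0,1]$. The perturbation term of size $Cu_*|x|^2/d^2$ can be absorbed into the gain once $\alpha\le 2$; the additional terms $\epsilon u_*\alpha(\alpha+n-2)|x|^{\alpha-2}$ and $\epsilon\alpha|x|^{\alpha-2}\langle x,A\nabla u_*\rangle$ impose a sharper restriction that selects $\alpha=2$ when $n\ge 6$ and $\alpha=(n-2)/2$ when $n=3,4,5$. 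A symmetric argument establishes the subsolution property of $\underline{u}$.

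For the boundary comparison on $\partial(\Omega_*\cap B_{\bar{R}}(0))$: on $\{x_n=f(x')\}\cap B_{\bar{R}}$ both $u$ and the barriers blow up, and the rough matching $u\sim u_*\sim d^{-(n-2)/2}$ (which I would first establish using $2u_R$ from \eqref{ur} as a global supersolution together with a corresponding lower barrier) guarantees $\underline{u}\le u\le\bar{u}$ in a one-sided neighborhood of this face. On the remaining piece $\partial B_{\bar{R}}(0)\cap\Omega$ both quantities are finite and comparable, so by enlarging $\epsilon$ the boundary ordering is maintained. Applying the comparison principle to the linearized operator $L-\tfrac{n(n+2)}{4}\xi^{4/(n-2)}$, whose zero-order coefficient is negative and therefore sign-favorable for a maximum principle, then yields $\underline{u}\le u\le\bar{u}$ on $\Omega_*\cap B_{\bar{R}}(0)$, which is precisely \eqref{Ar}.

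The main obstacle I anticipate is the dimension-dependent balance in the supersolution inequality: the naively optimal choice $\alpha=2$ dictated by the structure condition is realizable only when the gradient cross-term $\sum a_{ij}(u_*)_i(|x|^\alpha)_j$ can be absorbed into the nonlinearity, a condition that degenerates in low dimensions because the growth of $u_*$ near the boundary is too slow relative to the scaling of the correction. Handling this cleanly when $n\in\{3,4,5\}$ forces the reduced exponent $\alpha=(n-2)/2$, and careful tracking is required to ensure that the final constant $C$ depends only on $n$, $C_L$ and $\|f\|_{C^{0,1}(B_1(0'))}$ as asserted, rather than on finer geometric features of $\partial\Omega$ near the origin.
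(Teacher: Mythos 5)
Your overall strategy (multiply $u_*$ by $1\pm$ a small correction, verify the super/subsolution inequality via the convexity of $t\mapsto t^{\frac{n+2}{n-2}}$, and apply the comparison principle) is the same as the paper's, but the specific barrier $u_*(1\pm\epsilon|x|^{\alpha})$ does not work, and this is not a technicality. Writing $r=|x|$, the nonlinear gain your convexity inequality produces is of size $\epsilon r^{\alpha}u_*^{\frac{n+2}{n-2}}\sim \epsilon u_*r^{\alpha}d^{-2}$, while the leftover terms $\epsilon u_*\Delta(r^{\alpha})\sim\epsilon u_*r^{\alpha-2}$ and $\epsilon\nabla u_*\cdot\nabla(r^{\alpha})\lesssim \epsilon u_* r^{\alpha-1}d^{-1}$ are smaller only by factors of $(d/r)^2$ and $d/r$ respectively. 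Since $d\le r$ these ratios are bounded by $1$ but not by a small constant: in the regime $d(x)\sim|x|$ all three terms have the same order, and whether the gain wins depends on the uncontrolled constants in \eqref{uwtar-growth}--\eqref{ustar-Derivative} (already in the exact half-space case $u_*=d^{-\frac{n-2}{2}}$ with $\alpha=2$, the term $u_*\Delta r^2=2nu_*$ exceeds the gain $n\epsilon r^2u_*d^{-2}$ when $d$ is close to $r$). The paper's barrier $w=u_*+Au_*^{\beta}+Bu_*r^2$ resolves exactly this: the extra term $Au_*^{\beta}$ (with $\beta=1-\frac{2\alpha}{n-2}$, so $u_*^{\beta}\sim u_*d^{\alpha}$) contributes its own negative term $D_1\sim -Au_*^{\beta+\frac{4}{n-2}}\gtrsim Au_*d^{\alpha-2}\ge Au_*$, which absorbs the leftovers from the $Bu_*r^2$ term once $A\gg B$; moreover $\Delta u_*^{\beta}\le\beta u_*^{\beta-1}\Delta u_*$ has a favorable sign precisely because $0\le\beta\le1$, i.e.\ $\alpha\le\frac{n-2}{2}$. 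This, together with the $O(u_*d^{\frac{n-2}{2}})$ error coming from the bounded additive piece $2u_R$, is what forces $\alpha=\frac{n-2}{2}$ when $n=3,4,5$ — not the gradient cross term, whose ratio to the gain is $d/r$ independently of $n$ and $\alpha$, so your diagnosis of the dimension restriction is also off.

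Two further gaps. First, the boundary comparison on the blow-up face: knowing $u\sim u_*\sim d^{-\frac{n-2}{2}}$ up to multiplicative constants does not give the one-sided inequality $u\le u_*(1+\epsilon r^{\alpha})$ anywhere near $\{x_n=f(x')\}$ (the constants could be $2$ and $1/2$), and proving $u/u_*\to1$ at a merely Lipschitz boundary is essentially the content of the lemma, so your argument is circular there. The paper avoids this with the shift $u^{\epsilon}(x)=u(x',x_n+\epsilon)$, which is finite up to the face while the barrier (augmented by $2u_R$, which also blows up on $\partial B_R$ and removes the need for any information about $u$ on $\partial B_R\cap\Omega$) is infinite, and then lets $\epsilon\to0$. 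Second, the zero-order coefficient $c$ of $L$ is only bounded ($|c|\le C_L$) and may be positive, so the operator you linearize is not automatically sign-favorable for the maximum principle; one must first restrict to the region where $u$ or the barrier exceeds $\big(\tfrac{4C_L}{n(n+2)}\big)^{\frac{n-2}{4}}$, as the paper does with the set $G$.
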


\begin{proof}

Take $$ \beta=\left\{
\begin{aligned}
&1-\frac{4}{n-2}=\frac{n-6}{n-2}\,\,\,&n\geq 6,\\
&0\,\,\,&n=3,4,5.
\end{aligned}
\right.
$$

Set $r=|x|$ and $w=u_*+Au_*^\beta+Bu_*r^2 $ where $A$ and $B$ are two positive constants to be determined and $v_*=u_*^{-\frac{2}{n-2}}.$
By \eqref{uwtar-growth},
\begin{align}\label{vd}
    v_*\sim d \quad\text{ near} \quad 0.
\end{align} Then
\begin{align}
    u_*^\beta\sim u_* v_*^\alpha \sim u_*d^\alpha\quad \text{near} \quad 0.
\end{align}
Next,
\begin{align}\label{uij}\begin{split}
   \partial_i u_*^\beta=&\beta u_*^{\beta-1}\partial_i u_*,\\
   \partial_{ij} u_*^\beta=&\beta u_*^{\beta-1}\partial_{ij} u_*+
   \beta(\beta-1)u_*^{\beta-2} \partial_i u_* \partial_j u_*\\
 \Delta u_*^\beta=&\beta u_*^{\beta-1}\Delta u_*+
   \beta(\beta-1)u_*^{\beta-2}|\nabla u_*|^2\\
   \leq &\beta u_*^{\beta-1}\Delta u_*,\\
      \partial_i (u_*r^2)=&r^2\partial_i u_*+2x_iu_*,\\
   \partial_{ij} (u_*r^2)=&r^2 \partial_{ij} u_*+
   2\partial_i u_*x_j+2\partial_j u_*x_i+2u_*\delta_{ij},\\
 \Delta (u_*r^2)=&r^2 \Delta u_*+4r\nabla r\nabla u_*+
  2nu_*.\end{split}
\end{align}Then
\begin{align*}
  & \Delta w-\frac{n(n-2)}{4}w^{\frac{n+2}{n-2}}\\= & \Delta w-\frac{n(n-2)}{4}u_*^{\frac{n+2}{n-2}}(1+Au_*^{\beta-1}+Br^2)^{\frac{n+2}{n-2}}
\\ \leq& \Delta w-\frac{n(n-2)}{4}u_*^{\frac{n+2}{n-2}}[1+\frac{n+2}{n-2}(Au_*^{\beta-1}+Br^2)]
\\ = &A\Delta (u_*^\beta)+B\Delta (u_*r^2)-
\frac{n(n+2)}{4}[Au_*^{\beta+\frac{4}{n-2}}+Br^2u_*^{\frac{n+2}{n-2}}]
\\ \leq& D_1+D_2+ D_3,\end{align*} where
\begin{align}\label{d1d2d3}\begin{split}
 D_1=&    -\frac{n(n+2)}{4}(1-\frac{n-2}{n+2}\beta)Au_*^{\beta+\frac{4}{n-2}}+A\beta(\beta-1)
u_*^{\beta-2}|\nabla u_*|^2,
\\  D_2=& -\frac{n(n+2)}{4}(1-\frac{n-2}{n+2}\beta)Br^2u_*^{\frac{n+2}{n-2}}
\\  D_3=& 4Br\nabla r\nabla u_*+B2n u_*.\end{split}
\end{align}
Note all the coefficients in $D_1$ and $D_2$ are negative and by Lemma  \ref{ustar-estimates} for some $C_1$,
\begin{align}\label{D3}
|D_3|\leq BC_1(\frac{u_*r}{d}+u_*).
\end{align}
Then
\begin{align}\label{lw}\begin{split}
   Lw-\frac{n(n-2)}{4}w^{\frac{n+2}{n-2}}=
   D_1+ D_2+ D_3+D_4,\end{split}
\end{align} where $D_4=O(|x|^2)|\nabla^2 w|+O(|x|)|\nabla w|+O(1)w.$

Now we choose $R$ small and $A,B$ big such that
\begin{align}\label{lwleq}
    Lw\leq \frac{n(n-2)}{4}w^{\frac{n+2}{n-2}}\quad \text{in}\quad \Omega_*\cap B_{R}(0).
\end{align}
We fix the constants in the  following order. \begin{itemize}
\item[Step 1] Take $R>0$ small enough compared to $A,B$ such that $Au_*^{\beta-1}+Br^2,$ for $ r\leq R,$ is small with $A$ and $B$ determined in next steps.
\item[Step 2] According to the powers of $r$ and $d$ in each term, we fix $B$
big such that $|\frac{1}{2}D_2|\geq|D_4|.$  In fact, to estimate $|\nabla^2 w|,\,\,|\nabla w|$ and $w,$ we use the computation \eqref{uij}, \eqref{uwtar-growth} and \eqref{ustar-Derivative}. From \eqref{uwtar-growth} and \eqref{ustar-Derivative}, we have
\begin{align*}
   |\nabla^2 u_*|\sim \frac{u_*}{d^2},
   \,\,|\nabla u_*|\sim \frac{u_*}{d},\,\,u_*^{\frac{n+2}{n-2}}\sim\frac{u_*}{d^2}.
\end{align*}
\item[Step 3] Fix $A$ big enough such that $|D_1+\frac{1}{2}D_2|\geq|D_3|.$
\end{itemize}Then by \eqref{lur} and \eqref{lwleq}, we have
\begin{align}\label{lurplw}\begin{split}
   L(w+2u_R)\leq
   &\frac{n(n-2)}{4}[w^\frac{n+2}{n-2}+(2u_R)^\frac{n+2}{n-2}]\\
   \leq &\frac{n(n-2)}{4}(w+2u_R)^\frac{n+2}{n-2}.\end{split}
\end{align}Set $F(\cdot)=\frac{n(n-2)}{4}(\cdot)^\frac{n+2}{n-2}.$ Then
\begin{align}\label{Atoo}
    Lu=F(u),\quad L(w+2u_R)\leq F(w+2u_R).
\end{align}

For any small $\epsilon>0,$ we take
\begin{equation}\label{u-epsilon}
u^\epsilon(x)=u(x_1,\cdots,x_{n-1},x_n+\epsilon ).
\end{equation}
By \eqref{Atoo}, at any $x\in\Omega\bigcap B_R(0)$,
\begin{align*}
    & \sum_{i,j=1}^n a_{ij}(w+2u_R-u^\epsilon)_{ij}+ \sum_{i=1}^nb_i(w+2u_R-u^\epsilon)_{i}\\
    \leq & F(w+2u_R)-F(u^\epsilon)-c(w+2u_R-u^\epsilon)\\
    \leq & F'(\xi_{x})(w+2u_R-u^\epsilon)-c(w+2u_R-u^\epsilon),
\end{align*} where we use the mean value theorem in the last equality and $\xi_{x}$ is between $w+2u_R$ and $u^\epsilon.$ Hence, $F'(\xi_{x})>0.$
Note $c$ in \eqref{L} is only bounded and it can change its sign. By \eqref{structurecondition}, $|c|\leq C_{L}$.

We will apply the maximum principle to prove
 \begin{align}\label{uwur}
 u\leq w+2u_R\quad \text{in}\quad\Omega\cap B_{R}(0),
\end{align} for small $R.$

First, we compare the boundary value. By \eqref{ur} and \eqref{u-epsilon}
\begin{align}\label{Abdv}
u^\epsilon<w+2u_R\quad \text{on} \quad\partial(\Omega\cap B_{R}(0)).
\end{align}
We will prove
\begin{align}\label{uepsilonwur}
u^\epsilon\leq w+2u_R\quad \text{in}\quad\Omega\cap B_{R}(0)
\end{align}
 and finally let $\epsilon$ go to $0.$

 First, we take $R>0$ small such that $w+2u_R$ is large and
 \begin{align}\label{bd1}
  \frac{n(n+2)}{4}(w+2u_R)^\frac{4}{n-2}>C_L\quad \text{in}\quad B_{R}(0)\cap\Omega_*.
 \end{align}

Next, we discuss in $G=B_{R}(0)\cap\Omega\cap\{u^\epsilon>\big(\frac{4C_L}{n(n+2)}\big)^{\frac{n-2}{4}}\}.$

We compare the values of $F'(\xi_x)$ and $C_L$. In $G,$ $$\frac{n(n+2)}{4}(u^\epsilon)^\frac{4}{n-2}>C_L.$$
Then, by \eqref{bd1}, $F'(\xi_x)>C_L.$

On $\partial G,$ by \eqref{Abdv} and \eqref{bd1}, $u^\epsilon\leq w+2u_R. $

 Then we apply the maximum principle to $w+2u_R-u^\epsilon$ in $G$ and have $$w+2u_R-u^\epsilon\geq 0\quad\text{in }\quad G.$$

On the other hand, by \eqref{bd1},
\begin{align*}
    u^\epsilon\leq (\frac{4C_L}{n(n+2)})^{\frac{n-2}{4}}\leq w+2u_R\quad \text{in} \quad B_{R}(0)\cap\Omega\cap G^c.
\end{align*}
Then we have \eqref{uwur} by letting $\epsilon$ go to $0.$

Next, we note, in $B_{\frac{R}{2}}(0),$ it holds $u_{R}\leq C(n,R)$ and
by \eqref{uwtar-growth}, $u_*^{-1}\sim d^{\frac{n-2}{2}}.$
Then we have, for $\bar{R}\leq \frac{R}{2},$ by \eqref{uwur}, $$u-u_*\leq Cu_*|x|^\alpha+ Cu_*u_*^{-1}=Cu_*(|x|^\alpha+d^{\frac{n-2}{2}})\quad \text{in}\quad \Omega_*\cap B_{\bar{R}}(0).$$

For the other direction, the proof is similar and we only point out the key changes.
Set $w_1=u_*-A_1u_*^\beta-B_1u_*r^2 $ where $A_1$ and $B_1$ are two positive constants to be determined.
Then
\begin{align*}
  & \Delta w_1-\frac{n(n-2)}{4}w_1^{\frac{n+2}{n-2}}\\= & \Delta w_1-\frac{n(n-2)}{4}u_*^{\frac{n+2}{n-2}}(1-A_1u_*^{\beta-1}-B_1r^2)^{\frac{n+2}{n-2}}
\end{align*}
As mentioned in step 1, we take $R_1>0$ small enough compared to $A_1,B_1$ such that $A_1u_*^{\beta-1}+B_1r^2$, for $r\leq R_1,$ is sufficiently small. Then\begin{align*}
  & \Delta w_1-\frac{n(n-2)}{4}w_1^{\frac{n+2}{n-2}}\\ \geq & \Delta w_1-\frac{n(n-2)}{4}u_*^{\frac{n+2}{n-2}}[1-(1-c(n))\frac{n+2}{n-2}(A_1u_*^{\beta-1}+B_1r^2)]
\\ = &-A_1\Delta  (u_*^\beta)-B_1 \Delta (u_*r^2)+
(1-c(n))\frac{n(n+2)}{4}[A_1u_*^{\beta+\frac{4}{n-2}}+B_1r^2u_*^{\frac{n+2}{n-2}}]
\\ \geq&D_1+D_2+ D_3,\end{align*} where $c(n)$ is a small constant depending on $n$ such that $1-c(n)-\frac{n-2}{n+2}\beta>0,$ which is guaranteed by the smallness of $R_1$, and
\begin{align}\begin{split}
 D_1=&    \frac{n(n+2)}{4}(1-c(n)-\frac{n-2}{n+2}\beta)A_1u_*^{\beta+\frac{4}{n-2}}-A_1\beta(\beta-1)
u_*^{\beta-2}|\nabla u_*|^2,
\\  D_2=& \frac{n(n+2)}{4}(1-c(n)-\frac{n-2}{n+2}\beta)B_1r^2u_*^{\frac{n+2}{n-2}}
\\  D_3=& -4B_1r\nabla r\nabla u_*-B_12n u_*.\end{split}
\end{align} Then we can proceed as before. We have
 \begin{align*}
 w_1\leq u+2u_R\quad \text{in}\quad\Omega\cap B_{R}(0),
\end{align*}

\end{proof}

\begin{remark}\label{rek-sec3}
Suppose that $u_{**}$ is a positive solution to the following problem
\begin{align*}
   \Delta u_{**}=&\frac{n(n-2)}{4}u_{**}^\frac{n+2}{n-2}\quad \text{in}\quad \Omega\cap B_{1}(0),\\
   u_{**}=&+\infty\quad \text{on}\quad \partial\Omega\cap B_{1}(0).
\end{align*}
If we replace $u_{*}$ in \eqref{Ar} with $u_{**}$, then Lemma \ref{A} still holds.

\end{remark}
 The following corollary is a direct consequence of Lemma \ref{A} and the estimate \eqref{uwtar-growth} of $u_{*}$.
\begin{corollary}\label{corollary-estimate}
Let $\Omega$ be a Lipschitz domain and $\Omega_*$ be given in \eqref{omega-star}. Suppose that $u\in C^{2}(\Omega \bigcap B_1(0))$ is a positive solution of \eqref{uo}. Then there exists a positive constant $\delta$,
depending only on $n$, $C_L$ and $\|f\|_{C^{0,1}(B_{1}(0'))}$
such that,
\begin{equation}
C^{-1}
\leq d(x)^{\frac{n-2}{2}}u(x) \leq C \quad\quad\text{in}\quad \Omega\cap B_{\delta}(0), \end{equation}  where $d(x) $ is the distance from
$x$ to $\partial\Omega$, and $C$ is a positive constant depending only on $n$, $C_L$ and $\|f\|_{C^{0,1}(B_{1}(0'))}$.
\end{corollary}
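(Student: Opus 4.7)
The corollary drops out almost immediately from combining Lemma \ref{A} with the growth bound \eqref{uwtar-growth} on $u_*$, so the plan is essentially bookkeeping rather than new analysis.

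\textbf{Plan.} First I would invoke Lemma \ref{A} to compare $u$ with the model solution $u_*$ of \eqref{ustar}: there is a radius $\bar R>0$, depending only on $n$, $C_L$ and $\|f\|_{C^{0,1}}$, such that
\begin{equation*}
|u-u_*| \leq C\,u_*\,|x|^{\alpha} \qquad \text{in } \Omega_* \cap B_{\bar R}(0),
\end{equation*}
with $\alpha>0$ as in Lemma \ref{A}. Next, I would fix $\delta \leq \bar R$ so small that $C\delta^{\alpha} \leq \tfrac{1}{2}$. For every $x\in\Omega\cap B_\delta(0)$ this yields
\begin{equation*}
\tfrac{1}{2}\,u_*(x) \;\leq\; u(x) \;\leq\; \tfrac{3}{2}\,u_*(x).
\end{equation*}

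The second ingredient is the two-sided bound \eqref{uwtar-growth} from Lemma \ref{ustar-estimates}, namely
\begin{equation*}
C^{-1} \leq d(x)^{\frac{n-2}{2}} u_*(x) \leq 2^{\frac{n-2}{2}}
\quad\text{in } \Omega_* \cap B_{1/2}(0).
\end{equation*}
Shrinking $\delta$ further if necessary (so that $B_\delta(0)\subset B_{1/2}(0)$ and so that, for $x\in B_\delta(0)$, the distance to $\partial\Omega$ agrees with the distance to $\partial\Omega_*\cap B_1(0)$, which is automatic once $\delta$ is smaller than the distance from $0$ to $\partial B_1(0)$), multiplying the two displayed inequalities gives the desired
\begin{equation*}
C^{-1} \leq d(x)^{\frac{n-2}{2}} u(x) \leq C \qquad \text{in } \Omega\cap B_\delta(0),
\end{equation*}
with the new constant $C$ depending only on $n$, $C_L$ and $\|f\|_{C^{0,1}(B_1(0'))}$, as required.

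\textbf{Where the work is.} There is no real obstacle here: all the difficulty has already been absorbed into Lemma \ref{A}, whose proof constructed upper and lower barriers of the form $u_* \pm A u_*^\beta \pm B u_* r^2$ and compared them with $u$ via the maximum principle after the translation trick $u^\epsilon(x)=u(x',x_n+\epsilon)$. The only minor care point in writing up the corollary is to verify that the two notions of distance (to $\partial\Omega$ versus to $\partial\Omega_*\cap B_1(0)$) coincide on the small ball $B_\delta(0)$, which holds once $\delta$ is chosen small relative to $1$, and that the dependence of constants threads through correctly. The statement then follows in one or two lines from the two displayed inequalities above.
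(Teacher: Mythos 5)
Your proposal is correct and matches the paper's route exactly: the paper itself presents this corollary as ``a direct consequence of Lemma \ref{A} and the estimate \eqref{uwtar-growth}'', and your two-step argument (absorb the relative error $C|x|^{\alpha}$ by shrinking $\delta$, then multiply by the two-sided bound on $d^{\frac{n-2}{2}}u_*$) is precisely that, with the correct tracking of constants and of the two notions of distance near the origin.
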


\subsection{Existence and uniqueness results in $(M, g)$ }
We now prove the existence and uniqueness of positive solutions for \eqref{eq-MEq1}-\eqref{eq-MBoundary2} in bounded Lipschitz domains in $(M, g)$.

\begin{theorem}\label{thrm-Existence}
Suppose that $(M, g)$ is a compact Riemannian manifold without boundary of dimension $n\geq 3$.
Let $ \Omega \varsubsetneq M$ be a Lipshitz domain.
Then \eqref{eq-MEq1}-\eqref{eq-MBoundary2}
admits a unique positive solution $ u \in C^{ \infty}(\Omega)$.
\end{theorem}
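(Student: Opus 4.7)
The plan is to construct the solution by approximating with finite Dirichlet data, pass to the limit by means of the universal interior bound delivered by the super-solution $2u_R$ of Section \ref{sec-Basic Estimates}, and establish uniqueness via a comparison argument grounded in the boundary asymptotic of Lemma \ref{A}. For each positive integer $m$, I would first solve
\begin{equation*}
\Delta_g u_m - \tfrac{n-2}{4(n-1)} S_g u_m = \tfrac{n(n-2)}{4}\, u_m^{\frac{n+2}{n-2}} \ \text{in } \Omega, \qquad u_m = m \ \text{on } \partial\Omega,
\end{equation*}
by the sub-/super-solution monotone iteration, taking $\underline u_m \equiv 0$ (trivial sub-solution) and $\bar u_m$ equal to a constant $K_m \ge m$ chosen large enough that $\left|\tfrac{n-2}{4(n-1)} S_g\right| \le \tfrac{n(n-2)}{4} K_m^{4/(n-2)}$ on $\bar\Omega$ (so that $\bar u_m$ is a super-solution). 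Perron-type boundary regularity for Lipschitz domains yields $u_m \in C^\infty(\Omega) \cap C^0(\bar\Omega)$ with $u_m|_{\partial\Omega} = m$, and comparison via monotonicity of $t \mapsto t^{(n+2)/(n-2)}$ gives $u_m \le u_{m+1}$.

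The $m$-independent interior estimate is the heart of the argument. For any $x_0 \in \Omega$, I would work in $g$-normal coordinates centered at $x_0$: the standard expansions $g^{ij}(x) = \delta^{ij} + O(|x|^2)$ and $\Gamma^k_{ij}(x) = O(|x|)$ show that the operator $\Delta_g - \tfrac{n-2}{4(n-1)} S_g$ fits the form \eqref{L} and satisfies the structure condition \eqref{structurecondition}. Hence on any Euclidean ball $B_R(x_0)$ with $R \le R_*$, the function $2u_R$ of \eqref{ur}, which blows up on $\partial B_R(x_0)$, is a super-solution by \eqref{lur}, so $u_m \le 2u_R$ in $B_R(x_0)$, giving the universal bound
\begin{equation*}
u_m(x_0) \;\le\; C\, \min\!\bigl(R_*,\ d_g(x_0, \partial\Omega)\bigr)^{-(n-2)/2}
\end{equation*}
with $C = C(n, g)$. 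Interior Schauder estimates then upgrade this to uniform $C^{k,\alpha}_{\mathrm{loc}}(\Omega)$ bounds; monotone convergence produces $u := \lim_m u_m \in C^\infty(\Omega)$ solving \eqref{eq-MEq1}. The boundary blow-up follows from $u \ge u_m$ in $\Omega$ and the continuity of $u_m$ up to $\partial\Omega$, so that $\liminf_{x \to \partial\Omega} u(x) \ge m$ for every $m$.

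For uniqueness, let $u, v$ be two positive solutions. Lemma \ref{A} together with Remark \ref{rek-sec3}, applied in normal coordinates at each boundary point (where $L_g$ satisfies \eqref{structurecondition} and $\partial\Omega$ is locally a Lipschitz graph), gives the sharp asymptotic $u(x)\, d_g(x, \partial\Omega)^{(n-2)/2} \to 1$ and similarly for $v$; hence $u/v \to 1$ uniformly at $\partial\Omega$. For any $\lambda > 1$, superlinearity yields $L_g(\lambda v) = \lambda \cdot \tfrac{n(n-2)}{4} v^{(n+2)/(n-2)} < \tfrac{n(n-2)}{4}(\lambda v)^{(n+2)/(n-2)}$, so $\lambda v$ is a strict super-solution, and $\lambda v > u$ near $\partial\Omega$ by the asymptotic. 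The same two-region maximum-principle argument used in the proof of Lemma \ref{A} applies: in the region where $\lambda v$ is large enough that the linearization of the nonlinearity dominates $\tfrac{n-2}{4(n-1)} S_g$, the zeroth-order coefficient of the effective operator has the favorable sign; in the complementary set both $u$ and $\lambda v$ are bounded, and direct comparison gives $\lambda v \ge u$. Sending $\lambda \to 1^+$ and swapping $u \leftrightarrow v$ yields $u = v$.

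The main obstacle is the universal interior bound $u_m(x) \le C\, d_g(x, \partial\Omega)^{-(n-2)/2}$, which turns the crude approximation $u_m \le K_m$ (with $K_m$ growing in $m$) into an $m$-uniform estimate and thereby legitimizes the passage to the limit. This estimate relies on the perturbative structure of $L_g$ in normal coordinates via \eqref{structurecondition} --- the quasi-Euclidean nature of $(M, g)$ at small scales is essential --- together with the explicit Euclidean super-solution $2u_R$ of \eqref{ur}--\eqref{lur}. Everything else follows the Aviles--McOwen--Loewner--Nirenberg template, with Lemma \ref{A}'s Lipschitz-level boundary asymptotic supplying the delicate ingredient needed for uniqueness.
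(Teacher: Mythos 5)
Your existence construction is sound but takes a different exhaustion from the paper's: you approximate by finite boundary data $u_m=m$ on the fixed Lipschitz domain and control the limit with the explicit super-solution $2u_R$ of \eqref{ur}--\eqref{lur}, whereas the paper first normalizes $S_g$ to a constant, exhausts $\Omega$ by decreasing smooth domains $\Omega_i$, invokes Aviles--McOwen on each $\Omega_i$, bounds $u_i(x)$ above by the solution in a small geodesic ball $B_\varepsilon(x)\subset\Omega$, and reads off the boundary condition from Corollary \ref{corollary-estimate} rather than from $u\ge u_m$. Either route is acceptable in principle; the problem lies in your comparison steps.

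The genuine gap appears wherever you compare two solutions and $\min_M S_g<0$. The difference $h$ of a super- and a sub-solution satisfies $\Delta_g h-\bigl[\tfrac{n-2}{4(n-1)}S_g+\tfrac{n(n+2)}{4}\xi^{4/(n-2)}\bigr]h\le 0$ with $\xi$ between the two functions, and the zeroth-order coefficient has the sign required by the maximum principle only where $\xi\ge\bigl(\tfrac{(n-2)(-S_g)}{n(n+2)(n-1)}\bigr)^{\frac{n-2}{4}}$. In your uniqueness argument the bad set is $\{u>\lambda v\}$, where $\xi\ge\lambda v$, so you must show $\lambda v$ exceeds this threshold throughout $\Omega$, not merely near $\partial\Omega$. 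Your sentence ``in the complementary set both $u$ and $\lambda v$ are bounded, and direct comparison gives $\lambda v\ge u$'' is not a valid deduction: boundedness gives no ordering, and the two-region trick of Lemma \ref{A} works only because the super-solution $w+2u_R$ is arranged to be uniformly large on a small ball, which $\lambda v$ need not be deep inside $\Omega$. The missing ingredient is the a priori algebraic lower bound $v\ge\bigl(\tfrac{-S_g}{n(n-1)}\bigr)^{\frac{n-2}{4}}$ valid for every positive solution of \eqref{eq-MEq1}--\eqref{eq-MBoundary2}; the paper imports exactly this as \eqref{eq-AlgebraicRelation} from \cite{HanShen3}, and with it the coefficient above is strictly positive wherever $\xi\ge v$, so your $\lambda$-scaling argument closes. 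The same sign issue is silently skipped in your claim $u_m\le u_{m+1}$ ``via monotonicity of $t\mapsto t^{(n+2)/(n-2)}$''; there the cheap fix is to define $u_{m+1}$ as the maximal solution produced by the downward iteration, which automatically dominates the sub-solution $u_m$. For the record, the paper's uniqueness proof instead establishes $u\le v$ for the constructed minimal solution and applies the strong maximum principle to $w=v/u-1$, using Lemma \ref{A} to get $w\to0$ at $\partial\Omega$, in the same spirit as your use of Lemma \ref{A} and Remark \ref{rek-sec3} to get $u/v\to1$.
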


\begin{proof}
Without loss of generality, we assume $S_g$ is constant on $M$.

First, we prove the existence. If $\Omega$ is a smooth domain, according to \cite{AM1988DUKE}, or the recent work \cite{L},
\eqref{eq-MEq1}-\eqref{eq-MBoundary2} admits a unique positive solution.
Let $\{\Omega_i\}$ be a sequence of decreasing smooth domains in $M$ which converges to $\Omega$ with $\{\partial \Omega_i\}$ possessing uniform Lipschitz constant, and $u_i$ be the positive solution of \eqref{eq-MEq1}-\eqref{eq-MBoundary2}
in $\Omega_i$. If $S_g\geq0$, we can apply the maximum principle.
If $S_{g}<0$, by $(4.3)$ in \cite{HanShen3}:
\begin{equation}\label{eq-AlgebraicRelation}u_i\geq
\Big(\frac{-S_g}{n(n-1)}\Big)^{\frac{n-2}{4}}
\quad\text{in }\Omega_i.\end{equation}
Then we can also apply the maximum principle to obtain $u_i\leq u_{i+1}$ in $\Omega$.  For any $x\in\Omega$, take $ \varepsilon>0$
small such that $\overline{B_{\varepsilon}(x)} \subseteq\Omega$. Let $u_x$ be the positive solution of \eqref{eq-MEq1}-\eqref{eq-MBoundary2} in $B_{\varepsilon}(x)$. By the maximum principle, for any $i$, $u_{i}(x)\leq u_x(x)$.

Then, it is straightforward to verify, for any positive integer $m$,
$$u_i\rightarrow u\quad\text{in }C^{m}_{\mathrm{loc}}( \Omega)
\text{ as }i\rightarrow\infty,$$
for some $u\in C^{\infty}(\Omega)$, $u>0$ in $\Omega$ and $u$ satisfies \eqref{eq-MEq1}. The boundary condition \eqref{eq-MBoundary2} can be verified by using Corollary \ref{corollary-estimate}.

Next, we prove the uniqueness. Let $v$ be a positive solution of \eqref{eq-MEq1}-\eqref{eq-MBoundary2}. By the maximum principle,
$u_i\leq v$ in $\Omega$. Hence, $u\leq v$ in $\Omega$. Set $w=\frac{v}{u}-1$. Then, $w\geq0$ in $\Omega$ and $w$ solves
 $$\Delta_{g} w+2\frac{\nabla_g w \nabla_g u}{u}= \frac14n(n-2) u^{\frac{4}{n-2}}\big[(1+w)^{\frac{n+2}{n-2}}-(1+w)\big].$$
 Also, we can check $w(x)\rightarrow0$ as $x\rightarrow\partial\Omega$
by Lemma \ref{A}.

By the strong maximum principle, we have $w=0$ in $\Omega$.
\end{proof}

\section{Eigenvalue and Eigenfunction}\label{sec-eigenvalue}
In this section, we study the first eigenvalue and the corresponding eigenfunction of a specific
elliptic operator on spherical domains with coefficients singular on the boundaries.
 The lower bound of the first eigenvalue plays an important role in this paper.

Let ${V}$ be a Lipschitz infinite cone in $\mathbb R ^{n}$ over a domain $\Sigma\subsetneq\mathbb S^{n-1}$.
Let $u_V\in C^\infty(V)$ be the unique positive solution of \eqref{eq-MainEq}-\eqref{eq-MainBoundary}. Then, by \cite{HanJiangShen}, $$ u_{V}=|x|^{-\frac{n-2}{2}}g(\theta)=
r^{-\frac{n-2}{2}}g(\theta),$$ where $g>0$ in $V\cap \mathbb{S}^{n-1}=\Sigma$ is the solution of \eqref{eq-LN-spherical-domain-eq}-\eqref{eq-LN-spherical-domain-boundary}.
Set
$\rho=g^{-\frac{2}{n-2}}$. Then
\begin{align}\label{uvrphojili66}
u_{V}=r^{-\frac{n-2}{2}}\rho^{-\frac{n-2}{2}}.
\end{align}
It was proved in \cite{HanJiangShen} that $\rho\in C^\infty(\Sigma)\cap \mathrm{Lip}(\Sigma)$ and $|\nabla_\theta \rho|\le C$, where the constant $C$ depends only on $n$ and the size of exterior cone of $V$, and hence $C$ depends only on $n$ and $\Sigma$. Denote $d_{g_{\mathbb{S}^{n-1}}}(x,\partial \Sigma)$ by $d_{\Sigma}$.
Then by Lemma 2.4 in \cite{HanJiangShen}, $\rho$ satisfying
\begin{equation}\label{eq-estimate-rho-upper-lower}
c_1\le \frac{\rho}{d_{\Sigma}}\le c_2\quad\text{on }\Sigma,\end{equation}
for some positive constants $c_1$ and $c_2$ depending only on $n$ and $\Sigma$.

We now consider the eigenvalue problem of a specific
elliptic operator on spherical domains with coefficients singular on the boundary which is related to the linearized operator of the equation  \eqref{eq-MainEq}.

By \cite{HanJiangShen}, there exists a function $\phi_1\in H_0^1(\Sigma)\cap C^\infty(\Sigma)\cap C(\overline{\Sigma})$ satisfying $\phi_1>0$ in $\Sigma$ with $\|\phi\|_{L^2(\Sigma)}=1$ and
\begin{equation}\label{eq-def-L-Sigma}
L_{\Sigma}\phi_1:=-\Delta_\theta \phi_1+\frac{n(n+2)}{4\rho^2}\phi_1=\lambda_1 (L_{\Sigma})\phi_1,\end{equation}
in $\Sigma$, where $\Delta_\theta$ is the Laplacian operator with respect to $g_{\mathbb{S}^{n-1}}$ and
\begin{align}\label{1st-eigenvalue}\begin{split}
\lambda_1(L_{\Sigma})=&\inf_{\phi\in H_0^1(\Sigma),\phi\neq 0}\frac{\int_{\Sigma}\big(|\nabla_\theta \phi|^2+\frac{n(n+2)}{4\rho^2}\phi^2\big)d\theta}{\int_{\Sigma}\phi^2 d\theta}\\
=&\inf_{\phi\in H_0^1(\Sigma),\|\phi\|_{L^2(\Sigma)}=1}\int_{\Sigma}\big(|\nabla_\theta \phi|^2+\frac{n(n+2)}{4\rho^2}\phi^2\big)d\theta
\end{split}\end{align}
is the first eigenvalue of $L_{\Sigma}$.

Concerning the derivatives of $\phi_1$, we have the following lemma.

\begin{lemma}\label{lemma-phi-estimate}
Let $\Sigma$ be a Lipschitz domain in $\mathbb{S}^{n-1}$.
Suppose $\phi_1\in H_0^1(\Sigma)\cap C^\infty(\Sigma)\cap C(\overline{\Sigma})$ is positive in $\Sigma$ and satisfies
\eqref{eq-def-L-Sigma}.
Then, there exists a constant $\nu>0$ depending only on $n$ and $\Sigma$ such that
\begin{equation}\label{eq-Derivative}
\rho(x)|\nabla_{\theta} \phi_1(x)|+\rho^2(x)|\nabla_{\theta}^2 \phi_1(x)|\le C\rho^{\nu},\end{equation} where $C$ is a constant depending only on $n$ and $\Sigma$.
\end{lemma}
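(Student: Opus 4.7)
My plan is to establish \eqref{eq-Derivative} in two stages: a barrier argument giving the pointwise H\"older decay $\phi_1\le C\rho^\nu$ near $\partial\Sigma$, followed by an interior rescaling argument which converts this pointwise bound into the claimed estimates on $\nabla_\theta\phi_1$ and $\nabla_\theta^2\phi_1$.

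For the barrier step, substituting $g=\rho^{-(n-2)/2}$ into \eqref{eq-LN-spherical-domain-eq} and computing directly yields the identity
\begin{equation*}
\rho\,\Delta_\theta\rho=\frac{n}{2}|\nabla_\theta\rho|^2-\frac{n-2}{2}\rho^2-\frac{n}{2}\quad\text{in }\Sigma.
\end{equation*}
Plugging this into
$L_\Sigma\rho^\nu=\rho^{\nu-2}\big[\tfrac{n(n+2)}{4}-\nu\rho\,\Delta_\theta\rho-\nu(\nu-1)|\nabla_\theta\rho|^2\big]$
and using the bound $|\nabla_\theta\rho|\le C$ from \cite{HanJiangShen}, I can fix $\nu\in(0,1)$ sufficiently small, depending only on $n$ and $\Sigma$, so that $L_\Sigma\rho^\nu\ge c_0\rho^{\nu-2}$ throughout $\Sigma$ for some $c_0>0$. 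Choosing $\delta>0$ so small that both $\lambda_1\rho^\nu\le c_0\rho^{\nu-2}$ and $\tfrac{n(n+2)}{4\rho^2}>\lambda_1$ hold on $U_\delta:=\{\rho<\delta\}$, one has $(L_\Sigma-\lambda_1)\rho^\nu\ge 0$ in $U_\delta$ and the operator $L_\Sigma-\lambda_1$ satisfies the standard maximum principle there (positive zeroth-order coefficient). Taking $A$ so large that $A\delta^\nu\ge\max_\Sigma\phi_1$, and noting $\phi_1=0=\rho^\nu$ on $\partial\Sigma$, the comparison function $A\rho^\nu-\phi_1$ is nonnegative on $\partial U_\delta$ and hence throughout $U_\delta$. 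This gives $\phi_1\le C\rho^\nu$ on $\Sigma$ after enlarging $C$ to absorb $\max\phi_1/\delta^\nu$ on the complement.

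For the rescaling step, fix $x_0\in\Sigma$ with $d:=\rho(x_0)$ small. The Lipschitz bound $|\nabla_\theta\rho|\le C$ together with \eqref{eq-estimate-rho-upper-lower} ensures that the geodesic ball $B_{c_1d}(x_0)\subset\mathbb{S}^{n-1}$ lies in $\Sigma$ and $\rho$ is comparable to $d$ there. In normal coordinates centered at $x_0$, set $\tilde\phi(y):=d^{-\nu}\phi_1(\exp_{x_0}(dy))$ for $y\in B_{c_1}(0)\subset\mathbb{R}^{n-1}$. The equation \eqref{eq-def-L-Sigma} rescales to a uniformly elliptic equation on $B_{c_1}(0)$ whose coefficients are smooth and bounded independently of $d$: the singular potential becomes $\tfrac{n(n+2)}{4}\bigl(\rho(\exp_{x_0}(dy))/d\bigr)^{-2}$, uniformly bounded above and below, while the eigenvalue term is $d^2\lambda_1$, which is small. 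Step 1 yields $\|\tilde\phi\|_{L^\infty(B_{c_1}(0))}\le C$, so interior Schauder estimates give $\|\tilde\phi\|_{C^2(B_{c_1/2}(0))}\le C$. Unrescaling produces $|\nabla_\theta\phi_1(x_0)|\le Cd^{\nu-1}$ and $|\nabla_\theta^2\phi_1(x_0)|\le Cd^{\nu-2}$, which together with $\rho(x_0)\asymp d$ is precisely \eqref{eq-Derivative}.

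The delicate point is Step 1: one must ensure that the bracket in $L_\Sigma\rho^\nu$ stays strictly positive uniformly up to $\partial\Sigma$, even though $|\nabla_\theta\rho|^2$ enters with an unfavorable sign and is only known to be bounded, not to admit a specific boundary limit. It is exactly the identity for $\rho\Delta_\theta\rho$ derived from \eqref{eq-LN-spherical-domain-eq} that cancels the indefinite $|\nabla_\theta\rho|^2$ contribution against part of the potential $\tfrac{n(n+2)}{4}$ and leaves a strictly positive remainder for all sufficiently small $\nu>0$; this is also the reason that the exponent $\nu$ in \eqref{eq-Derivative} is explicit but not sharp.
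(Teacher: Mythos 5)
Your proposal is correct, and its second stage (rescaled interior estimates on balls of radius comparable to $d_\Sigma(x_0)\asymp\rho(x_0)$, converting the pointwise decay $\phi_1\le C\rho^\nu$ into the gradient and Hessian bounds) is essentially identical to the paper's argument, which applies scaled interior Schauder-type estimates on $B_{d_\Sigma/4}(x)$ and $B_{d_\Sigma/8}(x)$. The genuine difference is in the first stage: the paper simply cites Theorem 4.4 of \cite{HanJiangShen} for the bound $\phi_1\le C\rho^\nu$, whereas you derive it from scratch with a barrier. Your barrier computation checks out: the identity $\rho\Delta_\theta\rho=\tfrac{n}{2}|\nabla_\theta\rho|^2-\tfrac{n-2}{2}\rho^2-\tfrac{n}{2}$ does follow from \eqref{eq-LN-spherical-domain-eq} with $g=\rho^{-(n-2)/2}$, and substituting it into $L_\Sigma\rho^\nu$ leaves the bracket $\tfrac{n(n+2)}{4}+\tfrac{n\nu}{2}+\tfrac{(n-2)\nu}{2}\rho^2-\nu(\tfrac{n}{2}+\nu-1)|\nabla_\theta\rho|^2$, which is bounded below by a positive constant for $\nu$ small thanks to the Lipschitz bound on $\rho$; the maximum principle for $L_\Sigma-\lambda_1$ on $\{\rho<\delta\}$ then applies since the zeroth-order coefficient is positive there and $\phi_1$ vanishes on $\partial\Sigma$ (being in $H_0^1(\Sigma)\cap C(\overline\Sigma)$ over a Lipschitz domain). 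What your route buys is self-containedness and an explicit mechanism for why a positive $\nu$ exists; what the paper's route buys is brevity and, via the cited theorem, a $\nu$ tied to the geometry of $\Sigma$ without redoing the boundary analysis. Either way the exponent is not claimed to be sharp, so the two are interchangeable for the purposes of this lemma.
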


\begin{proof}

By Theorem 4.4 in \cite{HanJiangShen},   $\phi_1\le C\rho^{\nu}\leq C d_{\Sigma}^{\nu}$ for some positive constant $\nu$ depending only on $n$ and $\Sigma$.
 We take an arbitrary $\beta\in (0,1)$. By the standard
interior estimates, we have, for any $x\in \Sigma$,
 \begin{align*}&d_{\Sigma}(x)^{\beta}[\phi_1]_{C^{\beta}(B_{{d_{\Sigma}}/{4}}(x))}+d_{\Sigma}(x)[\nabla_{\theta} \phi_1]_{L^{\infty}(B_{{d_{\Sigma}}/{4}}(x))}
+d_{\Sigma}(x)^{1+\beta}[\nabla_{\theta} \phi_1]_{C^{\beta}(B_{{d_{\Sigma}}/{4}}(x))}\\
&\leq
C\{|\phi_1|_{L^{\infty}(B_{{d_{\Sigma}}/{2}}(x))}
\}\le C\rho^{\nu},\end{align*}
and
$$\aligned &d_{\Sigma}(x)^{2}|\nabla^{2}_{\theta}\phi_1|_{L^{\infty}(B_{{d_{\Sigma}}/{8}}(x))}+d_{\Sigma}(x)^{2+\beta}[\nabla^2_{\theta} \phi_1]_{C^{\beta}(B_{{d_{\Sigma}}/{8}}(x))}\\
&\qquad\leq
C\{|\phi_1|_{L^{\infty}(B_{{d_{\Sigma}}/{4}}(x))}+d_{\Sigma}(x)^{\beta}[\phi_1]_{C^{\beta}(B_{{d_{\Sigma}}/{4}}(x))}\leq C\rho^{\nu},\endaligned$$
where $C$ is a positive constant depending only on $n$, $\beta$ and $\Sigma$.
\end{proof}

By a direct computation, we have the following estimates.

\begin{corollary}
Let ${V}$ be a Lipschitz infinite cone in $\mathbb R ^{n}$ over a domain $\Sigma\subsetneq\mathbb S^{n-1}$.
Then for any constant $\alpha>0$ and $x\in V\cap B_1(0)$, we have
\begin{align*}
|\nabla(|x|^{\alpha}\phi_1)|&\leq C |x|^{\alpha-1}\rho^{\nu-1},\\
|\nabla^2(|x|^{\alpha}\phi_1)|&\leq  C |x|^{\alpha-2}\rho^{\nu-2},
\end{align*}
where $C$ is some constant depending only on $n$, $\alpha$ and $\Sigma$.
\end{corollary}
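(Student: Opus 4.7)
The proof will be a direct chain-rule computation in polar coordinates, treating $f(x):=|x|^{\alpha}\phi_1(\theta)$ (with $\theta=x/|x|$) as a separable function in $r=|x|$ and $\theta\in\Sigma$. The entire estimate is driven by Lemma \ref{lemma-phi-estimate}, together with the fact that $\rho$ is uniformly bounded above on $\Sigma$ (which follows from \eqref{eq-estimate-rho-upper-lower} and the compactness of $\overline{\Sigma}\subset\mathbb{S}^{n-1}$, giving $\rho\le c_2\,\mathrm{diam}(\Sigma)$). This upper bound on $\rho$ is what lets us absorb the exponents: $\rho^{\nu}\le C\rho^{\nu-1}$ and $\rho^{\nu-1}\le C\rho^{\nu-2}$ pointwise on $\Sigma$.

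First I would establish the gradient bound. Using the polar decomposition $\nabla_x F=(\partial_r F)\,\theta+\frac{1}{r}\nabla_\theta F$ valid for any smooth $F$ on $V\setminus\{0\}$, and noting that for $F=r^\alpha\phi_1(\theta)$ we have $\partial_r F=\alpha r^{\alpha-1}\phi_1$ and $\nabla_\theta F=r^{\alpha}\nabla_\theta \phi_1$, it follows that
\[
|\nabla_x f|\le \alpha r^{\alpha-1}|\phi_1|+r^{\alpha-1}|\nabla_\theta \phi_1|.
\]
Now Lemma \ref{lemma-phi-estimate} gives $|\phi_1|\le C\rho^{\nu}$ and $|\nabla_\theta\phi_1|\le C\rho^{\nu-1}$. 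Since $\rho\le C$ on $\Sigma$, the first term is dominated by $C r^{\alpha-1}\rho^{\nu-1}$, which establishes the first inequality.

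Next I would handle the Hessian. In polar coordinates each Cartesian second derivative is a bounded linear combination of the four building blocks $\partial_{rr}F$, $\tfrac{1}{r}\partial_r F$, $\tfrac{1}{r}\nabla_\theta\partial_r F$, and $\tfrac{1}{r^2}\nabla^2_\theta F$ (this can also be verified cleanly by differentiating $\partial_i f = \alpha r^{\alpha-2}x_i\phi_1+r^{\alpha-2}\sum_j(\partial_j\phi_1)(r\delta_{ij}-x_ix_j/r)$ once more and grouping by powers of $r$). For $F=r^\alpha\phi_1$, these four blocks equal $\alpha(\alpha-1)r^{\alpha-2}\phi_1$, $\alpha r^{\alpha-2}\phi_1$, $\alpha r^{\alpha-2}\nabla_\theta\phi_1$, and $r^{\alpha-2}\nabla^2_\theta\phi_1$, respectively. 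Hence
\[
|\nabla_x^2 f|\le C r^{\alpha-2}\bigl(|\phi_1|+|\nabla_\theta\phi_1|+|\nabla^2_\theta\phi_1|\bigr)
\le C r^{\alpha-2}\bigl(\rho^{\nu}+\rho^{\nu-1}+\rho^{\nu-2}\bigr)\le C r^{\alpha-2}\rho^{\nu-2},
\]
using Lemma \ref{lemma-phi-estimate} in the middle step and the boundedness of $\rho$ in the last step.

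This is essentially bookkeeping with the chain rule, so there is no substantive obstacle; the only points requiring a small amount of care are the polar decomposition of the Cartesian Hessian and the observation that the uniform upper bound on $\rho$ (rather than a lower bound, which is false near $\partial\Sigma$) is what allows one to collapse the three powers $\rho^{\nu},\rho^{\nu-1},\rho^{\nu-2}$ into the single worst power $\rho^{\nu-2}$.
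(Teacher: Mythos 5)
Your proof is correct and is exactly the direct polar-coordinate computation the paper has in mind (the paper offers no written proof, only the phrase ``by a direct computation''), with the key inputs being Lemma \ref{lemma-phi-estimate} together with $\phi_1\le C\rho^{\nu}$ and the uniform upper bound $\rho\le C$ on $\Sigma$. The only cosmetic slip is that the polar decomposition of the Cartesian Hessian also contains a $\tfrac{1}{r^2}\nabla_\theta F$ block, but for $F=r^{\alpha}\phi_1$ this equals $r^{\alpha-2}\nabla_\theta\phi_1$ and is absorbed by the same estimate, so the conclusion is unaffected.
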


The following are some properties of $\lambda_1(L_{\Sigma})$.
\begin{prop}\label{eigen-comparision}
Let $\Sigma_1, \Sigma_2 \subsetneq\mathbb S^{n-1}$ be two Lipschitz domains with $\Sigma_1\subsetneqq\Sigma_2 $. Then
$\lambda_1(L_{\Sigma_1})> \lambda_1(L_{\Sigma_2}).$
\end{prop}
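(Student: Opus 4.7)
The plan is to exploit the variational characterization \eqref{1st-eigenvalue} of $\lambda_{1}(L_{\Sigma})$ together with a monotonicity property of the weight $\rho$. For $\Sigma_{1}\subsetneqq\Sigma_{2}$ I expect $\rho_{\Sigma_{1}}\leq\rho_{\Sigma_{2}}$ on $\Sigma_{1}$, which makes the potential $n(n+2)/(4\rho^{2})$ \emph{larger} on the smaller domain. Denoting by $\phi_{1}^{(1)}\in H_{0}^{1}(\Sigma_{1})$ the normalized positive first eigenfunction of $L_{\Sigma_{1}}$ and by $\tilde{\phi}$ its extension by zero to $\Sigma_{2}$, which lies in $H_{0}^{1}(\Sigma_{2})$ with unit $L^{2}$-norm, I plug $\tilde{\phi}$ into the Rayleigh quotient for $L_{\Sigma_{2}}$ to obtain
\begin{equation*}
\lambda_{1}(L_{\Sigma_{2}})\;\leq\;\int_{\Sigma_{1}}\Bigl(|\nabla_{\theta}\phi_{1}^{(1)}|^{2}+\tfrac{n(n+2)}{4\rho_{\Sigma_{2}}^{2}}(\phi_{1}^{(1)})^{2}\Bigr)d\theta\;\leq\;\lambda_{1}(L_{\Sigma_{1}}),
\end{equation*}
where the second inequality uses the monotonicity $\rho_{\Sigma_{2}}\geq\rho_{\Sigma_{1}}$ on $\Sigma_{1}$.

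For the monotonicity itself, which is equivalent to $g_{\Sigma_{1}}\geq g_{\Sigma_{2}}$ on $\Sigma_{1}$, I appeal to the comparison principle for the singular Yamabe equation \eqref{eq-LN-spherical-domain-eq}--\eqref{eq-LN-spherical-domain-boundary}. On $\partial\Sigma_{1}\cap\Sigma_{2}$ the function $g_{\Sigma_{1}}$ blows up while $g_{\Sigma_{2}}$ is bounded, and on $\partial\Sigma_{1}\cap\partial\Sigma_{2}$ both blow up with matching Loewner--Nirenberg leading order dictated by $d_{\Sigma}^{-(n-2)/2}$. Since the nonlinearity $g\mapsto \tfrac{1}{4}n(n-2)g^{(n+2)/(n-2)}$ is increasing, the standard trick of comparing $(1+\varepsilon)g_{\Sigma_{2}}$ with $g_{\Sigma_{1}}$ on $\Sigma_{1}$ and letting $\varepsilon\to 0$ gives $g_{\Sigma_{1}}\geq g_{\Sigma_{2}}$ throughout $\Sigma_{1}$, hence $\rho_{\Sigma_{1}}\leq \rho_{\Sigma_{2}}$ there.

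Finally, to promote the non-strict bounds to the strict inequality demanded by the proposition, I invoke the strong maximum principle: the first eigenfunction of $L_{\Sigma_{2}}$ is strictly positive throughout $\Sigma_{2}$, whereas $\tilde{\phi}$ vanishes identically on the nonempty open set $\Sigma_{2}\setminus\overline{\Sigma_{1}}$ (which has positive measure since $\Sigma_{1}\subsetneqq\Sigma_{2}$ and both are Lipschitz domains). Hence $\tilde{\phi}$ cannot be a minimizer of the Rayleigh quotient on $\Sigma_{2}$, so the first inequality in the display above is strict, giving $\lambda_{1}(L_{\Sigma_{1}})>\lambda_{1}(L_{\Sigma_{2}})$. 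The main delicate step is the boundary comparison on $\partial\Sigma_{1}\cap\partial\Sigma_{2}$, where both solutions blow up and the usual weak maximum principle does not apply directly; this is where the precise Loewner--Nirenberg asymptotics are needed, but the required argument is by now standard in the singular Yamabe literature (cf.\ \cite{AM1988DUKE}).
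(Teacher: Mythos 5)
Your argument follows essentially the same route as the paper's: establish $g_{\Sigma_1}\ge g_{\Sigma_2}$ (hence $\rho_{\Sigma_1}\le\rho_{\Sigma_2}$) on $\Sigma_1$ by comparison, then insert the zero-extension of the first eigenfunction of $L_{\Sigma_1}$ into the Rayleigh quotient \eqref{1st-eigenvalue} for $\Sigma_2$. Where you obtain strictness differs slightly: you argue that $\tilde\phi$ cannot attain the infimum for $\Sigma_2$ because it vanishes on the nonempty open set $\Sigma_2\setminus\overline{\Sigma_1}$ (this quietly uses that minimizers exist and are, up to sign, the everywhere-positive first eigenfunction, which \cite{HanJiangShen} provides); the paper instead upgrades the comparison to the \emph{strict} inequality $\rho_{\Sigma_1}<\rho_{\Sigma_2}$ in $\Sigma_1$ via the strong maximum principle and gets strictness from your second inequality, using only $\phi_1^{(1)}>0$. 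Both mechanisms are valid. One slip to correct in the comparison step: as written, testing $(1+\varepsilon)g_{\Sigma_2}$ against $g_{\Sigma_1}$ goes the wrong way, since $(1+\varepsilon)g_{\Sigma_2}$ is a supersolution that stays finite on $\partial\Sigma_1\cap\Sigma_2$ while $g_{\Sigma_1}=+\infty$ there, so it does not dominate $g_{\Sigma_1}$ on the boundary and the maximum principle yields nothing. You should instead compare $g_{\Sigma_2}$ against the supersolution $(1+\varepsilon)g_{\Sigma_1}$, or compare the subsolution $(1-\varepsilon)g_{\Sigma_2}$ against $g_{\Sigma_1}$, using the matching leading asymptotics on $\partial\Sigma_1\cap\partial\Sigma_2$ and the blow-up of $g_{\Sigma_1}$ on $\partial\Sigma_1\cap\Sigma_2$; alternatively, compare $g_{\Sigma_1}$ with the bounded approximations of $g_{\Sigma_2}$ having finite boundary data $m$ and let $m\to\infty$, which avoids the common-boundary delicacy entirely. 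With that correction the proof is complete.
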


\begin{proof}
Let $g_{i}$ be the solution of \eqref{eq-LN-spherical-domain-eq}-\eqref{eq-LN-spherical-domain-boundary} for $\Sigma=\Sigma_i$, $i=1,2$.
Set $\rho_i=g_i^{-\frac{2}{n-2}}$, $i=1,2$.
By the strong maximum principle, we have $g_{1}(\theta)>g_{2}(\theta)$ in $\Sigma_1$ and hence $\rho_{1}(\theta)<\rho_{2}(\theta)$. Then the desired result follows by \eqref{1st-eigenvalue}.

\end{proof}

In order to estimate the lower bound of $\lambda_1(L_{\Sigma})$. We need the following Lemma.

\begin{lemma}\label{gap}
Let $V$ be a Lipschitz infinite cone in $\mathbb R ^{n}$ over a domain $\Sigma\subsetneq\mathbb S^{n-1}$,  $u_{V}$ be the solution
of \eqref{eq-MainEq}-\eqref{eq-MainBoundary} for $\Omega=V$ and $\underline{u}$ be the solution
of \eqref{eq-MainEq}-\eqref{eq-MainBoundary} for $\Omega=\mathbb R^{n}\setminus \big(\overline{V^c\cap B_1(0)}\big)$. Then, there exists a positive
constant $c$ such that
 \begin{equation*}
\underline{u} \leq u_{V}-c|x|^{-\frac{n-2}{2}+\mu_1}\phi_1,   \quad\text{in }\, V\cap  B_{2}(0),
\end{equation*}
where $\mu_1=\sqrt{(\frac{n-2}{2})^2+\lambda_1(L_{\Sigma})}$ and $\phi_1>0$ is the first eigenfunction of $L_{\Sigma}$.
\end{lemma}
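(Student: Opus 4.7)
The strategy is to use linearization: the function $\eta(x) := |x|^{-(n-2)/2 + \mu_1}\phi_1(\theta)$ is an explicit positive solution of the linearized Loewner--Nirenberg equation at $u_V$, and $w := u_V - c\eta$ will be a supersolution of \eqref{eq-MainEq} for small $c > 0$, after which a maximum principle argument yields $\underline u \leq w$ in $V \cap B_2(0)$.

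The first step is the linearized identity. Writing $x = r\theta$, from the polar decomposition $\Delta(r^\alpha \phi_1) = r^{\alpha-2}[\alpha(\alpha + n - 2)\phi_1 + \Delta_\theta \phi_1]$, the eigenvalue equation \eqref{eq-def-L-Sigma}, and the identity $u_V^{4/(n-2)} = r^{-2}\rho^{-2}$ from \eqref{uvrphojili66}, the choice $\alpha = -(n-2)/2 + \mu_1$ yields $\alpha(\alpha + n - 2) = \mu_1^2 - (n-2)^2/4 = \lambda_1(L_\Sigma)$ by definition of $\mu_1$, and hence
\[
\Delta\eta - \tfrac{n(n+2)}{4}u_V^{4/(n-2)}\eta = 0 \quad \text{in } V.
\]
By Lemma \ref{lemma-phi-estimate} and \eqref{eq-estimate-rho-upper-lower}, $\eta/u_V = r^{\mu_1}\phi_1\rho^{(n-2)/2}$ is bounded on $V \cap B_2(0)$, so for $c$ small $w > 0$ there. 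Writing $p = (n+2)/(n-2) > 1$ and using the convexity inequality $(1-t)^p \geq 1 - pt$ for $t = c\eta/u_V \in [0,1)$,
\[
\Delta w - \tfrac{n(n-2)}{4}w^p \leq -c\bigl(\Delta\eta - \tfrac{n(n+2)}{4}u_V^{4/(n-2)}\eta\bigr) = 0,
\]
so $w$ is a supersolution in $V \cap B_2(0)$.

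Setting $\psi := \underline u - w$, the equations for $\underline u$ and $w$ together with the mean value theorem give $\Delta\psi \geq \tfrac{n(n+2)}{4}\zeta^{4/(n-2)}\psi$ for some $\zeta \geq 0$ between the two, and since the zeroth-order coefficient is nonnegative the maximum principle rules out a positive interior maximum. It suffices to check $\psi \leq 0$ on $\partial(V \cap B_2(0))$. On $V \cap \partial B_2(0)$, which lies in the interior of the domain of $\underline u$, $\underline u$ is smooth and bounded while $u_V \to +\infty$ as $\theta \to \partial\Sigma$; hence $u_V - \underline u$ has a positive (possibly $+\infty$) lower bound on the compact closure of this piece, and shrinking $c$ delivers $\psi \leq 0$ there. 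On $\partial V \cap (B_2(0) \setminus \overline{B_1(0)})$, $\underline u$ is finite while $u_V \to +\infty$ and $\eta \to 0$, so $\psi \to -\infty$.

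The main obstacle lies on $\partial V \cap \overline{B_1(0)} \setminus \{0\}$ together with the vertex $0$: there both $u_V$ and $\underline u$ blow up at the common leading rate $d^{-(n-2)/2}$ (Corollary \ref{corollary-estimate}) while $\eta$ vanishes, so one must rule out the possibility that $v := u_V - \underline u$ closes faster than $c\eta$ at the boundary. I would handle this by the translation trick from the proof of Lemma \ref{A}: replace $\underline u$ by a translate $\underline u^\varepsilon(x) := \underline u(x + \varepsilon e)$ with $e$ pointing into $V$, which makes $\underline u^\varepsilon$ smooth and finite on the relevant boundary portion; run the above comparison on a set where all functions are bounded; and pass to the limit $\varepsilon \to 0$. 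To control the limit near the vertex, one uses the sharp asymptotic $|u_V - \underline u| \leq C u_V |x|^\beta$ (in the spirit of Lemma \ref{A} applied with $u_V$ as the reference) to guarantee that the gap dominates $c\eta \sim |x|^{-(n-2)/2 + \mu_1}\phi_1$ for suitably small $c$.
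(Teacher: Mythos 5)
Your first two paragraphs reproduce the paper's argument exactly: the identity $\Delta\eta=\frac{n(n+2)}{4}u_V^{4/(n-2)}\eta$ for $\eta=|x|^{-\frac{n-2}{2}+\mu_1}\phi_1$ and the convexity inequality showing $u_V-c\eta$ is a supersolution are precisely the paper's computation. The trouble is in your last paragraph. The boundary portion $\partial V\cap\overline{B_1(0)}$ together with the vertex is not actually an obstacle, and your proposed fix for it does not work. For the maximum principle applied to $\psi=\underline u-(u_V-c\eta)$ you only need $\limsup_{x\to p}\psi\le 0$ at each boundary point $p$, not a quantitative lower bound on the gap $u_V-\underline u$. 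Since $\eta$ tends to $0$ both on $\partial V$ (where $\phi_1$ vanishes, by Lemma \ref{lemma-phi-estimate}) and at the origin (because $\mu_1>\frac{n-2}{2}$), it suffices to know $\underline u<u_V$ pointwise in $V$: this gives $\limsup_{x\to p}(\underline u-u_V)\le 0$ and hence $\limsup_{x\to p}\psi\le 0$, with no rate information whatsoever. That strict comparison is the first step of the paper's proof: by Lemma \ref{lemma-supersution}, $\underline u\le u_V+\big(\frac{2r}{r^{2}-|x|^{2}}\big)^{\frac{n-2}{2}}$ in $V\cap B_r(0)$, letting $r\to\infty$ gives $\underline u\le u_V$ in $V$, and the strong maximum principle makes the inequality strict. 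You never establish this comparison, yet you already use it implicitly on $V\cap\partial B_2(0)$ when you assert that $u_V-\underline u$ has a positive lower bound on the whole compact closure of that cap: near $\partial\Sigma$ the blow-up of $u_V$ handles it, but on the interior of the cap you need $u_V>\underline u$ from domain monotonicity.

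The concrete error is your final sentence: you propose to use the upper bound $|u_V-\underline u|\le Cu_V|x|^{\beta}$ "to guarantee that the gap dominates $c\eta$". An upper bound on $u_V-\underline u$ cannot show $u_V-\underline u\ge c\eta$; that would require a lower bound on the gap, which is exactly the conclusion of the lemma, so the step is circular. The translation trick is also problematic here: it relies on the graph representation $x_n=f(x')$ of the boundary used in Lemma \ref{A}, and for a general Lipschitz cone there is no single direction $e$ for which translation by $\varepsilon e$ moves $\overline V$ into $V$ (take $\Sigma=\mathbb S^{n-1}\setminus\overline{B_r(p)}$). Both devices are unnecessary once the strict comparison $\underline u<u_V$ and the vanishing of $\eta$ on $\partial V\cap\overline{B_1(0)}$ and at the vertex are in place.
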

\begin{proof}

By Lemma \ref{lemma-supersution}, $\underline{u}\leq u_{V}+\Big(\frac{2r}{r^{2}-|x|^{2}}\Big)^{\frac{n-2}{2}}$ in $V\cap B_r(0)$.
Let $r\rightarrow \infty$, the strong maximum principle implies
\begin{equation*}
\underline{u}<  u_{V} \quad\text{in }\, V. \end{equation*}
By a direct computation, we have
\begin{equation*}
 \Delta \big(|x|^{-\frac{n-2}{2}+\mu_1}\phi_1\big)=\frac{n(n+2)}{4} u_{V}^{\frac{4}{n-2}}(|x|^{-\frac{n-2}{2}+\mu_1}\phi_1\big)
\end{equation*}
Hence, fixing any $\epsilon>0$ sufficiently small, we have
 \begin{equation*}
 \Delta\big(u_{V}-\epsilon|x|^{-\frac{n-2}{2}+\mu_1}\phi_1)< \frac{n(n-2)}{4}\big(u_{V}-\epsilon|x|^{-\frac{n-2}{2}+\mu_1}\phi_1)^{\frac{n+2}{n-2}},   \quad\text{in }\, V\cap B_{2}(0)
\end{equation*}
Take $c>0$ smaller than $\epsilon$ such that
 \begin{equation*}
\underline{u} < u_{V}-c|x|^{-\frac{n-2}{2}+\mu_1}\phi_1,   \quad\text{on }\, V\cap \partial B_{2}(0).
\end{equation*}
Then by the maximum principle,
 \begin{equation*}
\underline{u} \leq u_{V}-c|x|^{-\frac{n-2}{2}+\mu_1}\phi_1  \quad\text{in }\, V\cap B_{2}(0).
\end{equation*}

\end{proof}

\begin{prop}\label{eigen-esti}
If $\Sigma\subsetneq\mathbb S^{2}$ is a Lipschitz domain, then
$\lambda_1(L_{\Sigma})>\frac{3}{4}.$
\end{prop}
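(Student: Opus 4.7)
The strategy is to exhibit a strictly positive function $h\in C^\infty(\Sigma)$ satisfying $L_\Sigma h=\tfrac{3}{4}h$ in $\Sigma$ and $h(\theta)\to+\infty$ as $\theta\to\partial\Sigma$, and then to compare $h$ with the first Dirichlet eigenfunction. The threshold $\tfrac{3}{4}$ is not arbitrary: for $n=3$ it equals $\tfrac{n(4-n)}{4}$, the angular eigenvalue paired, via separation of variables $u=r^\beta\phi(\theta)$, with the indicial exponent $\beta=-\tfrac{n}{2}$, which is precisely the homogeneity of translations of $u_V$. Concretely, for any $e\in\mathbb{R}^3$ the function $\partial_e u_V$ solves the linearization of \eqref{eq-MainEq} around $u_V$, and a short calculation shows it separates as $\partial_e u_V(x)=r^{-3/2}\tilde\phi_e(\theta)$ with $L_\Sigma\tilde\phi_e=\tfrac{3}{4}\tilde\phi_e$ on $\Sigma$.

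The construction of $h$ is easiest when the cone $V$ over $\Sigma$ is convex (e.g.\ when $\Sigma$ lies in a closed hemisphere). Pick $e$ in the interior of $V$; since $V$ is a convex cone, $V+te\subset V$ for $t>0$, so the domain monotonicity of Loewner--Nirenberg solutions gives $u_V(x+te)\le u_V(x)$ on $V$ and hence $\partial_e u_V\le 0$. The strong maximum principle for the equation linearized at $u_V$ upgrades this to strict negativity, and the interior direction $e$ has a strictly positive component along the inward normal at every point of $\partial V$, so $|\partial_e u_V|\to\infty$ there at the rate $d_\Sigma^{-3/2}$. Setting $h:=-r^{3/2}\partial_e u_V$ produces the desired formal eigenfunction. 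For a general Lipschitz $\Sigma$ with $V$ not convex, a single direction no longer works; one instead constructs $h$ as a Perron-type solution of $L_\Sigma h=\tfrac{3}{4}h$ with prescribed $+\infty$ boundary values, built out of translation Jacobi fields on the convex local pieces of $V$ and controlled by Proposition \ref{eigen-comparision}.

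Given such $h$, let $\phi_1\in H^1_0(\Sigma)$ be the first Dirichlet eigenfunction with eigenvalue $\lambda_1:=\lambda_1(L_\Sigma)$ and set $w:=\phi_1/h$. Because $\phi_1=0$ and $h=+\infty$ on $\partial\Sigma$, $w$ extends continuously to zero on $\partial\Sigma$; since $w>0$ in $\Sigma$, $w$ attains its maximum at some interior point $\theta_0$. A direct computation from $L_\Sigma\phi_1=\lambda_1\phi_1$ and $L_\Sigma h=\tfrac{3}{4}h$ yields
\[
\Delta_\theta w+2\nabla_\theta(\log h)\cdot\nabla_\theta w=\bigl(\tfrac{3}{4}-\lambda_1\bigr)w\qquad\text{in }\Sigma.
\]
Evaluating at $\theta_0$, where $\nabla_\theta w=0$ and $\Delta_\theta w\le 0$, gives $\lambda_1\ge\tfrac{3}{4}$. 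For strict inequality, observe that if $\lambda_1=\tfrac{3}{4}$ the identity reduces to the homogeneous equation $\Delta_\theta w+2\nabla_\theta(\log h)\cdot\nabla_\theta w=0$; the strong maximum principle then forces $w$ to be constant on $\Sigma$, contradicting its vanishing on $\partial\Sigma$ together with its positivity in the interior. Hence $\lambda_1(L_\Sigma)>\tfrac{3}{4}$.

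The main obstacle is the construction of $h$ for general, possibly non-convex $V$: positivity and boundary blow-up of $-\partial_e u_V$ require $e$ to point strictly into $V$ from every boundary point, which is automatic only when $V$ is convex. Engineering a Perron/approximation scheme that simultaneously preserves positivity on $\Sigma$ and the blow-up at $\partial\Sigma$ -- the latter being essential to prevent $w=\phi_1/h$ from being constant in the strict-inequality step -- is the technical heart of the proof.
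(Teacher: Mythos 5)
Your reduction of the eigenvalue bound to the existence of a positive solution $h$ of $L_\Sigma h=\tfrac34 h$ blowing up on $\partial\Sigma$ is sound in principle (this is the Barta/Allegretto--Piepenbrink direction, and your ground-state substitution $w=\phi_1/h$ together with the strong maximum principle correctly yields the strict inequality once $h$ exists), and your computation that $r^{3/2}\partial_e u_V$ is a formal eigenfunction with eigenvalue $\tfrac34$ is correct. But the proof has a genuine gap exactly where you flag it: the construction of $h$ for general Lipschitz $\Sigma$. The existence of a positive solution of $(L_\Sigma-\tfrac34)h=0$ on $\Sigma$ is essentially \emph{equivalent} to $\lambda_1(L_\Sigma)\ge\tfrac34$, so deferring this to an unspecified ``Perron-type solution built out of translation Jacobi fields on convex local pieces'' defers the entire content of the proposition. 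Worse, by Proposition \ref{eigen-comparision} the hardest domains are the largest ones, i.e.\ $\Sigma$ close to all of $\mathbb S^2$ (e.g.\ the complement of a small cap), and for these the cone $V$ is maximally non-convex: no translation direction $e$ gives $\partial_e u_V$ of one sign, and there is no decomposition of $V$ into convex cones over which one could patch such Jacobi fields while keeping both global positivity and boundary blow-up. So the argument as written only covers convex $V$ (where Corollary \ref{345-convex cone} already applies).

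The paper proceeds quite differently, by contradiction. If $\lambda_1(L_\Sigma)\le\tfrac34$, it places a thin \emph{convex polyhedral} cone $V$ inside the complement of $\overline\Sigma$ on the sphere; by Proposition \ref{eigen-comparision} the complement cone $\overline V^c$ then has $\lambda_1<\tfrac34$, hence $\mu_1=\sqrt{\tfrac14+\lambda_1}<1$. Lemma \ref{gap} shows the Loewner--Nirenberg solution in $\mathbb R^3\setminus\overline{V\cap B_1(0)}$ lies below $u_{\overline V^c}\bigl(1-c\,g^{-1}\phi_1|x|^{\mu_1}\bigr)$, whereas Theorem 1.1 of \cite{hanshen2} (applicable because $\overline V^c$ is bounded by finitely many hyperplanes) forces the ratio to be $1+O(|x|)$; since $\mu_1<1$ these are incompatible along a fixed interior ray. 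Thus the paper converts the spectral bound into a statement about the optimal boundary convergence rate in polyhedral domains, which is already established, rather than constructing a global positive eigenfunction. To complete your approach you would need an independent construction of $h$ for arbitrary Lipschitz $\Sigma$, which is not supplied.
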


\begin{proof}
We prove by contradiction.
Suppose $\Sigma\subsetneq\mathbb S^{2}$ is a Lipschitz domain with
$\lambda_1(L_{\Sigma})\leq \frac{3}{4}.$
Let $V\subsetneq\mathbb R^{3}$  be a convex infinite cone enclosed by three linearly independent hyperplanes passing through the origin
and satisfying $V\bigcap\mathbb S^{2} \subsetneqq \mathbb S^{2}\setminus \overline{\Sigma}$. Set $\overline{V}^c=\mathbb R^{3}\setminus \overline{V}$,
$\Sigma_{\overline{V}^c}= \overline{V}^c\bigcap\mathbb S^{2}.$ Then, by Proposition \ref{eigen-comparision}, $\lambda_1(L_{\Sigma_{\overline{V}^c}})<\frac{3}{4}.$

Let $u_{\overline{V}^c}=|x|^{-\frac{1}{2}}g_{\overline{V}^c}$ be the solution
of \eqref{eq-MainEq}-\eqref{eq-MainBoundary} for $\Omega=\overline{V}^c$ and $u$ be the solution
of \eqref{eq-MainEq}-\eqref{eq-MainBoundary} for $\Omega=\mathbb R^{3}\setminus \big(\overline{V\bigcap B_1(0)}\big)$.

Set $\mu_1=\sqrt{1/4+\lambda_1(L_{\Sigma_{\overline{V}^c}}) }<1$. By Lemma \ref{gap}, there exists $c>0$ such that
\begin{equation*}
u \leq u_{\overline{V}^c}-c|x|^{-\frac{1}{2}+\mu_1}\phi_1= u_{\overline{V}^c}[1-c(g_{\overline{V}^c}^{-1}\phi_1)|x|^{\mu_1}]  \quad\text{in }\, \overline{V}^c\cap B_{2}(0),
\end{equation*}
where $\phi_1>0$ is the first eigenfunction of $L_{\Sigma_{\overline{V}^c}}$.

However, the result in \cite{hanshen2} shows  \begin{equation*}
u =u_{\overline{V}^c}[1+O(|x|)]\quad\text{in }\, \overline{V}^c\cap B_{1}(0),
\end{equation*}
which leads to a contradiction!
We point out that although Theorem 1.1 in \cite{hanshen2} is stated in the case of bounded domains, its proof is also applicable to the present case.

\end{proof}

When $n\geq4$, The following proposition indicates that $\inf_{\text{Lipschitz } \Sigma\subsetneq\mathbb S^{n-1}}\lambda_1(L_{\Sigma})=0$.
\begin{prop}\label{eigen-esti2}
Suppose $n\geq4$, $p$ is a point in $\mathbb S^{n-1}$ and $B_{r}(p)$
 is a small ball on $\mathbb S^{n-1}$ with center $p$, radius $r$. Denote $\mathbb S^{n-1}\setminus \overline{B_{r}(p)}$ by $\Sigma_{r}$. Then,
$$\lim_{r\rightarrow0}\lambda_1(L_{\Sigma_{r}})=0.$$
\end{prop}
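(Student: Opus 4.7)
The plan is to construct a family of admissible test functions whose Rayleigh quotients \eqref{1st-eigenvalue} tend to zero. By Proposition \ref{eigen-comparision} and its proof, the solutions $g_r$ of \eqref{eq-LN-spherical-domain-eq}--\eqref{eq-LN-spherical-domain-boundary} on $\Sigma_r$ are monotonically decreasing as $r$ decreases; since $\Sigma_r$ is rotationally symmetric about $p$, uniqueness forces each $g_r$ to depend only on $t:=d_{\mathbb{S}^{n-1}}(\cdot, p)$. Uniform local bounds ($g_r\leq g_{r_0}$ for $r<r_0$) and standard interior elliptic theory yield $g_r\to g_0$ in $C^\infty_{\mathrm{loc}}(\mathbb{S}^{n-1}\setminus\{p\})$, where $g_0\geq 0$ is radial, satisfies \eqref{eq-LN-spherical-domain-eq} on $\mathbb{S}^{n-1}\setminus\{p\}$, and obeys $g_0'(\pi)=0$ by smoothness at the antipode.

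The crux is to show $g_0\equiv 0$ for $n\geq 4$. If not, by the strong maximum principle $g_0>0$, and Corollary \ref{corollary-estimate} together with Lemma \ref{ustar-estimates} applied to the $g_r$'s (with uniform constants, since $\partial\Sigma_r$ has uniform Lipschitz character) and passed to the limit give $c_1 t^{-(n-2)/2}\leq g_0(t)\leq c_2 t^{-(n-2)/2}$ and $|g_0'(t)|\leq C t^{-n/2}$ for small $t$. Writing the radial equation as $(\sin^{n-2}t\,g_0')'=\sin^{n-2}t[\tfrac{(n-2)^2}{4}g_0+\tfrac{n(n-2)}{4}g_0^{(n+2)/(n-2)}]$ and integrating from $\tau$ to $\pi$, the boundary term at $\pi$ vanishes and we obtain
\begin{equation*}
-\sin^{n-2}\tau\, g_0'(\tau)=\int_\tau^\pi\sin^{n-2}t\Big[\tfrac{(n-2)^2}{4}g_0+\tfrac{n(n-2)}{4}g_0^{(n+2)/(n-2)}\Big]\,dt.
\end{equation*}
The left side is bounded by $C\tau^{(n-4)/2}$, which stays bounded as $\tau\to 0^+$ for all $n\geq 4$ (and decays to $0$ when $n\geq 5$). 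On the right, $\sin^{n-2}t\,g_0^{(n+2)/(n-2)}\geq c\,t^{(n-6)/2}$, whose integral diverges to $+\infty$ at $\tau=0$ when $n=4$ and converges to a strictly positive limit when $n\geq 5$; in either case, letting $\tau\to 0^+$ contradicts the left side, so $g_0\equiv 0$.

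Having $g_0\equiv 0$, Dini's theorem upgrades the monotone pointwise convergence to uniform convergence on every compact $K\subset\mathbb{S}^{n-1}\setminus\{p\}$, so $\rho_r^{-2}=g_r^{4/(n-2)}\to 0$ uniformly on $K$. For small $r_0>0$, I take a smooth cutoff $\eta_{r_0}$ equal to $1$ on $\mathbb{S}^{n-1}\setminus B_{2r_0}(p)$, to $0$ on $B_{r_0}(p)$, with $|\nabla_\theta\eta_{r_0}|\leq C/r_0$; for $r<r_0$, $\eta_{r_0}\in H_0^1(\Sigma_r)$. One checks $\int_{\Sigma_r}|\nabla_\theta\eta_{r_0}|^2\,d\theta\leq Cr_0^{n-3}\to 0$ as $r_0\to 0$ (the exponent is positive precisely because $n\geq 4$), $\int_{\Sigma_r}\eta_{r_0}^2\,d\theta\geq c>0$, and for each fixed $r_0$, $\int_{\Sigma_r}\eta_{r_0}^2/\rho_r^2\,d\theta\to 0$ as $r\to 0$ by the uniform convergence of $\rho_r^{-2}$ on $\mathrm{supp}\,\eta_{r_0}$. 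Choosing $r_0$ small and then $r\ll r_0$ makes the Rayleigh quotient arbitrarily small, giving $\lambda_1(L_{\Sigma_r})\to 0$. The main obstacle is the nonexistence step: one has to rule out a positive radial LN-type solution on the punctured sphere for $n\geq 4$, and the borderline case $n=4$ must be handled via the divergence of the nonlinear integral rather than the vanishing boundary-term mechanism that works cleanly when $n\geq 5$.
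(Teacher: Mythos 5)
Your overall strategy (radial symmetry, monotone limit $g_r\downarrow g_0$, showing $g_0\equiv 0$, then the cutoff test function with Rayleigh quotient $O(r_0^{n-3})$) is sound, and the final test-function computation coincides with the paper's. But there is a genuine gap in the crux step, namely the lower bound $c_1t^{-(n-2)/2}\le g_0(t)$. You justify it by applying Lemma \ref{ustar-estimates} and Corollary \ref{corollary-estimate} to the $g_r$'s "with uniform constants, since $\partial\Sigma_r$ has uniform Lipschitz character" and passing to the limit. The constants in \eqref{uwtar-growth} depend on $\|f\|_{C^{0,1}(B_1(0'))}$, i.e., on representing the domain near its boundary as a Lipschitz hypograph over a ball of \emph{fixed} size; the exterior of a ball of radius $r$ admits no such uniform representation as $r\to0$, and in fact the uniform lower bound is simply false here: if $g_r(\theta)\ge C^{-1}d(\theta,\partial\Sigma_r)^{-(n-2)/2}$ held with $C$ independent of $r$, then at a fixed $\theta$ with $d_{\mathbb S^{n-1}}(\theta,p)=1/2$ we would get $g_r(\theta)\ge c>0$ for all $r$, contradicting the very conclusion $g_r\to0$ that you are trying to prove. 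The hypothesis "$g_0\not\equiv 0$" does not by itself force the maximal blow-up rate $t^{-(n-2)/2}$ at the puncture, so as written the contradiction is not established.

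The step is repairable without the lower bound. Your integral identity gives $-\sin^{n-2}\tau\,g_0'(\tau)=\int_\tau^\pi\sin^{n-2}t\,[\cdots]\,dt$, an increasing positive quantity as $\tau\downarrow 0$; if its limit $I$ is finite and positive, integrating once more yields $g_0(\tau)\gtrsim I\,\tau^{-(n-3)}$, which makes the integrand non-integrable at $0$ for every $n\ge4$ (borderline at $n=4$), so in fact $I=\infty$; then $g_0(\tau)\ge M\tau^{-(n-3)}$ for every $M$ and small $\tau$, which contradicts the legitimate \emph{upper} bound $g_0(t)\le 2^{(n-2)/2}t^{-(n-2)/2}$ (obtained from $g_0\le g_r$ and the universal upper bound in \eqref{uwtar-growth}), since $n-3\ge (n-2)/2$ exactly when $n\ge4$. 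Equivalently, one can invoke Brezis--V\'eron removability of isolated singularities for $\Delta_\theta g=c(n)g+C(n)g^{(n+2)/(n-2)}$ on the $(n-1)$-sphere, where the exponent $(n+2)/(n-2)\ge (n-1)/(n-3)$ precisely when $n\ge4$, and then rule out a global positive solution by evaluating at an interior maximum. For comparison, the paper avoids the ODE analysis entirely: it bounds $|x|^{-(n-2)/2}g_r\le u_r$ by the maximum principle, where $u_r$ solves the Loewner--Nirenberg problem in $V_r\cap B_{1/r}(0)$, and quotes the degeneration result of \cite{HanShen3} (the complement converges to a ray, which cannot support boundary blow-up when $n\ge4$) to conclude $u_r\to0$ and hence $g_r\to0$; your route is more self-contained but needs the corrected nonexistence argument above.
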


\begin{proof}
Without loss of generality, we assume $p=(0,...,0,1)$.
Let $V_r$ be the Lipschitz infinite cone in $\mathbb R ^{n}$ over $\Sigma_r$.
Then, $V_r\cap B_{\frac{1}{r}}(0)\rightarrow\mathbb R ^{n}\setminus \gamma$ as $r\rightarrow0$, where $\gamma=\{(0, \cdots, 0, x_n)|\, x_n>0\}\subset\mathbb R^n.$
Let $u_r$ be the solution of \eqref{eq-MainEq}-\eqref{eq-MainBoundary} for $\Omega=V_r\bigcap B_{\frac{1}{r}}(0)$ and $g_r$ be the solution of \eqref{eq-LN-spherical-domain-eq}-\eqref{eq-LN-spherical-domain-boundary} in $\Sigma_r$.

Arguing as Example 5.6 in \cite{HanShen3}, using the inverse transform of the stereographic projection
which lifts $\mathbb{R}^{n}\times \{0\}$ to $\mathbb S^{n}$, and applying Lemma 4.1 in \cite{HanShen3}, we have
\begin{equation*}
u_r\rightarrow 0
\quad\text{in }C_{\mathrm{loc}}( \mathbb R^{n} \setminus\{\gamma\})
\text{ as }r\rightarrow0.\end{equation*}
By the maximum principle, $ |x|^{-\frac{n-2}{2}}g_r \leq u_r$ in $V_r\bigcap B_{\frac{1}{r}}(0)$.
Hence,
\begin{equation*}
g_r\rightarrow 0
\quad\text{in }C_{\mathrm{loc}}( \mathbb S^{n-1} \setminus\{p\})
\text{ as }r\rightarrow0.\end{equation*}
Set $\rho_r=g_r^{-\frac{2}{n-2}}$, we have
\begin{equation*}
\rho_r^{-2}\rightarrow 0
\quad\text{in }C_{\mathrm{loc}}( \mathbb S^{n-1} \setminus\{p\})
\text{ as }r\rightarrow0.\end{equation*}

For any fixed $s>0$ sufficiently small, when $r<s$,
take
\begin{equation*}
\phi_s=
\begin{cases}
0
& \text{if }  d_{\mathbb S^{n-1}}(x,p)\leq s,\\
\frac{d_{\mathbb S^{n-1}}(x,p)-s}{s}
& \text{if }  s<d_{\mathbb S^{n-1}}(x,p)\leq 2s,\\
1
& \text{if } d_{\mathbb S^{n-1}}(x,p)\geq 2s.\\
\end{cases}
\end{equation*}
Substitute $\phi_s$ into the right hand side of \eqref{1st-eigenvalue}, we have
$$\limsup_{r\rightarrow0}\lambda_1(L_{\Sigma_{r}})\leq Cs^{n-3},$$
for some positive constant $C$ depending only on $n$. Then the desired result follows directly.

\end{proof}

\section{Solutions in finite Cones}\label{sec-cone}

In this section, we study the asymptotic behaviors of positive solutions of \eqref{eq-MEq}-\eqref{eq-MBoundary} in domains which are locally Lipschitz cones and prove Theorem \ref{main reslut2}.

First, we will improve the estimates derived in Lemma \ref{A} when $\Omega$ is locally a cone.
 For simplicity, in rest of this paper we will abbreviate $\lambda_1(L_{\Sigma})$ to $\lambda_1$ and let \begin{equation}\mu_1=\sqrt{(\frac{n-2}{2})^2+\lambda_1}.  \end{equation}
 By Proposition \ref{eigen-esti}, we always have
 \begin{align}\label{Kmu1-19}
 \mu_1> \max\{\frac{n-2}{2},1\}.
 \end{align}

\begin{theorem}\label{thm-cone}
 Let $V$ be a Lipschitz infinite cone in $\mathbb R ^{n}$ over a domain $\Sigma\subsetneq\mathbb S^{n-1}$. Suppose $u\in C^{\infty}(V\bigcap B_{1}(0))$ is a positive solution of
 \begin{align}\label{Lu}\begin{split}
   L u=&\frac{n(n-2)}{4}u^\frac{n+2}{n-2}\quad in\quad V\cap B_{1}(0),\\
   u=&+\infty\quad on\quad \partial V\cap B_{1}(0),\end{split}
\end{align}
and $u_{V}=|x|^{-\frac{n-2}{2}}g(\theta)$ is the solution of \eqref{eq-MainEq}-\eqref{eq-MainBoundary} for $V$, where $L$ is the second order elliptic operator given by
\eqref{L} and satisfies \eqref{structurecondition}.
Then, there exists a positive constant $R\leq 1/2$, depending only on $n$, $\Sigma$ and $C_{L}$ in \eqref{structurecondition}, such that,
for any $x\in  V \cap B_{R}(0)$,
\begin{equation}\label{esti-u-cone}
\Big|\frac{u(x)}{u_{V}(x)}-1\Big|\leq
\begin{cases}
C|x|^2
& \text{if } \mu_1>2,\\
C\big(-|x|^2\ln|x|\big)
& \text{if } \mu_1=2,\\
C|x|^{\mu_1}
& \text{if } \mu_1<2,\\
\end{cases}
\end{equation}
where $C$ is a positive constant depending only on $n$, $\Sigma$ and $C_{L}$ in \eqref{structurecondition}.
\end{theorem}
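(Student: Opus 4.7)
The plan is to view $u$ as a perturbation of the reference solution $u_V = |x|^{-\frac{n-2}{2}}g(\theta)$ and control the difference by explicit barriers combined with the maximum principle. Writing $u = u_V + h$, the difference $h$ is driven by (i) the inhomogeneous forcing $(L-\Delta)u_V$, whose size from \eqref{structurecondition} together with the pointwise bounds $|\nabla u_V|\le Cu_V/d$ and $|\nabla^2 u_V|\le Cu_V/d^2$ (with $d\sim r\rho$ via \eqref{eq-estimate-rho-upper-lower}) is of order $u_V/\rho^2$; and (ii) the spectrum of the linearized operator $M := \Delta - \frac{n(n+2)}{4}u_V^{4/(n-2)}$, whose fundamental homogeneous solution is $|x|^{-\frac{n-2}{2}+\mu_1}\phi_1(\theta)$, with $\phi_1$ the first eigenfunction of $L_\Sigma$ from Section \ref{sec-eigenvalue}. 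The trichotomy in \eqref{esti-u-cone} reflects the competition between the $O(|x|^2)$-scale forcing and the $O(|x|^{\mu_1})$-scale eigenmode.

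First, I would apply Lemma \ref{A} together with Remark \ref{rek-sec3} (with $u_V$ playing the role of $u_{**}$, since both solve the same singular Yamabe equation on $V\cap B_1(0)$ with matching boundary blow-up on $\partial V \cap B_1(0)$) to secure a preliminary bound $|u - u_V| \le C u_V |x|^{\alpha_0}$, with $\alpha_0 = 2$ for $n\ge 6$ and $\alpha_0 = \frac{n-2}{2}$ for $n = 3, 4, 5$; this supplies the input on $V\cap\partial B_R$ needed to close the barrier argument below. Next, I would construct super- and sub-solutions of the form
\[
\bar u^\pm = u_V \pm A |x|^{-\frac{n-2}{2}+\sigma}\phi_1(\theta) \pm B\,\eta(|x|)\, u_V,
\]
with $(\sigma,\eta)$ tuned to the three cases: $\sigma$ slightly less than $\mu_1$ and $\eta\equiv 0$ when $\mu_1 < 2$; $A=0$ and $\eta(r)=r^2$ when $\mu_1 > 2$; and $\eta(r) = -r^2\log r$ at the resonance $\mu_1 = 2$. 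The super-solution check rests on the identity
\[
M\bigl(|x|^s\phi_1\bigr) = |x|^{s-2}\bigl[s(s+n-2)-\lambda_1\bigr]\phi_1,
\]
whose bracket is strictly negative for $s < -\frac{n-2}{2}+\mu_1$, together with the computation $M(r^2 u_V) = -n u_V/\rho^2 + 4 u_V$, and Lemma \ref{lemma-phi-estimate} to control the $\theta$-derivatives of $\phi_1$ by powers of $\rho$. These combine to yield the differential inequality $L\bar u^+ \le \tfrac{n(n-2)}{4}(\bar u^+)^{(n+2)/(n-2)}$ in $V\cap B_R$ once $A$, $B$ are large and $R$ small. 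The maximum principle, applied via the translation argument used in the proof of Lemma \ref{A}, then concludes.

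The main technical obstacle is ensuring the barrier inequality holds uniformly up to the singular set $\partial V$, where $(L-\Delta)u_V$ develops a $\rho^{-2}$ singularity and the gradient and Hessian of $u_V$ are unbounded. This is where the derivative bounds of $\phi_1$ from Lemma \ref{lemma-phi-estimate} and the $-n u_V/\rho^2$ term in $M(r^2 u_V)$ become indispensable: they provide the correct sign and size to absorb the singular forcing. The resonant case $\mu_1 = 2$ is the most delicate, since the $r^2 u_V$ correction and the leading eigenmode correction share the same homogeneity and combine only with a logarithm, so the barrier inequality must be verified directly for the log Ansatz. Finally, the strict inequality $\mu_1 > \frac{n-2}{2}$ from \eqref{Kmu1-19} is what makes the eigenmode correction subordinate to $u_V$ near $0$, keeping the linearization justified.
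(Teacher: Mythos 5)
Your overall strategy is the paper's: reduce to $n=3,4,5$ via the preliminary bound from Lemma \ref{A} and Remark \ref{rek-sec3}, then build super/sub-solutions out of $u_V$, the radial correction $r^2u_V$, and the eigenmode $|x|^{-\frac{n-2}{2}+\sigma}\phi_1$, and close with the maximum principle. However, the specific barriers you propose are too lean, and in each case the supersolution inequality fails somewhere in $V\cap B_R$. In Case 1 ($\mu_1>2$) you set $A=0$, so the only correction is $Br^2u_V$, whose image under $M$ is $Bu_V(-n\rho^{-2}+4)$ --- you record this identity yourself but do not confront the $+4Bu_V$ term. Since $\rho$ need not be bounded by $\sqrt{n}/2$ (for $\Sigma$ close to all of $\mathbb S^{n-1}$, $g$ is small and $\rho$ is large in the interior), there is a region where $M(Br^2u_V)\geq 3Bu_V>0$, and enlarging $B$ only makes this worse; nothing is left to absorb either this positive term or the $O(C_Lu_V)$ part of $(L-\Delta)\bar u^+$ away from $\partial\Sigma$. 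This is exactly why the paper's Case 1 barrier carries the extra terms $A_1r^{\frac{6-n}{2}}+A_2r^{\frac{6-n}{2}}\phi_1$: the eigenmode contributes $(4-\mu_1^2)A_2\phi_1<0$, which is bounded away from zero precisely where $\rho$ is large, while the $-\frac{n(n+2)}{4}\rho^{-2}A_1$ and $-nA_0\rho^{-\frac{n+2}{2}}$ terms take over near $\partial\Sigma$. Your resonant case has the same defect (attaching the logarithm to $r^2u_V$ rather than to the $\phi_1$-mode gives $M(-r^2\ln r\,u_V)=u_V[(n\rho^{-2}-4)\ln r-4]$, again positive where $\rho$ is large), whereas the paper's $-A_2\phi_1r^{\frac{6-n}{2}}\ln r$ produces the clean interior absorber $-4A_2r^{-\frac{n-2}{2}}\phi_1$.

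Case 3 ($\mu_1<2$) has the complementary gap plus a loss of sharpness. With $\eta\equiv 0$, your only absorber is $A(\sigma^2-\mu_1^2)|x|^{-\frac{n-2}{2}+\sigma-2}\phi_1$, which vanishes like $\rho^{\nu}$ at $\partial\Sigma$ (Theorem 4.4 of \cite{HanJiangShen}, as used in Lemma \ref{lemma-phi-estimate}), while the forcing $(L-\Delta)u_V$ grows like $r^{-\frac{n-2}{2}}\rho^{-\frac{n+2}{2}}$ there; the barrier inequality therefore fails near the lateral boundary, and the comparison on $V\cap\partial B_{R}$ also fails near the corner because the correction cannot dominate $Bu_V|x|^{\frac{n-2}{2}}$ where $\phi_1\to0$. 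Moreover, taking $\sigma$ strictly below $\mu_1$ can only yield $|u/u_V-1|\leq C|x|^{\sigma}$, which is weaker than the stated rate $C|x|^{\mu_1}$. The paper avoids both problems by using the exact kernel element $r^{\mu_1-\frac{n-2}{2}}\phi_1$ (annihilated by $M$, present only to win the outer-boundary comparison) together with the lower-order absorbers $A_0u_Vr^2+A_1r^{\frac{6-n}{2}}-A_2r^{\frac{6-n}{2}}\phi_1$, which live at scale $r^2<r^{\mu_1}$ and so do not degrade the final estimate. To repair your proof you should, in every case, keep both families of terms: the $\rho^{-2}$-weighted radial corrections for the boundary layer and a $\phi_1$-mode with strictly negative $M$-image for the interior.
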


\begin{proof}

When $n\geq 6,$ the results follow from Lemma \ref{A} since in this case $\mu_1>\frac{n-2}{2}\geq 2$.
So we only need to consider $n=3,4,5.$

Recall $\rho=g^{-\frac{2}{n-2}}$ and $r=|x|$.
Let $\alpha>0$, $A_1$ and $A_2$ be two positive constants. A straightforward calculation yields
\begin{equation}\label{est-1}
\Delta (A_1r^{\alpha}+A_2r^{\alpha}\phi_1)=r^{\alpha-2}\big[\alpha(\alpha+n-2)(A_1+A_2\phi_1)+A_2\big(\frac{n(n+2)}{4}\rho^{-2}\phi_1-\lambda_1\phi_1\big)\big],
 \end{equation}
 and
\begin{equation}\label{est-2}
\Delta (u_{V}r^{\alpha+\frac{n-2}{2}})
=\frac{n(n-2)}{4}u_{V}^{\frac{n+2}{n-2}}r^{\alpha+\frac{n-2}{2}}+(\alpha+\frac{n-2}{2})^2u_{V}r^{\alpha+\frac{n-2}{2}-2},
 \end{equation}
where $\phi_1>0$ with $\|\phi\|_{L^2(\Sigma)}=1$ is the first eigenfunction of $L_{\Sigma}$.

By \eqref{eq-estimate-rho-upper-lower}, (Lemma 3.2, Remark 3.3, Lemma 3.4) in \cite{hanshen2}, we have

\begin{equation}\label{esti-u-A}
u_{V}+r\rho\sum_{i=1}^{n}|\partial_i u_{V}(x)|+(r\rho)^2\sum_{i,j=1}^{n}|\partial_{ij}^2u_{V}(x)|\le A u_{V} \quad \quad\text{in }V.\end{equation}
where $A>1$ is a constant depending only on $n$ and $\Sigma$.

By Lemma \ref{A} and Remark \ref{rek-sec3}, there exists a positive constant $r_1$, such that
\begin{equation}\label{estim-2-B}|u-u_{V}|\leq B u_{V}|x|^{\frac{n-2}{2}}, \quad \quad\text{in }V\cap B_{r_1}(0)\end{equation}
 for some constant $B$ depending only on $n$, $C_L$ and $\Sigma$.
\smallskip

We discuss in three cases.

{\it Case 1.}  $\mu_1>2$.  Choose $\alpha=\frac{6-n}{2}$. Then $\alpha(\alpha+n-2)-\lambda_1=4-\mu_1^2<0$.

Set $u^{+}=u_{V}+A_0u_{V}r^{2}+A_1r^{\frac{6-n}{2}}+A_2r^{\frac{6-n}{2}}\phi_1$, where $A_0$, $A_1$ and $A_2$ are three positive constants to be determined.
Then we have

\begin{align*}
&\Delta u^{+}-\frac{n(n-2)}{4}(u^{+})^{\frac{n+2}{n-2}}\\
=&\Delta u^{+}-\frac{n(n-2)}{4}u_{V}^{\frac{n+2}{n-2}}\bigg[1+A_0r^{2}+\frac{A_1r^{\frac{6-n}{2}}+A_2 r^{\frac{6-n}{2}}\phi_1}{u_{V}}\bigg]^{\frac{n+2}{n-2}}\\
\leq&\Delta u^{+}-\frac{n(n-2)}{4}u_{V}^{\frac{n+2}{n-2}}\big[1+\frac{n+2}{n-2}\big(A_0r^{2}+\frac{A_1r^{\frac{6-n}{2}}+A_2 r^{\frac{6-n}{2}}\phi_1}{u_{V}}\big)\big]\\
=&\Delta (A_1r^{\frac{6-n}{2}}+A_2r^{\frac{6-n}{2}}\phi_1)-\frac{n(n+2)}{4}(r\rho)^{-2}[A_1r^{\frac{6-n}{2}}+A_2r^{\frac{6-n}{2}}\phi_1]\\
&-A_0nu_{V}\rho^{-2}+A_04u_{V}\\
=&r^{-\frac{n-2}{2}}\big[\frac{(6-n)(n+2)}{4}A_1-\frac{n(n+2)}{4}\rho^{-2}A_{1}+(4-\mu_1^2)A_2\phi_1\\
&+A_0\rho^{-\frac{n-2}{2}}(-n\rho^{-2}+4)\big]\\
 \end{align*}
Since $4-\mu_1^2<0$, we can take two constants $k_1, k_2>1$ depending only on $n$ and $\Sigma$ such that
\begin{align*}
k_1\frac{(6-n)(n+2)}{4}-k_1\frac{n(n+2)}{4}\rho^{-2}+k_2(4-\mu_1^2)\phi_1
+\rho^{-\frac{n-2}{2}}(-n\rho^{-2}+4)
\leq  -1-\rho^{-\frac{n+2}{2}}
 \end{align*}
in $\Sigma$. Take $A_0=10(A+B)(C_{L}+1)r_0^{\frac{n-6}{2}}$ for some positive
\begin{equation}\label{r0}r_0\leq \min\{\frac{1}{100(A+B)(C_L+1)},r_1\} \end{equation}to be determined, and $A_1=k_1A_0$, $A_2=k_2A_0$. Then by \eqref{estim-2-B},
\begin{equation}\label{u-mp-bdy1}
u\leq u^+  \quad\text{in }V\cap \partial B_{r_0}(0)
\end{equation}
and
\begin{align}\label{uplus}\begin{split}
&\Delta u^{+}-\frac{n(n-2)}{4}(u^{+})^{\frac{n+2}{n-2}}\\
\leq& -A_0r^{-\frac{n-2}{2}}\big(1+\rho^{-\frac{n+2}{2}}\big) \quad\text{in }V\cap  B_{r_0}(0).\end{split}
 \end{align}

Fix $r_2>0$ such that,
\begin{equation*}
10(A+B)(C_{L}+1)r_2^{\frac{n-2}{2}}[1+\max_{\Sigma}\rho^{\frac{n-2}{2}}(k_1+k_2\max_{\Sigma}\phi_1)]<\frac{1}{1000}.
\end{equation*}
Then, when $r_0\leq \min\{r_1, r_2\}$,
\begin{equation}\label{smallterm}
A_0r^{2}+\frac{A_1r^{\frac{6-n}{2}}+A_2 r^{\frac{6-n}{2}}\phi_1}{u_{V}}<\frac{1}{1000}.
\end{equation}
in $V\bigcap B_{r_0}(0)$.

By Lemma \ref{lemma-phi-estimate}, \eqref{esti-u-A} and the fact $u_{V}= r^{-\frac{n-2}{2}}\rho^{-\frac{n-2}{2}}$, we have
\begin{equation}\label{sec4-1}
A_0u_{V}r^{2}+A_1r^{\frac{6-n}{2}}+A_2r^{\frac{6-n}{2}}\phi_1\leq C_1A_{0} Au_{V}r^2,
\mathrm{\mathrm{\mathrm{\mathrm{}}}}\end{equation}
\begin{align}\label{sec4-2}\begin{split}
\sum_{i=1}^{n}|(A_0u_{V}r^{2}+A_1r^{\frac{6-n}{2}}+A_2r^{\frac{6-n}{2}}\phi_1)_{i}|&\leq  \frac{C_1}{2}A_0r[u_{V}(1+\rho^{-1})+r|\nabla u_{V}(x)|]\\
& \leq C_1A_{0} Au_{V}r(\rho^{-1}+1)
\end{split}\end{align}
and
\begin{align}\label{sec4-3}\begin{split}
&\sum_{i,j=1}^{n}|(A_0u_{V}r^{2}+A_1r^{\frac{6-n}{2}}+A_2r^{\frac{6-n}{2}}\phi_1)_{ij}|\\
\leq &\frac{C_1}{2}A_{0}[u_{V}+r\sum_{i=1}^{n}|\partial_{i} u_{V}(x)|+r^2\sum_{i,j=1}^{n}|\partial_{ij}^2u_{V}(x)|+u_V\rho^{-2}]\\
\leq &C_1A_{0}Au_{V}(\rho^{-2}+1),
\end{split}\end{align}
for some constant $C_1>1$ depending only on $n$ and $\Sigma$.

Fix a $r_3>0$ such that,\begin{equation}\label{sec4-4}10C_1(A+B)(C_{L}+1)r_3^{\frac{n-2}{2}}<\frac{1}{1000}.\end{equation}
Then, when $r_0\leq \min\{\frac{1}{100(A+B)(C_L+1)},r_1, r_2,r_3\}$,
\begin{equation*}
C_1A_{0}r^2<\frac{1}{1000}\quad \quad\quad\text{for }r\in (0,r_0].
\end{equation*}
Combining \eqref{sec4-1}, \eqref{sec4-2} and \eqref{sec4-3}, it holds
\begin{align*}
r^2\sum_{i,j=1}^{n}|\partial_{ij}^2 u^{+}|+r\sum_{i=1}^{n}|\partial_{i} u^{+}|+u^{+}\leq2Au_{V}(\rho^{-2}+1).
 \end{align*}
Then by \eqref{structurecondition},
\begin{align*}
|(L-\Delta)u^{+}|\leq&  C_{L}(r^2\sum_{i,j=1}^{n}|\partial_{ij}^2 u^{+}|+r\sum_{i=1}^{n}|\partial_i u^{+}|+u^{+})\\
\leq& 2C_{L}Au_{V}(\rho^{-2}+1)\\
<& \frac{1}{5}A_0u_{V}(\rho^{-2}+1) \quad\text{in }V\cap  B_{r_0}(0).
 \end{align*}
Hence, by \eqref{uplus},
\begin{align*}
    &Lu^{+}-\frac{n(n-2)}{4}(u^+)^{\frac{n+2}{n-2}}\\
    =&\Delta u^+-\frac{n(n-2)}{4}(u^+)^{\frac{n+2}{n-2}}+(L-\Delta)u^{+}\\
     \leq & -A_0r^{-\frac{n-2}{2}}\big[1+\rho^{-\frac{n+2}{2}}]+| (L-\Delta)u^{+}|\\
    \leq & -A_0r^{-\frac{n-2}{2}}\big[1+\rho^{-\frac{n+2}{2}}]+ \frac{1}{5}A_0 r^{-\frac{n-2}{2}}\rho^{-\frac{n-2}{2}}(\rho^{-2}+1)\\
     < & -\frac{A_0}{2}r^{-\frac{n-2}{2}}(1+\rho^{-\frac{n+2}{2}})\\
    <& 0
\end{align*}
in $V\cap  B_{r_0}(0)$.

 Let $r_4>0$ such that $u_V>2\big(\frac{4C_L}{n(n+2)}\big)^{\frac{n-2}{4}}$ in $V\bigcap B_{r_4}(0)$. Then take $$r_0\leq \min\{\frac{1}{100(A+ B  )(C_L+1)},r_1, r_2,r_3,r_4\}.$$ Arguing as in the proof of Lemma \ref{A},
the maximum principle implies \begin{equation*}u\leq u^{+}=u_{V}\big[1+\big(A_0+A_1\rho^{\frac{n-2}{2}}+A_2\rho^{\frac{n-2}{2}}\phi_1\big)r^2\big] \quad\quad\quad\text{in }V\cap B_{r_0}(0).
\end{equation*}

Set $u^{-}=u_{V}-A_0u_{V}r^{2}-A_1r^{\frac{6-n}{2}}-A_2r^{\frac{6-n}{2}}\phi_1$.
Noting that by \eqref{r0} and \eqref{smallterm}, $\big[\big(A_0+A_1\rho^{\frac{n-2}{2}}+A_2\rho^{\frac{n-2}{2}}\phi_1\big)r^2\big]^2$ is sufficiently small compared to
$\big(A_0+A_1\rho^{\frac{n-2}{2}}+A_2\rho^{\frac{n-2}{2}}\phi_1\big)r^2$.
A similar argument yields
\begin{equation*}u\geq u^{-}=u_{V}\big[1-\big(A_0+A_1\rho^{\frac{n-2}{2}}+A_2\rho^{\frac{n-2}{2}}\phi_1\big)r^2\big] \quad\quad\quad\text{in }V\cap B_{r_0}(0).
\end{equation*}

{\it Case 2.}  $\mu_1=2$.

For any constant $\alpha$, a straightforward calculation yields
\begin{align*}
\Delta (r^{\alpha}\ln r\phi_1)=&r^{\alpha-2}\ln r\big[\alpha(\alpha+n-2)\phi_1+\frac{n(n+2)}{4}\rho^{-2}\phi_1-\lambda_1\phi_1\big]\\
&+(2\alpha+n-2)r^{\alpha-2}\phi_1.
 \end{align*}
Taking $\alpha=\mu_1-\frac{n-2}{2}=\frac{6-n}{2}$, we have
\begin{equation*}
\Delta (\phi_1r^{\frac{6-n}{2}}\ln r)-\frac{n(n+2)}{4}\rho^{-2}\phi_1r^{\frac{2-n}{2}}\ln r=4r^{-\frac{n-2}{2}}\phi_1
\end{equation*}

Set $u^{+}=u_{V}+A_0u_{V}r^{2}+A_1r^{\frac{6-n}{2}}-A_2\phi_1r^{\frac{6-n}{2}}\ln r$, where $A_0$, $A_1$ and $A_2$ are three positive constants to be determined.
Then,

\begin{align*}
&\Delta u^{+}-\frac{n(n-2)}{4}(u^{+})^{\frac{n+2}{n-2}}\\
\leq&\Delta (A_1r^{\frac{6-n}{2}}-A_2\phi_1r^{\frac{6-n}{2}}\ln r)-\frac{n(n+2)}{4}(r\rho)^{-2}[A_1r^{\frac{6-n}{2}}-A_2\phi_1r^{\frac{6-n}{2}}\ln r]\\
&-A_0nu_{V}\rho^{-2}+A_04u_{V}\\
=&r^{-\frac{n-2}{2}}\big[\frac{(6-n)(n+2)}{4}A_1-\frac{n(n+2)}{4}\rho^{-2}A_{1}-4A_2\phi_1\\
&+A_0\rho^{-\frac{n-2}{2}}(-n\rho^{-2}+4)\big]\\
 \end{align*}
First take two constants $k_1, k_2>1$ depending only on $n$ and $\Sigma$ such that
\begin{align*}
k_1\frac{(6-n)(n+2)}{4}-k_1\frac{n(n+2)}{4}\rho^{-2}-4k_2\phi_1
+\rho^{-\frac{n-2}{2}}(-n\rho^{-2}+4)
\leq  -1-\rho^{-\frac{n+2}{2}}
 \end{align*}
As in {\it Case 1}, we can take $A_0=10(A+B)(C_{L}+1)r_0^{\frac{n-6}{2}}$ for some positive $r_0\leq \min\{\frac{1}{100(A+B)(C_L+1)},r_1\}$ small and $A_1=k_1A_0$, $A_2=k_2A_0$. Then the argument following is basically the same as {\it Case 1} and one gets a constant $r_0>0$ small depending only on $n$ and $\Sigma$ such that
 \begin{equation*}u\leq u^{+}=u_{V}\big[1+\big(A_0+A_1\rho^{\frac{n-2}{2}}-A_2\rho^{\frac{n-2}{2}}\phi_1\ln r\big)r^2\big] \quad\quad\quad\text{in }V\cap B_{r_0}(0),
\end{equation*}
and
 \begin{equation*}u\geq u^{-}:=u_{V}\big[1-\big(A_0+A_1\rho^{\frac{n-2}{2}}-A_2\rho^{\frac{n-2}{2}}\phi_1\ln r\big)r^2\big] \quad\quad\quad\text{in }V\cap B_{r_0}(0).
\end{equation*}

{\it Case 3.} $\mu_1<2$.  Then
\begin{equation*}
\Delta (r^{\mu_1-\frac{n-2}{2}}\phi_1)-\frac{n(n+2)}{4}\rho^{-2}r^{\mu_1-\frac{n-2}{2}-2}\phi_1=0.
\end{equation*}

Set $u^{+}=u_{V}+A_0u_{V}r^{2}+A_1r^{\frac{6-n}{2}}+A_2(r^{\mu_1-\frac{n-2}{2}}-r^{\frac{6-n}{2}})\phi_1$, where $A_0$, $A_1$ and $A_2$ are three positive constants to be determined.
Then,

\begin{align*}
&\Delta u^{+}-\frac{n(n-2)}{4}(u^{+})^{\frac{n+2}{n-2}}\\
\leq &\Delta (A_1r^{\frac{6-n}{2}}+A_2r^{\mu_1-\frac{n-2}{2}}\phi_1-A_2r^{\frac{6-n}{2}}\phi_1)\\-&\frac{n(n+2)}{4}(r\rho)^{-2}[A_1r^{\frac{6-n}{2}}+A_2r^{\mu_1-\frac{n-2}{2}}\phi_1-A_2r^{\frac{6-n}{2}}\phi_1]
-A_0nu_{V}\rho^{-2}+A_04u_{V}\\
= &r^{-\frac{n-2}{2}}\big[\frac{(6-n)(n+2)}{4}A_1-\frac{n(n+2)}{4}\rho^{-2}A_{1}-(4-\mu_1^2)A_2\phi_1\\
&+A_0\rho^{-\frac{n-2}{2}}(-n\rho^{-2}+4)\big].\\
 \end{align*}

Noting $4-\mu_1^2>0$, we can take two constants $k_1, k_2>1$ depending only on $n$ and $\Sigma$ such that
\begin{align*}
k_1\frac{(6-n)(n+2)}{4}-k_1\frac{n(n+2)}{4}\rho^{-2}-k_2(4-\mu_1^2)\phi_1
+\rho^{-\frac{n-2}{2}}(-n\rho^{-2}+4)
\leq  -1-\rho^{-\frac{n+2}{2}}
 \end{align*}
in $\Sigma$.
As in {\it Case 1}, we can take $A_0=10(A+B)(C_{L}+1)r_0^{\frac{n-6}{2}}$ for some positive $r_0\leq \min\{\frac{1}{100(A+B)(C_L+1)},r_1\}$ small and $A_1=k_1A_0$, $A_2=k_2A_0$. Then the argument following is basically the same as {\it Case 1} and we have
\begin{equation*}|u-u_{V}|\leq u_{V}\big[A_0r^2+A_1\rho^{\frac{n-2}{2}}r^2+A_2\rho^{\frac{n-2}{2}}\phi_1(r^{\mu_1}-r^{2})\big] \quad\quad\quad\text{in }V\cap B_{r_0}(0).
\end{equation*}
for some constant $r_0>0$ small depending only on $n$ and $\Sigma$.

\end{proof}

\begin{prop}\label{mu-convex cone}
For $n=3,4,5$, if $V\subseteq \mathbb R^n_+$, then $\mu_1>2$.
\end{prop}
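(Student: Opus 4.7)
The plan is to reduce via the monotonicity Proposition \ref{eigen-comparision} to the extremal case $V=\mathbb R^n_+$, corresponding to $\Sigma=\mathbb S^{n-1}_+:=\mathbb S^{n-1}\cap\{x_n>0\}$, and then compute $\mu_1(\mathbb S^{n-1}_+)$ in closed form. Since $V\subseteq\mathbb R^n_+$ implies $\Sigma\subseteq\mathbb S^{n-1}_+$ and the upper hemisphere is a (smooth) Lipschitz domain strictly contained in $\mathbb S^{n-1}$, Proposition \ref{eigen-comparision} gives $\lambda_1(L_\Sigma)\ge\lambda_1(L_{\mathbb S^{n-1}_+})$, hence $\mu_1(\Sigma)\ge\mu_1(\mathbb S^{n-1}_+)$, and it suffices to prove $\mu_1(\mathbb S^{n-1}_+)>2$.

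For the extremal case, the explicit singular Yamabe solution on $\mathbb R^n_+$ is $u_V(x)=x_n^{-(n-2)/2}$. Parametrizing $\mathbb S^{n-1}$ by the polar angle $\theta\in[0,\pi/2)$ measured from $e_n$, so that $x_n=|x|\cos\theta$, we read off $g(\theta)=(\cos\theta)^{-(n-2)/2}$ and $\rho=\cos\theta$, and the potential of $L_{\mathbb S^{n-1}_+}$ becomes $\tfrac{n(n+2)}{4\cos^2\theta}$. Motivated by the elementary identity $\Delta\bigl(x_n^{(n+2)/2}\bigr)=\tfrac{n(n+2)}{4}x_n^{-2}\cdot x_n^{(n+2)/2}$ (which says $x_n^{(n+2)/2}$ solves the linearization of \eqref{eq-MainEq} at $u_V$) together with separation of variables, I will try the axially symmetric ansatz $\phi(\theta)=(\cos\theta)^{(n+2)/2}$.

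Using $\Delta_\theta\phi=\phi''(\theta)+(n-2)\cot\theta\,\phi'(\theta)$ for axially symmetric $\phi$, a direct calculation yields
\begin{equation*}
L_{\mathbb S^{n-1}_+}\phi=-\Delta_\theta\phi+\frac{n(n+2)}{4\cos^2\theta}\phi=\frac{(n+2)(3n-2)}{4}\,\phi.
\end{equation*}
Since $\phi>0$ in $\mathbb S^{n-1}_+$, $\phi=0$ on the equator and $\phi\in H_0^1(\mathbb S^{n-1}_+)\cap C^\infty(\mathbb S^{n-1}_+)$, and since the first Dirichlet eigenfunction of the Schr\"odinger-type operator $L_{\mathbb S^{n-1}_+}$ is the unique positive eigenfunction up to positive scalar (Perron--Frobenius/Krein--Rutman), $\phi$ must be the ground state and $\lambda_1(L_{\mathbb S^{n-1}_+})=(n+2)(3n-2)/4$. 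An arithmetic check then gives
\begin{equation*}
\mu_1^2=\Bigl(\tfrac{n-2}{2}\Bigr)^2+\tfrac{(n+2)(3n-2)}{4}=\tfrac{(n-2)^2+(n+2)(3n-2)}{4}=n^2,
\end{equation*}
so $\mu_1(\mathbb S^{n-1}_+)=n\ge 3>2$ for $n=3,4,5$, which completes the proof via the reduction.

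The only nontrivial step is guessing the correct eigenfunction; once the ansatz $(\cos\theta)^{(n+2)/2}$ is in hand, the verification is purely computational. The ansatz itself is forced by pulling back $x_n^{(n+2)/2}$ from $\mathbb R^n_+$ to the sphere---this is the positive solution of the linearized Yamabe equation at $u_V$ of the form $x_n^\beta$, corresponding to the positive root $\beta=(n+2)/2$ of $\beta(\beta-1)=n(n+2)/4$.
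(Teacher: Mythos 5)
Your proof is correct, and it takes a genuinely different route at the key step. The paper also reduces to $\Sigma=\mathbb S^{n-1}_+$ via Proposition \ref{eigen-comparision}, but then finishes in one line: since $\rho_{\mathbb S^{n-1}_+}=x_n\le 1$, the singular potential satisfies $\frac{n(n+2)}{4\rho^2}\ge\frac{n(n+2)}{4}$, so the Rayleigh quotient \eqref{1st-eigenvalue} immediately gives $\lambda_1(L_{\mathbb S^{n-1}_+})>\frac{n(n+2)}{4}$ and hence $\mu_1^2>\frac{(n-2)^2+n(n+2)}{4}=\frac{n^2-n+2}{2}\ge 4$ for $n\ge 3$. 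You instead exhibit the exact ground state $(\cos\theta)^{(n+2)/2}$ and compute $\lambda_1(L_{\mathbb S^{n-1}_+})=\frac{(n+2)(3n-2)}{4}$, i.e.\ $\mu_1(\mathbb S^{n-1}_+)=n$; your separation-of-variables computation and the arithmetic check out, and the ansatz is indeed forced by the indicial root $\beta=\frac{n+2}{2}$ of the linearized equation on the half-space. The trade-offs: the paper's crude potential bound avoids any spectral identification and is shorter, but for $n=3$ it is borderline ($\mu_1^2>4$ only because $\int|\nabla\phi|^2>0$), whereas your computation gives the sharp value and the stronger conclusion $\mu_1\ge n$ for every cone contained in a half-space. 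The one step you should flesh out is the Perron--Frobenius identification: because the potential blows up like $d_\Sigma^{-2}$ at $\partial\Sigma$, you should note explicitly that $\phi=(\cos\theta)^{(n+2)/2}$ lies in the form domain (it is $C^1$ up to the equator, vanishes there, and $\rho^{-2}\phi^2=(\cos\theta)^n$ is bounded), so that pairing against the positive first eigenfunction $\phi_1$ of \eqref{eq-def-L-Sigma} is legitimate and yields $(\lambda-\lambda_1)\int_\Sigma\phi\phi_1\,d\theta=0$, forcing $\lambda=\lambda_1$. Also note that Proposition \ref{eigen-comparision} as stated requires strict inclusion, so your use of ``$\ge$'' to cover the case $\Sigma=\mathbb S^{n-1}_+$ is the right reading. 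With those remarks, your argument is complete.
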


\begin{proof}
If $\hat{V}:=\mathbb R^n_+$, then $\Sigma=\mathbb S^{n-1}_+$ and $\rho_{\mathbb S^{n-1}_+}(x)=x_n\leq 1$  on $\mathbb S^{n-1}_+$.
 Then by \eqref{1st-eigenvalue}, $\lambda_1(L_{\mathbb S^{n-1}_+})>\frac{n(n+2)}{4}$. Hence, $\mu_1(L_{\mathbb S^{n-1}_+})>2$.  Then we can draw the conclusion by Proposition \ref{eigen-comparision}.

\end{proof}

\begin{corollary}\label{345-convex cone}
For $n=3,4,5$, if $V$ is convex, then $\mu_1>2$.
\end{corollary}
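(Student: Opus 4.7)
The plan is to reduce the corollary to Proposition \ref{mu-convex cone} by observing that any proper convex cone with vertex at the origin sits inside a half-space, and that the quantity $\mu_1$ is rotationally invariant.

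First, I would invoke the supporting hyperplane theorem at the vertex of $V$: since $V$ is a convex cone properly contained in $\mathbb{R}^n$ with vertex at $0$, there exists a closed half-space $H = \{x \in \mathbb{R}^n : \langle x, \nu \rangle \ge 0\}$ for some unit vector $\nu$, such that $V \subseteq H$. Applying an orthogonal rotation $R \in O(n)$ that sends $\nu$ to $e_n$, the cone $V' := R(V)$ is contained in $\overline{\mathbb{R}^n_+}$, and its spherical cross section $\Sigma' = V' \cap \mathbb{S}^{n-1}$ is a Lipschitz subdomain of $\overline{\mathbb{S}^{n-1}_+}$.

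Next, I would note that the eigenvalue $\lambda_1(L_\Sigma)$—and hence $\mu_1 = \sqrt{(\frac{n-2}{2})^2+\lambda_1}$—is invariant under orthogonal transformations of $\mathbb{S}^{n-1}$. Indeed, the Loewner–Nirenberg function $\rho$ on a spherical domain is intrinsically defined via \eqref{eq-LN-spherical-domain-eq}–\eqref{eq-LN-spherical-domain-boundary}, and both $\Delta_\theta$ and the measure $d\theta$ are preserved by isometries of $\mathbb{S}^{n-1}$; thus the Rayleigh quotient in \eqref{1st-eigenvalue} is unchanged under the pullback by $R$. Consequently $\lambda_1(L_{\Sigma}) = \lambda_1(L_{\Sigma'})$ and so $\mu_1(V) = \mu_1(V')$.

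Finally, since $V' \subseteq \mathbb{R}^n_+$, Proposition \ref{mu-convex cone} gives $\mu_1(V') > 2$, which completes the proof.

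The argument is essentially a one-line reduction; the only subtlety to verify carefully is the rotational invariance of $\mu_1$, and the fact that after rotation the resulting $\Sigma'$ is still a Lipschitz domain on $\mathbb{S}^{n-1}$ (which is immediate since rotations are diffeomorphisms). No delicate analytic estimates are required, as all the difficult work has already been done in Propositions \ref{eigen-comparision} and \ref{mu-convex cone}.
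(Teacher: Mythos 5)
Your proposal is correct and follows essentially the same route as the paper: the paper's proof is exactly "by a rotation, we can assume $V\subseteq \mathbb R^n_+$" followed by an appeal to Proposition \ref{mu-convex cone}. You have merely filled in the routine justifications (supporting hyperplane at the vertex, rotational invariance of $\lambda_1$ and hence of $\mu_1$) that the paper leaves implicit.
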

\begin{proof}
By a rotation, we can assume $V\subseteq \mathbb R^n_+$. We can apply Proposition \ref{mu-convex cone} to conclude
the desired result.

\end{proof}

We are ready to prove Theorem \ref{main reslut2} in this paper.

\begin{proof}[Proof of Theorem \ref{main reslut2}]

Under Assumption \ref{assumption-basic},
\begin{align}\label{rdl}
   g_{ij}=\delta_{ij}+O(|x|^2),\quad g^{ij}=\delta_{ij}+O(|x|^2),
\end{align}
and
\begin{align*}
-L_g u:= \Delta_g u-\frac{n-2}{4(n-1)}S_g u=
 \sum_{i,j=1}^{n}\frac{1}{\sqrt{\det g}}\frac{\partial}{\partial x_i}(\sqrt{\det g}g^{ij}\frac{\partial}{\partial x_j} u)-\frac{n-2}{4(n-1)}S_g u.
\end{align*}

In this coordinate system, the conformal Laplacian $-L_g$ satisfies the structure condition \eqref{structurecondition} in $B_2(0)$
for some positive constant $C_{-L_g }$ depending only on $g$.

Therefore, by taking
\begin{equation*}
\alpha=
\begin{cases}
2
& \text{if }  \mu_1>2,\\
 \text{any number in } (1,2)
& \text{if }   \mu_1=2,\\
\mu_1
& \text{if } \mu_1<2,\\
\end{cases}
\end{equation*}
we have Theorem \ref{main reslut2} by Theorem \ref{thm-cone}.

 In particular, by \eqref{Kmu1-19}, if $n\geq6$, then $\mu_1>2$. Hence, $\alpha$ in \eqref{main-estimate2} can be $2$. For $n=3,4,5$, by Corollary \ref{345-convex cone}, we
 can take $\alpha=2$ in \eqref{main-estimate2} when $V_0$ is convex.

\end{proof}

\section{domains locally bounded by $C^2$ hypersurfaces}\label{sec-domains locally bounded by hypersurfaces}

In this section, we study the asymptotic behaviors of positive solutions of \eqref{eq-MEq}-\eqref{eq-MBoundary} in domains which are locally bounded by $k$
linearly independent $C^2$ hypersurfaces and prove Theorem \ref{main reslut}. The $C^{2}$-diffeomorphism $T$ given by \eqref{C2diffeormorphism1} and the lower bound of $\lambda_1(L_{\Sigma})$ derived in Section \ref{sec-cone} play an important role in the proof for $n=3$.

\begin{theorem}\label{C} Let $\Omega\subseteq\mathbb R ^{n}$ and $\partial\Omega \bigcap B_1(0)$ be bounded by $k$ $C^2$ hypersurfaces $S_1, \cdots, S_k$ with the property that the normal vectors
of $S_1, \cdots, S_k$ at $0$ are linearly independent as in definition \ref{def-Domain-lipschitz}. Let $L$ be the operator given by \eqref{L} and satisfy \eqref{structurecondition}. Suppose $ u \in
C^{\infty}(\Omega\bigcap B_{1}(0))$ is a positive solution of
\begin{align}\label{u-ngeq4}\begin{split}
   L u=&\frac{n(n-2)}{4}u^\frac{n+2}{n-2}\quad in\quad \Omega \cap B_{1}(0),\\
   u=&+\infty\quad on\quad \partial \Omega \cap B_{1}(0),\end{split}
\end{align}
and $u_{V_{0}}$ is the solution of \eqref{eq-MainEq}-\eqref{eq-MainBoundary} in the tangent cone $V_{0}$
of  $\Omega$ at $0$. Then there exists a positive constant $R$, depending only on $n$, $V_0$, $C_{L}$ and the $C^2$-seminorms of $S_1,\cdots, S_k$ in $B_{\frac{1}{2}}(0)$ such that
\begin{align}\label{estimate-ge4}
    \Big|\frac{u(x)}{u_{V_{0}}(Tx)}-1\Big|\leq C|x|^{\alpha}, \quad\quad\quad  \text{in } \Omega\cap  B_{R}(0),
\end{align} where $T$ is the $C^{2}$-diffeomorphism given by \eqref{C2diffeormorphism1}, $\alpha=1$ for $n\geq 4$ and $\alpha=\frac{1}{2}$ for $n=3$, and $C$ is some constant depending only on $n$, $V_0$, $C_{L}$ and the $C^2$-seminorms of $S_1,\cdots, S_k$ in $B_{\frac{1}{2}}(0)$.
\end{theorem}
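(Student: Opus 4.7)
The plan is to use the $C^2$-diffeomorphism $T$ from Section \ref{sec-pre} to transfer the sharp cone estimates of Theorem \ref{thm-cone} to the curved setting. Define $\tilde u(x):=u_{V_0}(Tx)$ on $\Omega\cap B_r(0)$. Because $T(\partial\Omega\cap B_r(0))=\partial V_0\cap T(B_r(0))$ and $JT(0)=I$, the function $\tilde u$ blows up on $\partial\Omega\cap B_r(0)$ with the correct boundary profile. Applying the chain-rule estimates \eqref{nablaf}--\eqref{delerr} to $u_{V_0}$ together with its interior control \eqref{esti-u-A} and the structure condition \eqref{structurecondition}, one checks
\begin{equation*}
L\tilde u(x)=\tfrac{n(n-2)}{4}\tilde u(x)^{(n+2)/(n-2)}+E(x),\qquad |E(x)|\le C\,|x|\,\tilde u(x)^{(n+2)/(n-2)}.
\end{equation*}
Thus $\tilde u$ solves the same semilinear PDE up to a relative error of size $|x|$.

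Next I would construct barriers of the form
\begin{equation*}
w^{\pm}=\tilde u\pm A_{0}|x|^{\alpha}\tilde u\pm A_{1}|x|^{\alpha-(n-2)/2}\pm A_{2}|x|^{\mu_{1}-(n-2)/2}\phi_{1}(\theta),
\end{equation*}
with $\phi_1>0$ the first eigenfunction of $L_{\Sigma_0}$ on $\Sigma_0=V_0\cap\mathbb S^{n-1}$ and $\mu_1=\sqrt{((n-2)/2)^2+\lambda_1(L_{\Sigma_0})}$. This mirrors the three-case barrier of Theorem \ref{thm-cone}: the radial piece $A_0|x|^\alpha\tilde u$ produces a gain $\tfrac{n(n+2)}{4}A_0|x|^\alpha\tilde u^{(n+2)/(n-2)}$ from expanding the nonlinearity, $A_1|x|^{\alpha-(n-2)/2}$ absorbs the non-boundary-singular part of the residual, and $A_2|x|^{\mu_1-(n-2)/2}\phi_1$, whose Laplacian cancels exactly against the linearized operator via the identities underlying \eqref{est-1}--\eqref{est-2}, absorbs the angular part which is singular at $\partial V_0$, with the pointwise decay $\phi_1\le C\rho^{\nu}$ of Lemma \ref{lemma-phi-estimate} keeping this correction smaller than $\tilde u$. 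The constants are fixed in the three-step order used in Lemma \ref{A} and Theorem \ref{thm-cone}: first shrink $r_0$ so the relative correction is tiny, then choose $A_1$ large enough to swallow the $L-\Delta$ second-derivative error, and finally choose $A_0,A_2$ large enough to dominate the $T$-distortion and the gradient-type perturbations.

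Once $Lw^{+}\le\tfrac{n(n-2)}{4}(w^{+})^{(n+2)/(n-2)}$ and its reverse for $w^{-}$ are established in $\Omega\cap B_{r_0}(0)$, the comparison $w^{-}\le u\le w^{+}$ is carried out by the translation device already used in Lemma \ref{A}: add the cap $2u_R$ of \eqref{ur}, replace $u$ by the vertical translate $u^{\varepsilon}$, apply the maximum principle in the subregion where $u^{\varepsilon}$ is large enough that $F^{\prime}(\xi_{x})$ dominates the zero-order coefficient of $L$ (so that the resulting inequality for $w^{\pm}-u^{\varepsilon}$ has the correct sign), and then let $\varepsilon\to 0$. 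The exponent $\alpha=1$ is admissible whenever $\mu_{1}-(n-2)/2\ge 1$, which by \eqref{Kmu1-19} is automatic for every $n\ge 4$.

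The main obstacle is the three-dimensional case. For $n=3$ the solution $\tilde u\sim|x|^{-1/2}$ is only mildly singular and Proposition \ref{eigen-esti} yields $\mu_{1}>1$ with no further quantitative improvement, so the angular exponent $\mu_{1}-1/2$ can be arbitrarily close to $1/2$; any attempt to push $\alpha$ past $1/2$ destroys the cancellation coming from the eigenvalue identity in the supersolution inequality. Choosing $\alpha=1/2$ in this case makes $|x|^{\alpha-(n-2)/2}$ a bounded radial corrector and still leaves a definite nonlinear gain, while the $\rho^{\nu}$-decay of $\phi_{1}$ from Lemma \ref{lemma-phi-estimate} is what finally restores the sign needed for the maximum-principle comparison up to the boundary and gives \eqref{estimate-ge4} with $\alpha=1/2$.
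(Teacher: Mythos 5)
Your route is genuinely different from the paper's. The paper proves Theorem \ref{C} in two short steps: Lemma \ref{A} compares $u$ with the \emph{Euclidean} solution $\tilde u$ of \eqref{u-tilde} in the same curved domain (relative error $|x|^{\min\{2,(n-2)/2\}}$), and then the flat-metric result (Theorem 1.1 of \cite{hanshen2}) is cited to compare $\tilde u$ with $u_{V_0}(Tx)$ (relative error $|x|$); the product of the two gives exactly $\alpha=\min\{1,(n-2)/2\}$. You instead propose a single direct barrier comparison between $u$ and $u_{V_0}\circ T$, which is essentially the machinery the paper reserves for the harder Theorem \ref{E}. That is viable in outline, but as written it has a concrete gap in the exponent bookkeeping.

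The gap is the anchoring of the maximum principle on $\Omega\cap\partial B_{r_0}(0)$. With no preliminary comparison between $u$ and $u_{V_0}\circ T$ available, the only way to start the comparison is the cap $2u_R$ of \eqref{ur}, and that cap contributes an \emph{additive} $O(1)$ term in $B_{r_0/2}(0)$, i.e.\ a relative error $O(u_{V_0}^{-1})=O\big((r\rho)^{(n-2)/2}\big)=O(|x|^{(n-2)/2})$ by \eqref{uwtar-growth}. This term never appears in your accounting, yet it is precisely what limits $\alpha$ to $\tfrac12$ when $n=3$ (and makes $n=4$ exactly borderline). Your stated explanation of the $n=3$ restriction --- that $\mu_1-\tfrac12$ can be arbitrarily close to $\tfrac12$ and that pushing $\alpha$ past $\tfrac12$ ``destroys the cancellation from the eigenvalue identity'' --- is incorrect: Proposition \ref{eigen-esti} gives $\lambda_1>\tfrac34$, hence $\mu_1>1$ uniformly over Lipschitz $\Sigma\subsetneq\mathbb S^2$, so the eigenvalue term is never the obstruction; indeed Theorem \ref{E} attains $\alpha=1$ for $n=3$ with exactly this eigenvalue bound, once the boundary comparison on $\partial B_{r_0}(0)$ is anchored by the preliminary estimate \eqref{u-uV-3-1/2} instead of the cap. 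Without identifying the cap term, your barrier computation (driven only by $\mu_1>1$ and the nonlinear gain at order $|x|$) would appear to yield $\alpha=1$ for $n=3$ as well, contradicting the statement you are proving; conversely, once the cap term is inserted, the exponents $\alpha=1$ ($n\ge4$) and $\alpha=\tfrac12$ ($n=3$) come out correctly. So the proof can be completed along your lines, but you must (i) state explicitly how the outer boundary comparison is achieved and (ii) carry the resulting $O(u_{V_0}|x|^{(n-2)/2})$ error through to the conclusion; the eigenfunction corrector $A_2|x|^{\mu_1-(n-2)/2}\phi_1$ is actually harmless but inessential at this level of precision, since $\mu_1>1\ge\alpha$ by \eqref{Kmu1-19}.
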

\begin{proof}
Let $\tilde{u}$ be the positive solution to
\begin{align}\label{u-tilde}\begin{split}
   \Delta\tilde{u}=&\frac{n(n-2)}{4}\tilde{u}^\frac{n+2}{n-2}\quad \text{in}\quad \Omega\cap B_{1}(0),\\
  \tilde{u}=&+\infty\quad \text{on}\quad \partial \big(\Omega\cap B_{1}(0)\big),
\end{split}\end{align}
Then by Lemma \ref{A}, there exists a positive constant $r_1\leq\frac{1}{2}$, such that
\begin{align}\label{u-tildeu}
    |u-\tilde{u}|\leq Cu|x|^{\alpha} \quad\quad\quad  \text{in } \Omega\cap  B_{r_1}(0),
\end{align}
where $C$ is some constant depending only on $n$, $V$, $C_{L}$ and the $C^2$-seminorms of $S_1,\cdots,S_k$ in $B_{\frac{1}{2}}(0)$.

Next, we consider the behavior of $\tilde{u}(x).$
According to Theorem 1.1 in \cite{hanshen2}, there exists a positive constant $r_2\leq\frac{1}{2}$, such that
\begin{align}\label{u-uv}
    |\tilde{u}-u_{V_0}(Tx)|\leq C u_{V_0}(Tx)|x| \quad\quad\quad  \text{in }  \Omega \cap B_{r_2}(0),
\end{align}
where $C$ is some constant depending only on $n$, $V_0$, $C_{L}$ and the $C^2$-seminorms of $S_1,\cdots, S_k$ in $B_{\frac{1}{2}}(0)$.

Take $R=\min\{r_1,r_2\}$, we have \eqref{estimate-ge4}.

\end{proof}

\smallskip
We now improve the estimate \eqref{estimate-ge4} for $n=3$ with the aid of $T$ and $\lambda_1$.
\begin{theorem}\label{E} Let $\Omega\subseteq\mathbb R ^{3}$ and $\partial\Omega \bigcap B_1(0)$ be bounded by $k$ $C^2$ hypersurfaces $S_1, \cdots, S_k$ near $0$  with the property that the normal vectors
of $S_1, \cdots, S_k$ at $0$ are linearly independent as in definition \ref{def-Domain-lipschitz}.
 Let $L$ be given by \eqref{L} and satisfy \eqref{structurecondition}. Suppose that $ u \in
C^{\infty}(\Omega\bigcap B_{1}(0))$ is a positive solution of
\begin{align}\label{u-n-3}\begin{split}
   L u=&\frac{3}{4}u^5\quad   \text{in }\quad \Omega \cap B_{1}(0),\\
   u=&+\infty\quad   \text{on }\quad \partial \Omega \cap B_{1}(0),\end{split}
\end{align}
and $u_{V_{0}}$ is the solution of \eqref{eq-MainEq}-\eqref{eq-MainBoundary} in the tangent cone $V_{0}$
of $\Omega$ at $0$.  Then, there exists a positive constant $R$ depending only on $V_0$, $C_{L}$ and the $C^2$-seminorms of $S_1,\cdots, S_k$ in $B_{\frac{1}{2}}(0)$ such that
\begin{align}\label{estimate-n-3}
    \Big|\frac{u(x)}{u_{V_{0}}(Tx)}-1\Big|\leq C|x|, \quad\quad\quad  \text{in } \Omega\cap  B_{R}(0),
\end{align} where $T$ is the $C^{2}$-diffeomorphism given by \eqref{C2diffeormorphism1}, and $C$ is some constant depending only on $V_0$, $C_{L}$ and the $C^2$-seminorms of $S_1,\cdots, S_k$ in $B_{\frac{1}{2}}(0)$.
\end{theorem}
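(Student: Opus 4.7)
The strategy follows the two-step decomposition from the proof of Theorem~\ref{C}. Let $\tilde u$ be the solution of the Euclidean problem $\Delta\tilde u=(3/4)\tilde u^{5}$ in $\Omega\cap B_1(0)$ with $\tilde u=+\infty$ on the boundary. Theorem~1.1 of \cite{hanshen2} yields $|\tilde u(x)-u_{V_0}(Tx)|\le Cu_{V_0}(Tx)|x|$ in a neighborhood of $0$, so it suffices to upgrade the bound $|u-\tilde u|\le C\tilde u|x|^{1/2}$ from Lemma~\ref{A} to $|u-\tilde u|\le C\tilde u|x|^{\mu_1}$, where $\mu_1=\sqrt{1/4+\lambda_1(L_{\Sigma_0})}>1$ is the eigenvalue quantity attached to the tangent-cone cross section $\Sigma_0=V_0\cap\mathbb S^{2}$. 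The strict inequality $\mu_1>1$ supplied by Proposition~\ref{eigen-esti} is exactly what converts the insufficient $|x|^{1/2}$ exponent into the desired $|x|$.

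The sharper comparison is obtained by replacing the crude Lemma~\ref{A} barrier with the eigenfunction-based barrier from Case~3 of Theorem~\ref{thm-cone}, adapted to the curved setting through the diffeomorphism $T$:
\begin{equation*}
w^{\pm}(x)=\tilde u(x)\pm\Bigl(A_0\,\tilde u(x)\,|x|^{2}+A_1\,|x|^{3/2}+A_2\bigl(|x|^{\mu_1-1/2}-|x|^{3/2}\bigr)\phi_1(\pi_T(x))\Bigr),
\end{equation*}
where $\phi_1$ is the first positive eigenfunction of $L_{\Sigma_0}$ and $\pi_T(x):=Tx/|Tx|\in\Sigma_0$ is well defined near $0$ since $T$ is a $C^{2}$-diffeomorphism with $T(0)=0$. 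Parallel modifications of Cases~1 and~2 of Theorem~\ref{thm-cone} cover the possibilities $\mu_1\ge 2$. Because $\Delta\tilde u=(3/4)\tilde u^{5}$ and $\tilde u$ has the same boundary blow-up rate as the cone solution $u_V$ (Lemma~\ref{ustar-estimates}), $\tilde u$ plays the role taken by $u_V$ in Theorem~\ref{thm-cone}. The constants are fixed in the three-step hierarchy $A_0\ll A_1\ll A_2$ of that proof: $A_0$ absorbs the $L-\Delta$ perturbation (of size $C_L|x|^{2}$ from \eqref{structurecondition}), $A_1$ controls the sub-leading radial terms, and $A_2$ produces the dominant negative contribution $-A_2(4-\mu_1^{2})|x|^{-1/2}\phi_1$ coming from the eigenvalue identity \eqref{eq-def-L-Sigma}.

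To verify $Lw^{+}\le(3/4)(w^{+})^{5}$ and $Lw^{-}\ge(3/4)(w^{-})^{5}$ near $0$, the Theorem~\ref{thm-cone} computation must be carried out on $\Omega$ instead of $V_0$. Derivatives of $\phi_1\circ\pi_T$ are translated back to spherical derivatives of $\phi_1$ through the chain-rule identities \eqref{nablaf}--\eqref{delerr} (which exploit $JT(0)=I$ and $|Tx-x|\le C|x|^{2}$ from \eqref{difxbarx}) together with the derivative bounds \eqref{eq-Derivative} from Lemma~\ref{lemma-phi-estimate}; the boundary asymptotics of $\tilde u$ are matched to those of $u_{V_0}(Tx)$ via the \cite{hanshen2} estimate recalled above. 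Once the sub- and supersolution inequalities hold on $\Omega\cap B_r(0)$, the shift argument $u^{\varepsilon}(x)=u(x',x_n+\varepsilon)$ and the maximum principle as in Lemma~\ref{A} give $w^{-}\le u\le w^{+}$; dividing by $\tilde u\sim(|x|\rho)^{-1/2}$ and using $\phi_1\le C\rho^{\nu}$ from Lemma~\ref{lemma-phi-estimate} yields $|u/\tilde u-1|\le C|x|^{\mu_1}$, and combining this with the \cite{hanshen2} bound delivers \eqref{estimate-n-3}.

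The principal obstacle is keeping the eigenvalue gain strictly dominant after two simultaneous perturbations of the cone computation are introduced: the operator perturbation $L-\Delta$ (which alone would recover only the $|x|^{1/2}$ rate of Lemma~\ref{A}) and the angular perturbation caused by evaluating $\phi_1$ at $\pi_T(x)$ rather than at the intrinsic angular variable of $V_0$. Each contributes error terms that must be absorbed by the gain $-A_2(4-\mu_1^{2})|x|^{-1/2}\phi_1$; because the margin $\mu_1-1>0$ is a priori small and depends on $\Sigma_0$, the constants $A_0,A_1,A_2$ and the radius $R$ must be selected so that the dependence stated in the theorem---on $V_0$, $C_L$, and the $C^{2}$-seminorms of the $S_i$---is preserved throughout the bookkeeping.
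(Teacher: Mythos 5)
Your proposal is sound and rests on exactly the same two pillars as the paper's argument --- the strict eigenvalue bound $\lambda_1(L_{\Sigma})>\tfrac34$ (equivalently $\mu_1>1$) from Proposition \ref{eigen-esti}, and an eigenfunction-corrected barrier transferred to the curved domain through the chain-rule estimates \eqref{nablaf}--\eqref{delerr} for $T$ --- but the architecture is genuinely different. The paper abandons the two-step decomposition of Theorem \ref{C}: it builds the barrier $\overline{u}=u_{V_0}+A_0u_{V_0}r+A_1r^{1/2}+A_2r^{1/2}\phi_1$ entirely on the \emph{cone} solution, with corrections only of relative order $r$ (the eigenfunction term carries exponent $r^{1/2}$, whose coefficient $\tfrac34-\lambda_1<0$ is what makes it a usable supersolution correction at that order), composes the whole barrier with $T$, and compares it directly with $u$ under the operator $L$, using the Theorem \ref{C} rate $|x|^{1/2}$ only as boundary data on $\partial B_{r_0}$. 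You instead keep the intermediate Euclidean solution $\tilde u$ and upgrade Lemma \ref{A} to the (stronger than necessary) rate $|x|^{\mu_1}$ via the Case~3 barrier of Theorem \ref{thm-cone}, then quote \cite{hanshen2} for the second leg. Both routes work; yours buys a sharper intermediate statement ($|u-\tilde u|\le C\tilde u|x|^{\min(\mu_1,2)}$, of independent interest), while the paper's buys a cleaner error analysis: because its barrier is exactly the cone barrier composed with $T$, the linearization is taken against $(u_{V_0}\circ T)^{4}$ and the only errors are the $T$-pullback term $E$ and the $L-\Delta$ perturbation. Your route carries one additional error source that you should make explicit when writing this up: the eigenfunction cancellation needs $\tfrac{15}{4}\tilde u^{4}$ to agree with $\tfrac{15}{4}\bigl(|x|\rho(\pi_T(x))\bigr)^{-2}$, and by \cite{hanshen2} the mismatch is only of relative size $O(|x|)$; acting on the $A_2$ correction it produces a term of order $A_2|x|^{\mu_1-3/2}\rho^{-2}\phi_1$, which is absorbed by the $-\tfrac{15}{4}A_1\rho^{-2}|x|^{-1/2}$ contribution of the $A_1$ term precisely because $\mu_1>1$. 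Since you have identified this absorption mechanism and the correct constant hierarchy, I regard the proposal as a correct alternative proof, modulo the bookkeeping you have already flagged.
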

\begin{proof}

Set $\Sigma=V_0\cap \mathbb{S}^{2}$. For $n=3$, $u_{V_{0}}=|x|^{-\frac{1}{2}}g=r^{-\frac{1}{2}}\rho^{-\frac{1}{2}}$, where $r=|x|$ and $\rho=g^{-2}$. By \eqref{eq-estimate-rho-upper-lower} and (Lemma 3.2, Remark 3.3, Lemma 3.4) in \cite{hanshen2}, we have
 \begin{equation}\label{u-growth-3}A^{-1}\leq (d(x,\partial V_0))^\frac{n-2}{2}u_{V_0}\leq 2^\frac{n-2}{2},\end{equation}
and
\begin{equation}
u_{V_0}+r\rho\sum_{i=1}^{n}|\partial_i u_{V_0}(x)|+(r\rho)^2\sum_{i,j=1}^{n}|\partial_{ij}^2u_{V_0}(x)|\le A u_{V_0} \quad \quad\text{in }V_0,\end{equation}
where $A>1$ is a constant depending only on $\Sigma$.

By Theorem \ref{C}, there exists a constant $r_1\in(0,\frac{1}{2})$ depending on
$V_0$, $C_{L}$ and the $C^2$-seminorms of $S_1,\cdots, S_k$ in $B_{\frac{1}{2}}(0)$, such that
\begin{align}\label{u-uV-3-1/2}
    \Big|\frac{u(x)}{u_{V_{0}}(Tx)}-1\Big|\leq B|x|^{\frac{1}{2}},
\end{align}
In the following, we require $r_1$ small such that $Br_1^{\frac{1}{2}}<\frac{1}{1000}$.

Set $\overline{u}=u_{V_0}+A_0u_{V_0}r+A_1r^{\frac{1}{2}}+A_2r^{\frac{1}{2}}\phi_1$, where $A_0$, $A_1$ and $A_2$ are three positive constants to be determined.

\begin{align*}
&\Delta \overline{u}-\frac{3}{4}\overline{u}^{5}\\
<&\Delta u_{V_0} +A_0\big(\frac{3}{4}u_{V_0}^{5}r+u_{V_0}r^{-1}\big)+r^{-\frac{3}{2}}\big[\frac{3}{4}(A_1+A_2\phi_1)+A_2(\frac{15}{4}\rho^{-2}\phi_1-\lambda_1\phi_1)\big]\\
&-\frac{3}{4}u_{V_0}^{5}\big[1+5\big(A_0r+\frac{A_1r^{\frac{1}{2}}+A_2 r^{\frac{1}{2}}\phi_1}{u_{V_0}}\big)\big]\\
=&r^{-\frac{3}{2}}\big[\frac{3}{4}A_1-\frac{15}{4}\rho^{-2}A_{1}+(\frac{3}{4}-\lambda_1)A_2\phi_1+A_0\rho^{-\frac{1}{2}}(-3\rho^{-2}+1)\big].
\end{align*}
By Proposition \ref{eigen-esti}, $\frac{3}{4}-\lambda_1<0$. Hence, we can take two constants $k_1,k_2\geq1$ such that
\begin{equation*}\frac{3}{4}k_1-\frac{15}{4}\rho^{-2}k_{1}+(\frac{3}{4}-\lambda_1)k_2\phi_1+\rho^{-\frac{1}{2}}(-3\rho^{-2}+1)\leq -\rho^{-\frac{5}{2}}-1 \quad\quad\quad\text{in } \Sigma.
\end{equation*}
Then taking $A_0=10(A+B)(C_{L}+1)r_0^{-\frac{1}{2}}$ for some positive $r_0\leq \min\{\frac{1}{100(A+B)(C_L+1)},r_1\}$ to be determined, and $A_1=k_1A_0$, $A_2=k_2A_0$, we have
Hence
\begin{equation*}
\Delta \overline{u}-\frac{n(n-2)}{4}\overline{u}^{\frac{n+2}{n-2}}\leq -A_0r^{-\frac{3}{2}}(\rho^{-\frac{5}{2}}+1) \quad\text{in }V_0\cap  B_{r_0}(0).
\end{equation*}
As in the proof of Theorem \ref{thm-cone}, we can take $r_2>0$ small such that when $r_0\leq \min\{\frac{1}{100(A+B)(C_L+1)},r_1,r_2\}$, we have
\begin{align}\label{est-u-cone2}
r^2\sum_{i,j=1}^{3}|\partial_{ij}^2  \overline{u}|+r\sum_{i=1}^{3}|\partial_{i}  \overline{u}|+ \overline{u}\leq2Au_{V_0}(\rho^{-2}+1),\quad\quad\quad\text{in } V_0\cap B_{r_0}(0).
 \end{align}
and therefore
\begin{equation}\label{est-u-cone}
L \overline{u}-\frac{3}{4}\overline{u}^{5}\leq -\frac{A_0}{2}r^{-\frac{3}{2}}(\rho^{-\frac{5}{2}}+1)\quad\quad\text{in } V_0\cap B_{r_0}(0).
\end{equation}

Let $T$ be the $C^{2}$-diffeomorphism defined by \eqref{C2diffeormorphism1} in $B_{r_3}(0)$ for some constant
$r_3\in(0,\frac{1}{2})$ depending only on $V_0$ and the $C^2$-norms of $S_1,\cdots, S_k$ in $B_{\frac{1}{2}}(0)$.
Set $u^+(x)=\overline{u}\circ T(x)=\overline{u}(Tx)$.
By \eqref{diffeo-0}, for small $r_4,$
\begin{equation}\label{Tx-x}|Tx-x|\leq \frac{1}{100}|x|,\end{equation}
and
\begin{equation}\label{dist}|\frac{d(x,\partial\Omega)}{d(Tx, \partial V_0)}-1|\leq \frac{1}{100}.\end{equation}
Combining with \eqref{u-uV-3-1/2}, we have
$u<u^+$ on $\Omega\bigcap\partial B_{r_0}(0)$ if $r_0\leq\min\{\frac{1}{100(A+B)(C_L+1)},r_1,r_2,r_3,r_4\}$.

 Now we estimate $Lu^+.$ By \eqref{nablaf} and \eqref{hessf},

\begin{align*}
\big(L u^+(x)-\frac{3}{4}(u^+)^{5}\big)|_{x}=L \overline{u}|_{Tx}+E-\frac{3}{4}\overline{u}^{5}|_{Tx},
\end{align*} where $E$ is the error term. By \eqref{nablaf} and \eqref{hessf}, we have
\begin{align}\label{ee}
    |E|\leq C_{T}(|\nabla u^+||_{Tx}
+|x||\nabla^2(u^+)||_{Tx}).
\end{align}
where $C_T$ is some constant depending only on $V_0$ and the $C^2$-seminorms of $S_1,\cdots,S_k$ in $B_{\frac{1}{2}}(0)$.

By \eqref{est-u-cone},
\begin{align*}
  \big[Lu^+-\frac{3}{4} (u^+)^{5}\big]|x\leq -\frac{A_0}{2}\bigg(r^{-\frac{3}{2}}(\rho^{-\frac{5}{2}}+1)\bigg) \circ T +|E|.
\end{align*}

By \eqref{difxbarx},
$|Tx-x|\leq C_T|x|^2$. Combing with \eqref{u-growth-3}, \eqref{est-u-cone2}, \eqref{Tx-x} and \eqref{dist}, one gets
\begin{align*}
    |E|\leq  AC_1\bigg( r^{-1}u_{V_0}(\rho^{-2}+1)\bigg)\circ T ,
\end{align*} where $C_1$ is some constant depending on $V_0$, $C_{L}$ and the $C^2$-seminorms of $S_1,\cdots, S_k$ in $B_{\frac{1}{2}}(0)$  but independent of $r_0$ small.
Hence, if we further require $r_0$ to be sufficiently small, by the definition of $A_0$,
we have
\begin{align*}
L u^+-\frac{3}{4} (u^+)^{5}  \leq -\frac{A_0}{4}\bigg(r^{-\frac{3}{2}}(\rho^{-\frac{5}{2}}+1)\bigg) \circ T<0
\end{align*} in $\Omega\bigcap B_{r_0}(0)$. Then by the maximum principle, similarly as in the proof of Lemma \ref{A}, $u\leq u^+.$
For the lower bound, we can take $$u^-=\big(u_{V_0}-A_0u_{V_0}r-A_1r^{\frac{1}{2}}-A_2r^{\frac{1}{2}}\phi_1\big)\circ T,$$ as a sub-solution and proceed similarly. Then we have $u^-\leq u\leq u^+$ in $\Omega\cap B_{r_0}(0)$.
\end{proof}

\smallskip
We are ready to prove Theorem \ref{main reslut} in this paper.

\begin{proof}[Proof of Theorem \ref{main reslut}]

Under Assumption \ref{assumption-basic},
\begin{align}\label{rdl}
   g_{ij}=\delta_{ij}+O(|x|^2),\quad g^{ij}=\delta_{ij}+O(|x|^2),
\end{align}
and
\begin{align*}
-L_g u:= \Delta_g u-\frac{n-2}{4(n-1)}S_g u=
 \sum_{i,j=1}^{n}\frac{1}{\sqrt{\det g}}\frac{\partial}{\partial x_i}(\sqrt{\det g}g^{ij}\frac{\partial}{\partial x_j} u)-\frac{n-2}{4(n-1)}S_g u.
\end{align*}

In this coordinate system, the conformal Laplacian $-L_g $ satisfies the structure condition \eqref{structurecondition} in $B_2(0)$ for some positive constant $C_{-L_g }$ depending only on $g$.

Then, by Theorem \ref{C} and Theorem \ref{E}, we have
\begin{align}\label{utildeu}
\Big|\frac{u(x)}{u_{V_{0}}(Tx)}-1\Big|\leq C|x|.
\end{align} Next, we consider the behavior of $u_{V_{0}}(Tx).$

By \eqref{eq-Solution-Cone-d-coordinates-nd2} and \eqref{C2diffeormorphism1}, we have $$u_{V_{0}}(Tx)=f_{V_0}(d_1,\cdots,d_k),$$
where $d_i$ is the signed distance from $x$ to $S_i$ under the Euclidean metric in the normal coordinate system under Assumption \ref{assumption-basic}. Note that under this normal coordinate system, we have $d_{i}=d_{g,i}(1+O(d_{g}(x,0)))$, where $d_{g,i}$  is the signed distance from $x$ to $S_i$ under the  metric $g$, $i=1,...,k$. Then by the anisotropic gradient estimates in Lemma 3.5 in \cite{hanshen2}, which is also applicable to $k<n$ case as remarked after the proof of Lemma 3.5, we have
\begin{align*}
   | f_{V_0}(d_1,\cdots,d_k)-f_{V_0}(d_{g,1}\cdots,d_{g,k})|\leq C f_{V_0}(d_{g,1}\cdots,d_{g,k}) d_{g}(x,0),
 \end{align*}
 where $C$ is some constant depending only on $n$, $V_0$ and $g$.

 Let $T_g$ be the $C^2$ diffeormorphism given by \eqref{C2diffeormorphism}.
 Finally, by the definition of $T_g$ and the relation \eqref{utildeu}, we have
 \begin{align*}
    u(x)=  f_{V_0}(d_{g,1}\cdots,d_{g,k})\big(1+O(d_{g}(x,0))\big).
 \end{align*}

Therefore, \begin{align}
 \Big|\frac{u(x)}{u_{V_{0}}(T_gx)}-1\Big|\leq Cd_{g}(x,0),
\end{align}
where $C$ is some constant depending only on $n$, $V_0$, $g$ and the $C^2$-seminorms of $S_1,\cdots, S_k$ in $B_{\frac{1}{2}}(0)$.

\end{proof}

\end{document}